\documentclass[11pt,reqno, notitlepage]{amsart}
\usepackage{amsmath,amsfonts,amsthm,amssymb}
\usepackage{graphicx}
\usepackage{verbatim}
\usepackage{color}
\usepackage{amscd}
\usepackage{verbatim}

\usepackage[french,english]{babel}

\usepackage[utf8]{inputenc}

\begin{document}
\newcommand{\M}{{\mathcal M}}
\newcommand{\loc}{{\mathrm{loc}}}
\newcommand{\core}{C_0^{\infty}(\Omega)}
\newcommand{\sob}{W^{1,p}(\Omega)}
\newcommand{\sobloc}{W^{1,p}_{\mathrm{loc}}(\Omega)}
\newcommand{\merhav}{{\mathcal D}^{1,p}}
\newcommand{\be}{\begin{equation}}
\newcommand{\ee}{\end{equation}}
\newcommand{\mytion}[1]{\section{#1}\setcounter{equation}{0}}
\newcommand{\laplace}{\Delta}
\newcommand{\pl}{\laplace_p}
\newcommand{\grad}{\nabla}
\newcommand{\pd}{\partial}
\newcommand{\bo}{\pd}
\newcommand{\csub}{\subset \subset}
\newcommand{\sm}{\setminus}
\newcommand{\ssm}{:}
\newcommand{\diver}{\mathrm{div}\,}
\newcommand{\bea}{\begin{eqnarray}}
\newcommand{\eea}{\end{eqnarray}}
\newcommand{\bean}{\begin{eqnarray*}}
\newcommand{\eean}{\end{eqnarray*}}
\newcommand{\thkl}{\rule[-.5mm]{.3mm}{3mm}}
\newcommand{\cw}{\stackrel{\rightharpoonup}{\rightharpoonup}}
\newcommand{\id}{\operatorname{id}}
\newcommand{\supp}{\operatorname{supp}}
\newcommand{\wlim}{\mbox{ w-lim }}
\newcommand{\mymu}{{x_N^{-p_*}}}
\newcommand{\R}{{\mathbb R}}
\newcommand{\N}{{\mathbb N}}
\newcommand{\Z}{{\mathbb Z}}
\newcommand{\Q}{{\mathbb Q}}
\newcommand{\abs}[1]{\lvert#1\rvert}
\newtheorem{theorem}{Theorem}[section]
\newtheorem{corollary}[theorem]{Corollary}
\newtheorem{lemma}[theorem]{Lemma}
\newtheorem{notation}[theorem]{Notation}
\newtheorem{definition}[theorem]{Definition}
\newtheorem{remark}[theorem]{Remark}
\newtheorem{proposition}[theorem]{Proposition}
\newtheorem{assertion}[theorem]{Assertion}
\newtheorem{problem}[theorem]{Problem}
\newtheorem{conjecture}[theorem]{Conjecture}
\newtheorem{question}[theorem]{Question}
\newtheorem{example}[theorem]{Example}
\newtheorem{Thm}[theorem]{Theorem}
\newtheorem{Lem}[theorem]{Lemma}
\newtheorem{Pro}[theorem]{Proposition}
\newtheorem{Def}[theorem]{Definition}
\newtheorem{defi}[theorem]{Definition}
\newtheorem{Exa}[theorem]{Example}
\newtheorem{Exs}[theorem]{Examples}
\newtheorem{Rems}[theorem]{Remarks}
\newtheorem{Rem}[theorem]{Remark}

\newtheorem{Cor}[theorem]{Corollary}
\newtheorem{Conj}[theorem]{Conjecture}
\newtheorem{Prob}[theorem]{Problem}
\newtheorem{Ques}[theorem]{Question}
\newtheorem*{corollary*}{Corollary}
\newtheorem*{theorem*}{Theorem}
\newtheorem{thm}[theorem]{Theorem}
\newtheorem{lem}[theorem]{Lemma}
\newtheorem{prop}[theorem]{Proposition}
\newtheorem{cor}[theorem]{Corollary}
\newtheorem{ex}[theorem]{Example}
\newtheorem{rem}[theorem]{Remark}
\newtheorem{rems}[theorem]{Remarks}
\newtheorem*{thmm}{Theorem}
\newcommand{\Hmm}[1]{\leavevmode{\marginpar{\tiny%
$\hbox to 0mm{\hspace*{-0.5mm}$\leftarrow$\hss}%
\vcenter{\vrule depth 0.1mm height 0.1mm width \the\marginparwidth}%
\hbox to
0mm{\hss$\rightarrow$\hspace*{-0.5mm}}$\\\relax\raggedright #1}}}
\newcommand{\pf}{\noindent \mbox{{\bf Proof}: }}


\renewcommand{\theequation}{\thesection.\arabic{equation}}
\catcode`@=11 \@addtoreset{equation}{section} \catcode`@=12
\newcommand{\Real}{\mathbb{R}}
\newcommand{\real}{\mathbb{R}}
\newcommand{\Nat}{\mathbb{N}}
\newcommand{\ZZ}{\mathbb{Z}}
\newcommand{\CC}{\mathbb{C}}
\newcommand{\Pess}{\opname{Pess}}
\newcommand{\Proof}{\mbox{\noindent {\bf Proof} \hspace{2mm}}}
\newcommand{\mbinom}[2]{\left (\!\!{\renewcommand{\arraystretch}{0.5}
\mbox{$\begin{array}[c]{c}  #1\\ #2  \end{array}$}}\!\! \right )}
\newcommand{\brang}[1]{\langle #1 \rangle}
\newcommand{\vstrut}[1]{\rule{0mm}{#1mm}}
\newcommand{\rec}[1]{\frac{1}{#1}}
\newcommand{\set}[1]{\{#1\}}
\newcommand{\dist}[2]{$\mbox{\rm dist}\,(#1,#2)$}
\newcommand{\opname}[1]{\mbox{\rm #1}\,}
\newcommand{\mb}[1]{\;\mbox{ #1 }\;}
\newcommand{\undersym}[2]
 {{\renewcommand{\arraystretch}{0.5}  \mbox{$\begin{array}[t]{c}
 #1\\ #2  \end{array}$}}}
\newlength{\wex}  \newlength{\hex}
\newcommand{\understack}[3]{%
 \settowidth{\wex}{\mbox{$#3$}} \settoheight{\hex}{\mbox{$#1$}}
 \hspace{\wex}  \raisebox{-1.2\hex}{\makebox[-\wex][c]{$#2$}}
 \makebox[\wex][c]{$#1$}   }%
\newcommand{\smit}[1]{\mbox{\small \it #1}}
\newcommand{\lgit}[1]{\mbox{\large \it #1}}
\newcommand{\scts}[1]{\scriptstyle #1}
\newcommand{\scss}[1]{\scriptscriptstyle #1}
\newcommand{\txts}[1]{\textstyle #1}
\newcommand{\dsps}[1]{\displaystyle #1}
\newcommand{\dx}{\,\mathrm{d}x}
\newcommand{\dy}{\,\mathrm{d}y}
\newcommand{\dz}{\,\mathrm{d}z}
\newcommand{\dm}{\,\mathrm{d}m}
\newcommand{\dt}{\,\mathrm{d}t}
\newcommand{\dr}{\,\mathrm{d}r}
\newcommand{\du}{\,\mathrm{d}u}
\newcommand{\dv}{\,\mathrm{d}v}
\newcommand{\dV}{\,\mathrm{d}V}
\newcommand{\ds}{\,\mathrm{d}s}
\newcommand{\da}{\,\mathrm{d}\alpha}
\newcommand{\db}{\,\mathrm{d}\beta}
\newcommand{\dS}{\,\mathrm{d}S}
\newcommand{\dk}{\,\mathrm{d}k}

\newcommand{\dphi}{\,\mathrm{d}\phi}
\newcommand{\dtau}{\,\mathrm{d}\tau}
\newcommand{\dxi}{\,\mathrm{d}\xi}
\newcommand{\deta}{\,\mathrm{d}\eta}
\newcommand{\dsigma}{\,\mathrm{d}\sigma}
\newcommand{\dtheta}{\,\mathrm{d}\theta}
\newcommand{\dnu}{\,\mathrm{d}\nu}

\def\ga{\alpha}     \def\gb{\beta}       \def\gg{\gamma}
\def\gc{\chi}       \def\gd{\delta}      \def\ge{\epsilon}
\def\gth{\theta}                         \def\vge{\varepsilon}
\def\gf{\phi}       \def\vgf{\varphi}    \def\gh{\eta}
\def\gi{\iota}      \def\gk{\kappa}      \def\gl{\lambda}
\def\gm{\mu}        \def\gn{\nu}         \def\gp{\pi}
\def\vgp{\varpi}    \def\gr{\rho}        \def\vgr{\varrho}
\def\gs{\sigma}     \def\vgs{\varsigma}  \def\gt{\tau}
\def\gu{\upsilon}   \def\gv{\vartheta}   \def\gw{\omega}
\def\gx{\xi}        \def\gy{\psi}        \def\gz{\zeta}
\def\Gg{\Gamma}     \def\Gd{\Delta}      \def\Gf{\Phi}
\def\Gth{\Theta}
\def\Gl{\Lambda}    \def\Gs{\Sigma}      \def\Gp{\Pi}
\def\Gw{\Omega}     \def\Gx{\Xi}         \def\Gy{\Psi}

\renewcommand{\div}{\mathrm{div}}
\newcommand{\red}[1]{{\color{red} #1}}

%


\newcommand{\De} {\Delta}
\newcommand{\la} {\lambda}
\newcommand{\bn}{\mathbb{B}^{2}}
\newcommand{\rn}{\mathbb{R}^{2}}
\newcommand{\bnn}{\mathbb{B}^{N}}
\newcommand{\rnn}{\mathbb{R}^{N}}
\newcommand{\kP}{k_{P}^{M}}
\newcommand{\authorfootnotes}{\renewcommand\thefootnote{\@fnsymbol\c@footnote}}%

\def\e{{\text{e}}}
\def\N{{I\!\!N}}

\numberwithin{equation}{section} \allowdisplaybreaks

\title[equivalence of heat kernels]{On the equivalence of heat kernels of second-order parabolic operators}

\author{Debdip Ganguly}
\address{Debdip Ganguly, Department of Mathematics,  Technion - Israel Institute of Technology, Haifa 32000, Israel}
\email{gdebdip@technion.ac.il}
\author{Yehuda Pinchover}
\address{Yehuda Pinchover,
Department of Mathematics, Technion - Israel Institute of
Technology,   Haifa 32000, Israel}
\email{pincho@technion.ac.il}

\date{}


\begin{abstract}
Let $P$ be a second-order, symmetric, and nonnegative elliptic operator with real coefficients defined on noncompact Riemannian manifold $M$, and let $V$ be a real valued function which belongs to the class of {\em small perturbation potentials} with respect to the heat kernel of $P$ in $M$.  We prove that under some further assumptions (satisfying by a large classes of $P$ and $M$) the positive minimal heat kernels of $P-V$ and of $P$ on $M$ are equivalent. Moreover, the parabolic Martin boundary is stable under such perturbations, and the cones of all nonnegative solutions of the corresponding parabolic equations are affine homeomorphic.

\vspace{.2cm}

\noindent  2000  \! {\em Mathematics  Subject  Classification.}
{Primary 35K08; Secondary 31C35, 35B09, 47D07, 47D08} \\[1mm]
\noindent {\em Keywords.} Heat kernel, Green function, parabolic Martin boundary, positive solutions, small perturbation.
\end{abstract}

\maketitle


 \section{Introduction}\label{sec_int}
Let $M$ be a smooth, noncompact, connected Riemannian manifold of dimension $N$.  Let $P$ be a second-order
elliptic linear operator defined on $M$, and let $V$ be a real valued potential. Denote the cone of all positive solutions of the
equation $Pu=0$ in $M$ by $\mathcal{C}_{P}(M)$. The {\em
generalized principal eigenvalue} of the operator $P$ and a potential $V$  is defined by
$$\gl_0(P,V,M)
:= \sup\{\gl \in \mathbb{R} \; \mid\; \mathcal{C}_{P-\lambda V}(M)\neq
\emptyset\}.$$
We say that $P$ is {\em nonnegative in} $M$ (and denote it by $P\geq 0$) if $\lambda_0:= \lambda_0(P,\mathbf{1},M)\geq 0$, where $\mathbf{1}$ is the constant function on $M$ taking
at any point $x\in M$ the value $1$. Throughout the paper we always assume that
$\gl_0\geq 0$, that is, $P\geq 0$ in $M$.
So, let $P\geq 0$ in $M$, and consider the parabolic operator
\begin{equation}\label{eqL}
  Lu:=\pd_t u+Pu \qquad  (x,t)\in M\times (0,\infty).
\end{equation}
Let $k_P^{M}(x,y,t)$ be the {\em positive minimal (Dirichlet) heat kernel} of the
parabolic operator $L$ on the manifold $M$.  By definition, for a fixed $y\in M$, the function $(x,t) \mapsto k_P^M(x,y,t)$
is the minimal positive solution of the equation
\begin{equation}\label{heat}
 Lu =0  \qquad\mbox{ in } M\times(0,\infty) ,
\end{equation}
subject to  the initial data $\delta_y$, the Dirac distribution at $y \in M$.


Let $g_1,g_2$ be two positive functions defined in a domain $D$. We say that $g_1$ is {\em equivalent} to $g_2$ in $D$ (and use the notation $g_1\asymp g_2$ in
$D$) if there exists a positive constant $C$ such
that
$$C^{-1}g_{2}(x)\leq g_{1}(x) \leq Cg_{2}(x) \qquad \mbox{ for all } x\in D.$$

The main aim of this article is to study the equivalence of two heat kernels associated with two parabolic operators in $M$.  We are motivated by the following conjecture raised in \cite{FKP}.
\begin{conjecture}[cf. \cite{FKP}]\label{conjequival}
Let $P_1$ and $P_2$ be two subcritical elliptic operators either of the form \eqref{P} or \eqref{operator} which are
defined on a Riemannian manifold $M$ such that both  $P_1$ and $P_2$ have same principle part. Assume that $P_1=P_2$ outside a compact set in $M$ and that the generalized principal eigenvalues
$\lambda_{0}(P_1, \mathbf{1}, M),$ $\lambda_{0}(P_2, \mathbf{1}, M)$  of $P_1$ and $P_2$ respectively in $M$ are equal.  Then $k_{P_1}^M \asymp
k_{P_2}^M$ in $M\times M\times (0,\infty)$.
\end{conjecture}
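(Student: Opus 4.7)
The plan is to reduce the conjecture to an application of the paper's main heat-kernel equivalence result. Since $P_1$ and $P_2$ share the same principal part and coincide outside a compact set $K \subset M$, the difference $V := P_1 - P_2$ is a zeroth-order (multiplication) term with $\supp V \subset K$. Thus the conjecture amounts to showing $k_{P_1}^M \asymp k_{P_1 - V}^M$ where $V$ is compactly supported and, by the hypothesis $\gl_0(P_1,\mathbf{1},M) = \gl_0(P_2,\mathbf{1},M)$, the perturbed operator $P_1 - V = P_2$ remains subcritical with the same generalized principal eigenvalue.

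The first step is to verify that $V$ lies in the class of small perturbation potentials of $P_1$ on $M$. For a subcritical operator $P_1$ on a reasonable manifold, a bounded compactly supported potential is automatically a small perturbation --- this classical fact (going back to Murata and Pinchover) follows from the continuity of the Green function $G_{P_1}(x,y)$ off the diagonal combined with its decay behaviour at infinity, which yields
\[
\lim_{R \to \infty} \sup_{x,y \in M \sm B_R} \int_K \frac{G_{P_1}(x,z)\,|V(z)|\,G_{P_1}(z,y)}{G_{P_1}(x,y)}\dz = 0.
\]
The equality of principal eigenvalues is indispensable: without it, the large-$t$ asymptotics of $k_{P_1}^M$ and $k_{P_2}^M$ would differ by an exponential factor $\e^{(\gl_0(P_2) - \gl_0(P_1))t}$, immediately violating equivalence. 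With $V$ identified as a small perturbation and subcriticality preserved, the paper's main theorem on heat-kernel equivalence applies directly to the pair $(P_1, V)$ and yields $k_{P_1}^M \asymp k_{P_1 - V}^M = k_{P_2}^M$ on $M \times M \times (0, \infty)$, along with the bonus consequences on parabolic Martin boundary stability and the homeomorphism of cones of nonnegative solutions.

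The main obstacle is not the reduction step --- essentially a bookkeeping argument --- but rather establishing that the conjecture's bare hypotheses fall within the scope of the paper's ``further assumptions'' on $(P, M)$. One expects these to cover operators with reasonably well-behaved parabolic Martin boundary at infinity and Gaussian-type heat-kernel bounds; the delicate point is the passage from Green-function equivalence (the elliptic counterpart, which is a direct consequence of the small-perturbation property) to two-sided heat-kernel equivalence, which demands control over the full parabolic Martin compactification at $t \to \infty$ and a stable integral representation for nonnegative solutions of $Lu = 0$ under the perturbation. For $P_1$ or $P_2$ whose minimal positive solutions exhibit anomalous parabolic behaviour (for instance, non-uniqueness of the positive $t$-global solution at $\gl_0$), one should not expect heat kernel equivalence in general, and this is presumably where the ``further assumptions'' are needed.
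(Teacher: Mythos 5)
The statement you were asked to prove is a \emph{conjecture}, and the paper does not prove it: it only establishes the conclusion under substantial additional hypotheses (Theorems~\ref{main} and \ref{spmain} require $P$ to be symmetric, the heat kernel to satisfy the doubling condition \eqref{assumption1}, and either $C^\infty$-coefficients with $V\in L^{p}_{\loc}$, $p>N/2$, or a bounded $V$); the general nonsymmetric case is explicitly listed as an open problem in Subsection~\ref{subsec_op}. Your argument amounts to ``reduce to a compactly supported perturbation and invoke the main theorem,'' but the main theorem's hypotheses are not consequences of the conjecture's hypotheses, and the paragraph where you acknowledge this (``this is presumably where the further assumptions are needed'') leaves the essential gap open rather than closing it. In particular, the doubling condition \eqref{assumption1} can genuinely fail for subcritical manifolds with $\gl_0=0$ (see Remark~\ref{rem_Grig}), so no amount of bookkeeping reduces the conjecture to the theorems actually proved.

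Two further points in your reduction are unjustified. First, the conjecture only assumes that $P_1$ and $P_2$ have the same \emph{principal} (second-order) part; their first-order terms may differ, so $V:=P_1-P_2$ need not be a multiplication operator, whereas all of the paper's perturbation results concern zeroth-order perturbations only. Second, you assert that a bounded compactly supported potential is ``automatically a small perturbation'' by appealing to the Green-function ($3G$) criterion. The relevant notion here is smallness with respect to the \emph{heat kernel} (Definition~\ref{def_sp}), equivalently the $3k$-inequality \eqref{3k}, which is a much stronger statement: the domain of integration $\supp V\times(0,t)$ grows with $t$, and proving the $3k$-inequality even for compactly supported potentials is precisely the paper's main technical contribution (Proposition~\ref{mainprop}), relying on the doubling condition, the Davies--Harnack inequality, and Kannai's short-time asymptotics. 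The relationship between Green-function smallness and heat-kernel smallness is itself listed as an open problem. So the proposal does not constitute a proof of the conjecture; at best it sketches how the conjecture would follow in the special settings the paper already covers.
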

An important aspect of Conjecture~\ref{conjequival} is towards the understanding the stability of the large time behaviour of heat kernels, and of the parabolic Martin boundary under perturbations. We also remark that Conjecture~\ref{conjequival} is related to strong ratio limit properties of the quotients of heat
kernels of subcritical and critical operators, and to Davies' Conjecture (see \cite{FKP}).

In the past four decades there has been an extensive research in obtaining optimal sufficient conditions under which two second-order {\em elliptic} operators have equivalent positive Green functions, and the elliptic case is pretty much well understood (see for example \cite{AA,MM1,YP3,YP1,YP2}, and references therein).  On the other hand, in spite of the huge literature dealing with two-sided heat kernel {\em estimates}, the question of the {\em equivalence} of heat kernels is far from being understood. In fact, there are only very few papers dealing with sufficient conditions that guarantee the equivalence of the heat kernels; see \cite{BDS,BS,FKP,LS,MS,Z1,Z2}. Moreover, most of these works study the particular case of a perturbation of the Laplace operator on $\mathbb{R}^N$ by a potential $V$ that is either a {\em signed} potential, or satisfies additional smoothness assumptions.

 Note that the explicit form of the heat kernel of the Laplacian on $\mathbb{R}^N$ is given by the Gauss-Weierstrass heat kernel
\begin{equation}\label{GW}
    k_{-\Delta}^{\mathbb{R}^{N}}(x, y, t) := \left(\frac{1}{4 \pi t} \right)^{\frac{N}{2}} \mathrm{e}^{-\frac{|x - y|^2}{4t}} \qquad x, y \in \mathbb{R}^{N}, \, t>0,
\end{equation}
and this explicit formula plays a crucial role in almost all the aforementioned  papers (except \cite{FKP}). Unfortunately, for  general operators and manifolds such an expression is not available, despite the fact that in many cases the short and large time behaviour of the heat kernel is known.
 However, we should mention the very recently paper by Chen and Hassell \cite{CH}, where it is proved that under natural assumptions, the heat kernel of an asymptotically hyperbolic  Cartan-Hadamard manifold, is equivalent to the heat kernel of the hyperbolic space.

We provide a positive answer to Conjecture~\ref{conjequival} in the case where $P$ is symmetric and satisfies some further assumptions. We prove in Theorem~\ref{main} the equivalence of two heat kernels of two parabolic operators that differ by a compactly supported potential. This result is extended in Theorem~\ref{spmain} to a larger class of potentials known as the class of small perturbations with respect to the given heat kernel (see Definition~\ref{def_sp}). As an application we prove that the parabolic Martin boundary is stable under such perturbations, and the cones of all nonnegative solutions of the corresponding parabolic equations are {\em affine homeomorphic}.

Our study is based on the method used by M.~Murata and Y.~P. in the study of the equivalence of the {\em Green functions} of elliptic operators (see \cite{MM1,YP3,YP1}).
In this approach one should obtain pointwise estimates for the iterated Green kernel, called the $3G$-inequality which implies sharp two-sided estimates for the corresponding Neumann series.
To understand the difficulty in applying this method to the parabolic case, assume for simplicity that $V$ has a compact support in $M$. In contrast to the elliptic case  \cite{MM1,YP3}, where the iterated kernel is given by integrations over a fixed compact set ($\supp V$), in the parabolic case the domain of integration is $\supp[V\times (0,t)]$ which grows as $t\to\infty$. Hence, the parabolic case  requires a new and a different technique in order to prove the so called {\em $3k$-inequality}. We refer to Section~\ref{secpreliminaries} for the definition of the {\em $3k$-inequality.}

The paper is organized as follows. In Section~\ref{sec_setting} we briefly review the theory of positive solutions of elliptic and parabolic equations and state our main results. Section~\ref{secpreliminaries} is devoted to several preparatory lemmas and propositions. In Section~\ref{secproof} we prove the aforementioned Theorem~\ref{main} concerning compactly supported perturbations, while in Section~\ref{smp} we introduce the notion of small perturbations with respect to the  given heat kernel and prove the aforementioned Theorem~\ref{spmain}. Section~\ref{stableMartin} is devoted to the stability of the  Martin boundary under small perturbations.  We conclude our paper in Section~\ref{concludingrem} which is divided into three short subsections. In the first subsection we briefly extend our results to the class of {\em quasi-symmetric} heat kernels,  in the second part we present some examples of manifolds and operators for which our results applies, and finally, a subsection devoted to a short discussion of some open problems ends the paper.
\section{The setting and statements of the main results}\label{sec_setting}
The present section is devoted to the statements of our main theorems. Before going further we must introduce some notations, technical assumptions and definitions.

Let $M$ be a smooth, noncompact, connected manifold of dimension $N$. We consider a second-order elliptic operator $P$ with real coefficients which (in any coordinate system $(U;x_{1},\ldots,x_{N})$) is either of the form
\be \label{P}
Pu=-\sum_{i=1}^N a^{ij}(x)\partial_{i}\partial_{j}u + b(x)\cdot\nabla u+c(x)u,
\end{equation}
or in the divergence form
\begin{equation} \label{operator}
Pu=-\div \left[\big(A(x)\nabla u +  u\tilde{b}(x) \big) \right]  +
 b(x)\cdot\nabla u   +c(x)u.
\end{equation}
We assume that for every $x\in\Gw$ the matrix $A(x):=\big[a^{ij}(x)\big]$ is symmetric and that the real quadratic form
\begin{equation*}
 \xi \cdot A(x) \xi := \sum_{i,j =1}^N \xi_i a^{ij}(x) \xi_j \qquad
 \xi \in \Real ^N
\end{equation*}
is positive definite. Moreover, it is assumed that $P$ is locally uniformly elliptic. Hence, the principal part of the operator $P$ induces a Riemannian metric $\mathfrak{g}$ on $M$. Throughout the paper we consider the Riemannian manifold $(M,\mathfrak{g})$.
 In particular, when $P= -\Delta_{\mathfrak{h}}$, is the Laplace-Beltrami on a given Riemannian Manifold $(M, \mathfrak{h})$, then the induced metric $\mathfrak{g}$ on $M$ coincides with the given metric $\mathfrak{h}.$
We assume that $\dx$ is a given positive measure on $M$, satisfying $\dx=f\,\mathrm{vol}$, where $f$ is a positive function,  and $\mathrm{vol}$ is the Riemannian volume form of $M$
with respect to the metric $\mathfrak{g}$  (which is just the Lebesgue measure in the case of a domain of $\R^N$ and the operator $P = - \Delta,$ Euclidean Laplacian).
Further, the minus divergence is the formal adjoint of the gradient with respect to the measure $\dx$.

Throughout the paper we assume that the coefficients of $P$ are either $C^\infty$-smooth or locally sufficiently regular in $M$ such that the standard parabolic regularity theory holds true.
For example, such sufficient conditions for $P$ of the form \eqref{operator} are: $f$  and $A$ are locally H\"{o}lder continuous,
the vector fields $b$ and $\tilde{b}$ are Borel measurable in $M$ of class
$L^p_{\mathrm{loc}}(M)$, and $c \in L^{p/2}_{\mathrm{loc}}(M)$ for some $p > N$. We denote by $P^\star$ the formal adjoint operator of $P$ on its natural space $L^2(M, \dx)$.

When $P$ is in divergence form (\ref{operator}) and $b = \tilde{b}$, the operator
\be\label{symm_P}
Pu = - \div \left[ \big(A \grad u + u b\big) \right] + b \cdot \grad u + c u,
\ee
is {\em symmetric} in the space $L^2(M, \dx)$. Throughout the paper, we call this setting the {\em symmetric case}.  We note that if $P$ is symmetric and $b$ is smooth enough, then $P$ is in fact a Schr\"odinger-type operator of the form
\be
\nonumber
Pu = - \div \big(A \grad u \big) + \big(c-\div b\big)u.
\ee
Assume that $\lambda_{0} \geq 0$, and let $k_P^{M}(x,y,t)$ be the {\em positive (minimal) heat kernel} of the
parabolic operator $L$ on the manifold $M$.
It can be easily checked that for $\lambda\leq \lambda_0$, the
heat kernel $k_{P-\lambda}^M$ of the operator $P-\lambda$ on $M$ satisfies the identity
\begin{equation}\label{eq_hkp-gl}
k_{P-\lambda}^M(x,y,t)=\mathrm{e}^{\lambda t}k_P^M(x,y,t)\qquad \mbox{ on }
M\times M\times (0,\infty).
\end{equation}
\begin{definition}\label{def_crit}{\em
Suppose that $\gl_0=\lambda_0(P,\mathbf{1},M)\geq 0$, and let $\kP$ be the corresponding heat kernel. We say that the operator~$P$ is \emph{subcritical} (respectively, \emph{critical}) in $M$ if for some $x \not = y$, (and therefore for any
$x \not = y$),  $x,y\in M$, we have
\begin{equation}\label{def.critical}
  \int_0^\infty k_P^{M}(x,y,\tau)\,\mathrm{d}\tau<\infty \quad
  \left(\mbox{respectively, } \int_0^\infty
  k_P^{M}(x,y,\tau)\,\mathrm{d}\tau=\infty\right).
\end{equation}
If $P$ is subcritical in $M$, then
\begin{equation}\label{G_k}
    G_P^{M}(x,y):=\int_0^\infty k_P^{M}(x,y,\tau)\,\mathrm{d}\tau \qquad x,y \in M
\end{equation}
is called the {\em positive minimal Green function}  of the operator $P$ in $M$.
}
\end{definition}
Clearly, $P$ is critical in $M$ if and only if $P^\star$ is critical in $M$. Moreover, it is known that $P$ is critical in $M$ if and only if the equation $Pu=0$ in $M$ admits a unique (up to a multiplicative constant) positive supersolution \cite{MM1,YP3,pinch2}. In this case
the corresponding unique (super)solution of the equation $Pu=0$ in $M$ is called the \emph{(Agmon) ground state}.

Suppose that $P$ is a critical operator in $M$ and let $\phi$ and $\phi^{*}$ be the ground states of $P$ and $P^\star$, respectively. $P$ is said to be \emph{positive-critical} (\emph{null-critical}) in $M$ with respect to the measure $\dx$ if  $\phi^{*} \phi \in L^{1}(M,\!\dx)$ (resp., $\phi^{*} \phi \not\in L^{1}(M,\!\dx))$.

\begin{rem}\label{rem}{\em   We recall some general results concerning the large time behaviour of the heat kernel.

Let $P$ be an elliptic operator either of the form \eqref{P} or \eqref{operator}, and assume that $\gl_0=\gl_0(P,\mathbf{1},M)\geq 0$.
Then
\begin{equation}\label{large-eigen}
-\lim_{t \rightarrow \infty} \frac{\log\kP(x, y, t)}{t}= \lambda_{0}.
\end{equation}
(see \cite[Remark~4]{FKP}, and references therein). Moreover,
$$\lim_{t \rightarrow \infty} \mathrm{e}^{\lambda_{0} t} \kP(x, y, t) = 0\quad  \mbox{locally uniformly in $M\times M$},$$ unless $P-\gl_0$ is positive-critical, and in this case,
\[
\lim_{t \rightarrow \infty} \mathrm{e}^{\lambda_{0}t} \kP(x, y, t) = \frac{\phi(x) \phi^*(y)}{\int_{M} \phi^{*}(z) \phi(z) \dz}
\]
locally uniformly in $M\times M$, where  $\phi$ and $\phi^{*}$ are the ground states of $P - \lambda_{0}$ and $P^{*} - \lambda_{0}$, respectively (see \cite[Theorem~1.2]{YP4}, and references therein).
 }
\end{rem}

 \begin{definition}\label{defequivalent}{\em
Let $P_{i},\,i=1,2$, be two elliptic operators  either of the form \eqref{P} or \eqref{operator}  that are defined on
$M$, and suppose that $\lambda_0(P_i,\mathbf{1},M)\geq 0$ for $i=1,2$. We say that the corresponding heat kernels
$k_{P_1}^M(x,y,t)$ and $k_{P_2}^M(x,y,t)$ are
{\em equivalent} (respectively, {\em semi-equivalent}) if $$k_{P_1}^M \asymp
k_{P_2}^M \qquad   \mbox{ on } M \times M \times (0,\infty)$$
$$ \mbox{(resp., } \;k_{P_1}^M (\cdot,y_0,\cdot)\asymp k_{P_2}^M(\cdot,y_0,\cdot) \mbox{ on }
\;M\times (0,\infty)  \mbox{  for some fixed }\; y_0\in M \mbox{)}.$$

Similarly, we define the equivalence and the semi-equivalence of the Green functions $G_{P_i}^M(x,y)$, where $i=1,2$.}
\end{definition}
\begin{rem}
{\em
It follows that if $k_{P_1}^M \asymp k_{P_2}^M$, then $P_1$ is subcritical in $M$ if and only if $P_2$ is subcritical in $M$, and in this case, \eqref{eq_hkp-gl} and \eqref{G_k} imply that
$G_{P_1-\gl}^M\asymp G_{P_2-\gl}^M$ for any $\gl\leq \gl_0$ with the same equivalence constant. Moreover,
$\lambda_{0}(P_1,\mathbf{1}, M) := \lambda_{0}(P_2,\mathbf{1}, M)$.
 }
\end{rem}
Throughout the paper we consider a perturbation of an elliptic operator $P$  by a real valued potential $V$. We introduce the following one-parameter family of operators
\begin{equation}\label{operator2}
P_{\varepsilon} := P -  \varepsilon V \qquad \vge \in \mathbb{R},
\end{equation}
where $P$ is a given elliptic operator either of the form \eqref{P} or \eqref{operator}  , and $V$ is a given potential satisfying the above regularity assumption.


Now we are in a situation to state the main results of the paper. In fact, we provide a positive answer to Conjecture~\ref{conjequival} under further assumptions.
\begin{thm}\label{main}
Let $(M, \mathfrak{g})$ be a connected and noncompact Riemannian manifold of dimension $N$.  Let $P$ be a symmetric subcritical operator with  $C^\infty$-coefficients, such that the induced Riemannian metric by $P$ is equal to $\mathfrak{g}$. Let $V \in L^{p}_{\loc}(M)$ be a nonzero real valued potential with compact support, where $ p > \frac{N}{2}$.

Assume that for some $x_0\in M$ and $T>0$ there exists $C:=C(T,x_0) > 0$ such that the following doubling condition holds
\begin{equation}\label{assumption1}
k_P^{M}(x_0, x_0, \frac{t}{2}) \leq C k_P^{M}(x_0, x_0, t)\qquad  \mbox{for all }  \ t > T.
\end{equation}
Then
\begin{enumerate}
  \item There exists $\vge_0 >0$ such that $k_{P-\gl}^{M}\asymp k_{P_{\vge}-\gl}^{M}$ for all $|\vge| <\vge_0$ and all $\gl\leq 0$.


  \item Suppose further that $V\geq 0$, then  $k_{P-\gl}^{M}\asymp k_{P_{\vge}-\gl}^{M}$ for all $-\infty <\vge <\vge_0$ and all  $\gl\leq 0$.


  \item Suppose further that $P - V$ is subcritical in $M$ and satisfies \eqref{assumption1}. Then $\kP \asymp k_{P - V}^{M}$.


  \item  Assume that $P$ is a symmetric subcritical operator with locally regular coefficients, and that \eqref{assumption1} is satisfied. Then assertions (1) - (3) hold true (without the $C^\infty$-assumption, and the assumption on the metric) provided $V$ is a bounded measurable potential with compact support.
  \end{enumerate}
\end{thm}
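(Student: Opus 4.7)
I would adapt the Murata--Pinchover approach for elliptic Green functions to the parabolic setting. The central analytic device is a parabolic $3k$-inequality
\[
\int_{0}^{t}\!\!\int_{M} k_{P}^{M}(x,z,t-s)\,|V(z)|\,k_{P}^{M}(z,y,s)\,\mathrm{d}z\,\mathrm{d}s \;\le\; C_{0}\,k_{P}^{M}(x,y,t), \qquad x,y\in M,\ t>0,
\]
for some universal constant $C_0=C_0(P,V)$. Granted this, Duhamel's formula yields the Neumann series $k_{P_\varepsilon}^M=\sum_{n\ge 0}\varepsilon^n k_n$, with $k_0=k_P^M$ and $k_n(x,y,t)=\int_0^t\!\int_M k_P^M(x,z,t-s)\,V(z)\,k_{n-1}(z,y,s)\,\mathrm{d}z\,\mathrm{d}s$. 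An induction via the $3k$-inequality gives $|k_n|\le C_0^n\, k_P^M$, so for $|\varepsilon|<\varepsilon_0:=1/(2C_0)$ the series converges absolutely to a function equivalent to $k_P^M$. This is Assertion~(1) at $\lambda=0$; the case $\lambda\le 0$ follows at once from $k_{P-\lambda}^M(x,y,t)=\mathrm{e}^{\lambda t}k_P^M(x,y,t)$.

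\textbf{The $3k$-inequality.} Let $K:=\supp(V)$. The parabolic Harnack inequality, applied along a Harnack chain of balls connecting $K$ to $x_0$, converts (for $z\in K$) the product $k_{P}^{M}(x,z,t-s)\,k_{P}^{M}(z,y,s)$ into the diagonal-reference product $k_{P}^{M}(x,x_0,t-s+\tau)\,k_{P}^{M}(x_0,y,s+\tau)$, up to a fixed time-shift $\tau=\tau(K)>0$; the shift is absorbed using~\eqref{assumption1}, transferred off-diagonal via the symmetry of $P$, the Cauchy--Schwarz bound $k_{P}^{M}(x,y,r)^{2}\le k_{P}^{M}(x,x,r)k_{P}^{M}(y,y,r)$, and Chapman--Kolmogorov. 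Localising the identity $k_{P}^{M}(x,y,t)=\int_M k_{P}^{M}(x,z,t-s)k_{P}^{M}(z,y,s)\,\mathrm{d}z$ on a fixed ball around $x_0$ and invoking Harnack gives the pointwise bound $k_{P}^{M}(x,x_0,t-s)\,k_{P}^{M}(x_0,y,s)\le C_1\,k_{P}^{M}(x,y,t)$. The decisive step -- and the hardest point -- is to upgrade this pointwise bound to the $L^1$-in-time estimate
\[
\int_{0}^{t} k_P^M(x,x_0,t-s)\,k_P^M(x_0,y,s)\,\mathrm{d}s \;\le\; C_2\,k_P^M(x,y,t),
\]
which I would prove by splitting $s\in(0,t)$ into a middle block $(t/4,3t/4)$, where a \emph{full-doubling} consequence of~\eqref{assumption1} (transferred off-diagonal by the tools above) makes both factors slowly varying in $s$, and tails $(0,t/4)\cup(3t/4,t)$, controlled by subcriticality of $P$ through $\int_0^\infty k_P^M(x_0,y,s)\,\mathrm{d}s=G_P^M(x_0,y)<\infty$. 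The short-time endpoints $s\in(0,\tau_0)\cup(t-\tau_0,t)$ of the $3k$-integral itself are handled using that $V\in L^p_{\mathrm{loc}}$ with $p>N/2$ places $|V|$ in the local Kato class on $K$.

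\textbf{Completing (2)--(4); main obstacle.} For Assertion~(2), the lower bound $k_{P_\varepsilon}^M\ge k_P^M$ when $V\ge 0$ and $\varepsilon\ge 0$ is immediate from Duhamel, and its extension to $\varepsilon\in(-\infty,0)$ proceeds by bootstrapping Part~(1) along a finite partition of any compact interval $[-A,0]$, noting that the $3k$-constant depends locally uniformly on $\varepsilon$ (it is controlled by the heat kernel of $P_\varepsilon$, which is already equivalent to $k_P^M$ in the range where the bootstrap has been carried out). Assertion~(3) is the same continuation argument along $\varepsilon\in[0,1]$: log-convexity of $\varepsilon\mapsto\lambda_0(P-\varepsilon V,\mathbf{1},M)$ combined with subcriticality of $P$ and $P-V$ guarantees subcriticality of $P-\varepsilon V$ for all $\varepsilon\in[0,1]$, and~\eqref{assumption1} for $P_\varepsilon$ is inherited along the path from the equivalence already obtained. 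Assertion~(4) follows by inspection: every ingredient -- parabolic Harnack, Chapman--Kolmogorov, the Cauchy--Schwarz bound, and~\eqref{assumption1} -- remains valid for locally regular $P$ and bounded compactly supported $V$. The principal obstacle, exactly as foreshadowed in the introduction, is the $L^1$-in-time refinement of the $3k$-bound: the parabolic integration domain $K\times(0,t)$ grows with $t$, the naive pointwise analog of the $3G$-inequality is off by a factor of $t$, and absorbing this factor is what forces a genuinely new concentration estimate blending the doubling condition, symmetry, and the parabolic Harnack inequality.
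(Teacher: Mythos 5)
Your overall architecture (parabolic $3k$-inequality $\Rightarrow$ convergent Neumann series $\Rightarrow$ equivalence, then $\mathrm{e}^{\lambda t}$-scaling for $\lambda\le 0$) is the paper's, and your list of ingredients for the $3k$-inequality --- symmetry via Cauchy--Schwarz on the diagonal, the Davies elliptic-type Harnack inequality, the doubling condition \eqref{assumption1}, subcriticality, and $p>N/2$ --- is the right one. But the proposal has two genuine gaps. First, the decisive step is not actually carried out. Your middle block $s\in(t/4,3t/4)$ cannot be closed by ``slow variation from doubling'': if both factors are comparable to $k_P^M(x_0,x_0,t)$ there, the $s$-integral produces $t\,k_P^M(x_0,x_0,t)^2$, and doubling alone does not absorb the factor $t$ --- you concede this yourself and defer to ``a genuinely new concentration estimate'' that you never supply. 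The mechanism that actually works (and is the paper's Step~1 in Proposition~\ref{mainprop}) is to use \emph{only} the device you reserve for the tails, on the whole half-interval: for $s<t/2$ bound $k_P^M(x,z,t-s)$ by $k_P^M(x,x,t-s)^{1/2}k_P^M(z,z,t-s)^{1/2}\le C\,k_P^M(x_0,x_0,t)\le C'k_P^M(x,y,t)$ (monotonicity, Harnack, doubling), and integrate the \emph{other} factor in time to get $\int_0^{t/2}k_P^M(z,y,s)\,\mathrm{d}s\le G_P^M(z,y)$; crucially one must keep the variable $z$ (rather than collapsing it to $x_0$ by a Harnack chain, which fails anyway when $s$ or $t-s$ is small), because the resulting singular factor $G_P^M(z,y)\asymp|z-y|^{2-N}$ is then integrated against $|V(z)|$ and controlled by H\"older with $p>N/2$. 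Your Kato-class treatment of the short-time regime is a legitimate substitute for the paper's Kannai-asymptotics computation, but you also omit the globalization from $x,y$ near $\mathrm{supp}\,V$ to all of $M$ (the paper's Lemma~\ref{maximum}, a maximum-principle argument on exhaustions), which your Harnack-based reductions do not cover at small times.

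Second, your treatment of assertions (2) and (3) by continuation in $\varepsilon$ does not work. At each step of the bootstrap the $3k$-constant for $k_{P_{\varepsilon_j}}^M$ is controlled only through the accumulated equivalence constant $D_j$, which grows at least geometrically, so the admissible step length shrinks geometrically and the bootstrap covers only a bounded interval --- it reaches neither all of $(-\infty,0)$ in (2) nor $\varepsilon=1$ in (3). (Knowing the constant is ``locally uniform on the range already covered'' is circular for exactly this reason.) The tool you are missing is the log-convexity of heat kernels under potential perturbation, $k_\alpha\le k_0^{1-\alpha}k_1^{\alpha}$ (Proposition~\ref{pro_conv}): combined with the maximum-principle monotonicity $k_{P_{\varepsilon_1}}^M\le k_{P_{\varepsilon_2}}^M$ for $\varepsilon_1<\varepsilon_2$, it gives assertion (2) in one stroke for the entire half-line (Lemma~\ref{lem1}), and it bridges the two neighborhoods of $\varepsilon=0$ and $\varepsilon=1$ in assertion (3) (Lemma~\ref{global-eq}, after applying part (1) to the subcritical operator $P-V$, which is why that hypothesis and \eqref{assumption1} for $P-V$ appear in the statement). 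The log-concavity of $\varepsilon\mapsto\lambda_0$ that you invoke only settles subcriticality of the intermediate operators, not the equivalence of their kernels. Assertion (4) and the $\lambda\le 0$ reduction are fine as you describe them.
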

The following theorem extends Theorem~\ref{main} from the class of compactly supported potentials to the class of {\em small perturbations} (see Definition~\ref{def_sp}).
\begin{thm}\label{spmain}
Suppose that the Riemannian manifold $(M, \mathfrak{g})$, the operator $P$, and its kernel $\kP$ satisfy the assumptions of Theorem~\ref{main}. Let $V \in L^{p}_\loc(M)$ be a small perturbation with respect to $\kP$ in $M$, where $p > {N}/{2}$.
\begin{enumerate}
\item Then there exists $\vge_0 >0$ such that $k_{P-\gl}^{M}\asymp k_{P_{\vge}-\gl}^{M}$ for all $|\vge| <\vge_0$ and all $\gl\leq 0$.


\item  Suppose further that $V\geq 0$, then  $k_{P-\gl}^{M}\asymp k_{P_{\vge}-\gl}^{M}$ for all $-\infty <\vge <\vge_0$ and all  $\gl\leq 0$.


\item Suppose further that $P - V$ is subcritical in $M$ and satisfies the doubling condition \eqref{assumption1} (without any sign assumption on $V$), then $\kP \asymp k_{P - V}^{M}.$
\end{enumerate}
Moreover, if $V$ is only a \emph{semismall perturbation}, then $(1)$ and $(2)$ hold true with the semi-equivalence replacing the equivalence assertion.
\end{thm}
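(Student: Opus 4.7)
The strategy is to reduce all four assertions of Theorem~\ref{spmain} to a parabolic $3k$-inequality for small (respectively, semismall) perturbations, and then to mimic the Neumann series argument used for Theorem~\ref{main}. The formal expansion is the Duhamel series
\be
k_{P_{\vge}-\gl}^{M}(x,y,t) = \sum_{n=0}^{\infty} \vge^{n} K_n(x,y,t),
\ee
where $K_0 := k_{P-\gl}^{M}$ and
\be
K_{n+1}(x,y,t) := \int_{0}^{t}\!\!\int_{M} K_n(x,z,t-s)\,V(z)\,k_{P-\gl}^{M}(z,y,s)\dz\ds.
\ee

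First I would prove the existence of a constant $C>0$ (depending on $V$ but not on $\gl\leq 0$) such that
\be \label{plan_3k}
\int_{0}^{t}\!\!\int_{M} k_{P-\gl}^{M}(x,z,t-s)\,|V(z)|\,k_{P-\gl}^{M}(z,y,s)\dz\ds \leq C\, k_{P-\gl}^{M}(x,y,t)
\ee
uniformly for $x,y\in M$ and $t>0$, with $C$ arbitrarily small when the $z$-integration is restricted to the complement of a large compact set. The plan is to split $[0,t]$ into $[0,t/2]$ and $[t/2,t]$, exploit the semigroup identity $k_{P-\gl}^{M}(x,y,t)=\int_M k_{P-\gl}^{M}(x,z,t/2)\,k_{P-\gl}^{M}(z,y,t/2)\dz$ together with the doubling condition~\eqref{assumption1}, and thereby reduce \eqref{plan_3k} to an elliptic $3G$-type bound that is built into the small perturbation definition given in Section~\ref{smp}. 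Iterating \eqref{plan_3k} gives $|K_n|\leq C^{n}k_{P-\gl}^{M}$ by induction, so for $|\vge|<\vge_0:=1/(2C)$ the Duhamel series converges absolutely and is squeezed between $\tfrac{1}{2}k_{P-\gl}^{M}$ and $2\,k_{P-\gl}^{M}$, proving (1). In the semismall case only the one-variable analogue of \eqref{plan_3k} is available (with one of $x,y$ fixed at a distinguished point $y_0$), and the same argument yields the semi-equivalence rather than full equivalence.

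For assertion (2), monotonicity of the heat kernel in the potential gives $k_{P_\vge-\gl}^{M}\leq k_{P-\gl}^{M}$ for all $\vge\leq 0$ directly from the parabolic comparison principle (or Feynman--Kac). The matching lower bound follows by approximating $V$ from below by an increasing sequence $V_n\nearrow V$ of compactly supported nonnegative potentials, applying Theorem~\ref{main}(2) to each $P-\vge V_n$, and passing to the monotone limit using the uniform control provided by the $3k$-inequality of Step~1. For assertion (3), the additional subcriticality and doubling hypotheses on $P-V$ let us swap the roles of $P$ and $P-V$ in the Duhamel expansion; the small perturbation property is preserved under this interchange once equivalence of the two kernels has been obtained for sufficiently small $|\vge|$, after which the Neumann series argument of the previous step closes the two-sided bound. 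The main obstacle is \eqref{plan_3k}: unlike the elliptic case, the space-time convolution runs over $[0,t]$ for arbitrarily large $t$, so uniform control at all time scales is required, and this is precisely where the doubling condition \eqref{assumption1} is indispensable, allowing us to reduce the parabolic iteration to an elliptic one that the small perturbation assumption is designed to handle.
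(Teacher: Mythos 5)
Your high-level framing — reduce to a parabolic $3k$-inequality and then run the Neumann/Duhamel series — matches the paper, but the way you propose to \emph{obtain} the $3k$-inequality has a real gap, and the route you sketch for assertion (2) is also not quite right.

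The central difficulty is that the small perturbation condition \eqref{smallperturbation} gives you control of the iterated kernel only when $x,y$ range over the tail $M_n^*$ \emph{and} the $z$-integration is restricted to $M_n^*$. It says nothing about the compact core $M_n$, where $V$ may be large, and nothing about pairs $(x,y)$ with at least one point inside $M_n$. Your proposal — split $[0,t]$, apply the semigroup identity and doubling, and ``reduce \eqref{plan_3k} to an elliptic $3G$-type bound that is built into the small perturbation definition'' — does not close this hole. (It also mischaracterizes Definition~\ref{def_sp}: the smallness assumption is stated intrinsically in terms of the heat kernel, not a Green function, so there is no elliptic $3G$ bound ``built in.'') The paper's Lemma~\ref{smallper1} bridges the gap in a way your outline never touches: first it invokes Theorem~\ref{main} and Lemma~\ref{lem1} to conclude $\kP \asymp k^M_{P+|V_{n+1}|}$ for the compactly supported truncation $V_{n+1}=\Phi_{n+1}V$, then it uses the \emph{converse} Proposition~\ref{cor_3k} (equivalence for a nonnegative perturbation implies the $3k$-inequality) to control the integral over the compact core, and finally it glues the core estimate to the tail estimate from \eqref{smallperturbation} via the generalized maximum principle on $M_n$ for the various cases of $x,y$ inside/outside $M_n$. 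Without the Theorem~\ref{main}/Proposition~\ref{cor_3k} step and the gluing argument, the $3k$-inequality does not follow from the small perturbation condition alone, and your Neumann series cannot be started.

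For assertion (2) your monotone approximation $V_n\nearrow V$ is unnecessary and does not obviously work: each application of Theorem~\ref{main}(2) gives an equivalence constant depending on $V_n$, and these can blow up as $n\to\infty$, so you cannot ``pass to the monotone limit'' without the very log-convexity estimate you are trying to avoid. The paper instead deduces (2) from (1) by Lemma~\ref{lem1}, whose proof is pure log-convexity (Proposition~\ref{pro_conv}): once $k^M_{P_{\vge_0}}\asymp\kP$ for one $\vge_0>0$ with $V\ge0$, interpolation gives $k^M_{P_\vge}\asymp\kP$ for every $\vge<\vge_0$. For (3) the paper's argument is again more concrete than your ``swap the roles'' heuristic: one first checks that the tail $W_n$ alone gives $\kP\asymp k^M_{P-W_n}$ for $n$ large (because the $3k$-constant for $W_n$ can be made $<1$), then writes $P-V=(P-W_n)-V_n$ with $V_n$ compactly supported and applies Theorem~\ref{main}(3), finishing via Lemma~\ref{global-eq}.
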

\begin{rem}\em{
Assumption \eqref{assumption1} necessarily implies that  $\lambda_{0}(P, \mathbf{1}, M) = 0.$
 Indeed, if  $\lambda_{0}>0,$ then \eqref{large-eigen} implies that $\kP$ decays exponentially as $t\to\infty$, and this contradicts \eqref{assumption1}.

On the other hand, if $P\geq 0$ in $M$, and $k_P^{M}\asymp k_{P_{\vge}}^{M}$ for all $|\vge| <\vge_0$, then  \eqref{eq_hkp-gl} implies that  $k_{P-\gl}^{M}\asymp k_{P_{\vge}-\gl}^{M}$ for all $\gl \leq \gl_0$ and $|\vge| <\vge_0$ (and in particular, $P_\vge-\gl_0$ is subcritical in $M$, see Proposition~\ref{thm-critical} below).
}
\end{rem}
\begin{rem}\em{
If $\gl_0>0$ and $P-\gl_0$ satisfies the assumptions of Theorem~\ref{main} or Theorem~\ref{spmain}, then the conclusions of these theorems hold true for $P-\gl$ for all $\gl\leq \gl_0$ (see e.g. Example~\ref{ex5}).
}
\end{rem}
\begin{rem}\label{rem_Grig}
{\em
The doubling condition \eqref{assumption1} is not very restrictive. Clearly, the positive minimal heat kernels of the Laplacian on $\R^N$ with $N\geq 3$, and  on the upper half-space $\R_+^N$ with $N\geq 1$ satisfy \eqref{assumption1} (see, \cite{SC1,SC}). In Subsection~\ref{subsec_ex} we provide further examples of manifolds $M$ and operators $P$ satisfying \eqref{assumption1}.


 On the other hand, A.~Grigor'yan kindly pointed out to us that for some model subcritical manifolds $M$ with $\gl_0=0$ and with exponential volume growth $V(r)=\exp(r^\ga)$, where $0<\ga<1$,  the heat kernel satisfies the on-diagonal estimates $$k_{-\Gd}^M(x_0,x_0,t)\asymp \exp(-ct^{\ga/(2-\ga)}).$$
So, the doubling condition $\eqref{assumption1}$ is not satisfied (see, Example~5.36 and Theorem~5.42 in \cite{greg0}).
 }
\end{rem}
In the critical case we have the following result.
\begin{proposition}\label{thm-critical}
Assume that $P$ is critical in $M$, and let $V$ be a nonzero potential.

Then for any $\gl \leq 0$ there does not exist any $\varepsilon_{0} > 0$
such that  $k_{P- \lambda}^{M} \asymp k_{P_{\varepsilon} - \lambda}^{M}$   for all   $|\varepsilon|< \varepsilon_{0}$. Moreover, the corresponding heat kernel $\kP$ does not satisfy
the $3k$-inequality \eqref{3k} with respect to any nonzero potential $V$.
 \end{proposition}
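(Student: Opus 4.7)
First, by the identity $k_{Q-\lambda}^M(x,y,t)=e^{\lambda t}k_Q^M(x,y,t)$ from \eqref{eq_hkp-gl}, the factor $e^{\lambda t}$ cancels on both sides of any equivalence $k_{P-\lambda}^M\asymp k_{P_\varepsilon-\lambda}^M$, and also drops out from both sides of the $3k$-inequality in \eqref{3k}; hence it suffices to treat $\lambda=0$. Let $\phi>0$ denote the Agmon ground state of the critical operator $P$, so $P\phi=0$ and $\phi$ is unique up to a positive scalar. A standard consequence of criticality is the ground-state identity
\[
\int_M k_P^M(x,y,t)\phi(y)\,dy = \phi(x)\qquad \text{for all } (x,t)\in M\times(0,\infty),
\]
which follows because both sides are positive solutions of $\partial_t u+Pu=0$ agreeing in the limit $t\to 0^+$, combined with the uniqueness of positive solutions to $Pu=0$ provided by criticality.

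For the failure of the $3k$-inequality, assume for contradiction that
\[
\int_0^t\!\int_M k_P^M(x,z,t-s)|V(z)|k_P^M(z,y,s)\,dz\,ds \;\le\; C\, k_P^M(x,y,t)
\]
holds for some constant $C>0$ and all $(x,y,t)$. Multiplying both sides by $\phi(y)$, integrating in $y\in M$, and applying Fubini together with the ground-state identity yields
\[
\int_M |V(z)|\phi(z)\left(\int_0^t k_P^M(x,z,u)\,du\right)dz \;\le\; C\phi(x).
\]
As $t\to\infty$, the inner time integral grows monotonically to $G_P^M(x,z)=+\infty$ by criticality of $P$; since $V\not\equiv 0$ and $\phi>0$ we have $|V|\phi\not\equiv 0$, so monotone convergence forces the left-hand side to blow up, contradicting the finite bound $C\phi(x)$. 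This proves the second assertion.

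For the first assertion, suppose toward contradiction that $k_P^M\asymp k_{P_\varepsilon}^M$ on $M\times M\times(0,\infty)$ for all $|\varepsilon|<\varepsilon_0$. By the remark after Definition~\ref{defequivalent}, subcriticality is preserved under heat-kernel equivalence, hence $P_\varepsilon$ is critical for every such $\varepsilon$. Multiplying Duhamel's formula
\[
k_{P_\varepsilon}^M(x,y,t)-k_P^M(x,y,t) = \varepsilon\int_0^t\!\int_M k_P^M(x,z,t-s)V(z)k_{P_\varepsilon}^M(z,y,s)\,dz\,ds
\]
by $\phi(y)$ and integrating over $y$, and setting $g_\varepsilon(z,s):=\int_M k_{P_\varepsilon}^M(z,y,s)\phi(y)\,dy$ (so that $g_\varepsilon\asymp\phi$ uniformly by the equivalence and the ground-state identity), one obtains an integral identity whose left-hand side $g_\varepsilon(x,t)-\phi(x)$ is bounded in $t$ by a multiple of $\phi(x)$, while the right-hand side is essentially the divergent object appearing in the previous paragraph, up to signs. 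When $V$ has a definite sign this immediately produces the contradiction; in the general (sign-changing) case the main obstacle is that cancellation between the $V^+$ and $V^-$ contributions may keep the signed integral bounded. To handle it I would exploit the concavity of $\varepsilon\mapsto\lambda_0(P_\varepsilon,\mathbf{1},M)$ in the symmetric case: the hypothesis that $P_\varepsilon$ is critical on the whole interval $(-\varepsilon_0,\varepsilon_0)$ forces $\lambda_0(P_\varepsilon,\mathbf{1},M)\equiv 0$ there, and a Hellmann--Feynman type rigidity for the generalized principal eigenvalue, combined with the uniqueness of the ground state in the critical case, would then rule out $V\not\equiv 0$.
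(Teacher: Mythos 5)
Your reduction to $\gl=0$ is fine, and your direct argument for the failure of the $3k$-inequality is a genuinely different (and more self-contained) route than the paper's, which obtains that failure as a corollary of the first assertion via Theorem~\ref{thm_3k_implies_eq}. However, the key ground-state identity $\int_M k_P^M(x,y,t)\phi(y)\dy=\phi(x)$, while true for critical operators, is not established by the reason you give: uniqueness (up to scalars) of positive solutions of the \emph{elliptic} equation $Pu=0$ does not identify the two sides, because $u(x,t):=\int_M k_P^M(x,y,t)\phi(y)\dy$ is only the \emph{minimal} nonnegative solution of the Cauchy problem with data $\phi$, so a priori one only gets $u\le\phi$ — and an upper bound on the inner integral is useless for your blow-up argument, which needs the lower bound. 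You must invoke something extra, e.g.\ that criticality of $P$ is equivalent to recurrence of the Doob $\phi$-transformed semigroup and that recurrence implies conservativeness (or a monotonicity-in-$t$ argument identifying $\lim_{t\to\infty}u(\cdot,t)$ as an invariant positive solution and hence as $\phi$). Granting the identity, the Fubini/monotone-convergence computation is correct.

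The genuine gap is in the first assertion. You correctly reduce it (via preservation of criticality under equivalence) to showing that $P_\vge$ cannot be critical for every $\vge$ in an open interval around $0$ when $V\not\equiv 0$, but the concluding step is only announced (``I would exploit the concavity \dots would then rule out $V\not\equiv 0$''); it is moreover restricted to the symmetric case, whereas the proposition assumes no symmetry, and the intermediate use of Duhamel's formula on all of $M$ is itself unjustified here, since the paper only derives the resolvent equation under the $3k$-inequality, which you have just shown fails. The paper closes exactly this point in one line by citing \cite[Theorem~3.1]{YP1}: the set $S=\{\vge\mid P_\vge\ge 0\}$ is an interval and every critical value of $\vge$ lies on $\partial S$, so a critical $P$ admits at most one nonzero $\vge_1$ with $P_{\vge_1}$ critical; hence for small $\vge\notin\{0,\vge_1\}$ the operator $P_\vge$ is not critical and therefore $k_{P_\vge}^M\not\asymp\kP$. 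Your sketch should be replaced by this (or an equivalent completed) argument.
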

 \begin{proof}
 It follow from \cite[Theorem~3.1]{YP1}, that if $P$ is critical, then there exists at most one $\vge_1\neq 0$ such that $P_\vge$ is also critical in $M$. Hence, $\kP\not \asymp k_{P_\vge}^M$ for all $\vge\neq\vge_{1}$.  In light of \eqref{eq_hkp-gl}, we conclude the result for all $\gl\leq 0$.  The last part of the proposition follows from  the proof of first part and Theorem~\ref{thm_3k_implies_eq}.
\end{proof}
\begin{rem}{\em
Proposition~\ref{thm-critical} is counter intuitive, since in the context of the Green function, even if $P-\gl_0$ is critical in $M$, yet for any nonzero potential $V$ with a compact support, and any $\gl<\gl_0$  there exists $\vge_0=\vge_0(V,\gl)>0$ such that $G_{P-\gl}^{M} \asymp G_{P_{\varepsilon}-\lambda}^{M}$ for any $|\varepsilon| < \varepsilon_{0}$.
}
\end{rem}
\begin{rem}{\em
Let $P$ be a subcritical operator in $M$ and $V$ a nonzero potential. Then $P-\lambda_{0}$ is subcritical if there exists $\varepsilon_{0}$ such that $k^{M}_{P-\lambda} \asymp k^{M}_{P_{\varepsilon} - \lambda}$ for all $|\varepsilon| < \varepsilon_{0}$ for  some $\lambda \leq \lambda_{0}$.
}
\end{rem}
The proof of Theorem~\ref{main} relies on a suitable $3k$-inequality (see Definition~\ref{def3k} below). We note that an analogous $3G$-inequality is used frequently  for proving the equivalence of Green functions (see for example \cite{MM1,YP1,YP2}).
\section{Preparatory results}\label{secpreliminaries}
In the present section we recall some basic properties of the heat kernel, define the notion of $3k$-inequality, and prove some basic general results concerning the equivalence of heat kernels. The lemma below summarizes some fundamental properties of the heat kernel.
\begin{lem}\label{properties}
Let $P$ be an elliptic operator either of the form \eqref{P} or \eqref{operator}, which is nonnegative in $M$. Then the positive minimal heat kernel $k_{P}^{M} (x, y, t)$ satisfies the following properties:
\begin{enumerate}
\item $k_{P}^{M} (x, y, t)$  satisfies the Chapman-Kolmogorov equation (the semigroup property)
\[
k_{P}^{M} (x, y, s + t) = \int_{M} k_{P}^{M} (x, z, s) k_{P}^{M} (z, y, t) \dz \quad  \forall s, t > 0 \mbox{ and } \forall \ x, y \in M.
\]

\item
$
  \kP(x, y, t) \geq 0, \quad  k_{P^\star}^{M}(x, y, t)  = \kP(y, x, t) \quad \forall t > 0  \mbox{ and } \forall \ x, y \in M.
$
\item The heat kernel is monotone increasing as a function of the domain.


\item If $V\geq 0$, then $k_{P+V}^{M}\leq \kP$


\hspace{-2cm}  Suppose further that $P$ is symmetric. Then:


\item   $\kP(x, y, t)   \leq  \kP(x, x, t)^{\frac{1}{2}} \kP(y, y, t)^{\frac{1}{2}}\qquad \forall \ t > 0 \mbox{ and } \forall \  x, y \in M.$


\item The function $t \mapsto \kP(x, x, t)$ is positive, monotone decreasing and log-convex for all $x \in M$.


\item Assume that  $P$ is a nonnegative selfadjoint operator on $L^{2}(M, \dx)$, then
\[
\mathrm{e}^{-Pt}f(x) = \int_{M} k_{P}^{M}(x,y,t) f(y) \dy
\]
for all $ t > 0$ and $f \in L^{2}(M,\dx).$
\end{enumerate}
\end{lem}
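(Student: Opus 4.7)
The plan is to construct $\kP$ as the monotone limit of Dirichlet heat kernels $k_P^{M_n}$ on an exhaustion $M_1\csub M_2\csub\cdots$ of $M$ by smooth precompact subdomains, verify each property on the bounded $M_n$ (where classical parabolic regularity and, in the symmetric case, $L^2$-spectral theory apply), and then pass to the limit. Assertion (3) drives the whole construction: extending $k_P^{M_n}(\cdot,y,\cdot)$ by zero to any larger $M_m$ produces a weak subsolution of $(\pd_t+P)w=0$ with initial distribution $\gd_y$, so the parabolic maximum principle yields $k_P^{M_n}\le k_P^{M_m}$, and the pointwise monotone limit is $\kP$, the minimal positive heat kernel.

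With this framework, the remaining general assertions transfer from $M_n$ to $M$ by monotone (and locally uniform) convergence. Property (1) is the semigroup identity for $\mathrm{e}^{-tP}$ on $L^2(M_n)$; monotone convergence carries it to $M$. For (2), strict positivity follows from the strong parabolic maximum principle, and the adjoint symmetry $k_{P^\star}^{M_n}(x,y,t)=k_P^{M_n}(y,x,t)$ is obtained by differentiating $s\mapsto\int_{M_n}k_P^{M_n}(x,z,t-s)\,k_{P^\star}^{M_n}(z,y,s)\dz$, noting that the boundary contributions from Green's identity vanish under the Dirichlet condition. For (4), when $V\ge 0$ the function $k_P^{M_n}$ is a supersolution of $(\pd_t+P+V)w=0$ sharing its initial data with $k_{P+V}^{M_n}$, so $k_{P+V}^{M_n}\le k_P^{M_n}$ by comparison.

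In the symmetric case, (5) is obtained by applying (1) and (2) (with $P=P^\star$) to write
\[
\kP(x,y,t)=\int_M \kP(x,z,t/2)\,\kP(y,z,t/2)\dz,
\]
and invoking the Cauchy--Schwarz inequality in $L^2(M,\dx)$. For (6) I would work on each $M_n$, where $P$ admits an $L^2(M_n,\dx)$-orthonormal basis of Dirichlet eigenfunctions $\{\phi_k^{(n)}\}$ with eigenvalues $\lambda_k^{(n)}\ge 0$ (using $P\ge 0$ in $M$ together with domain monotonicity of the principal eigenvalue), so that $k_P^{M_n}(x,x,t)=\sum_k \mathrm{e}^{-\lambda_k^{(n)}t}\,\phi_k^{(n)}(x)^2$; this is positive by the strong maximum principle, manifestly decreasing in $t$, and log-convex (each summand is log-linear and a nonnegative sum of log-convex functions is log-convex by H\"older's inequality), and all three properties survive the monotone limit $n\to\infty$. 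For (7), the integral operator $T_t f(x):=\int_M\kP(x,y,t)f(y)\dy$ is a symmetric, positivity-preserving, contractive semigroup on $L^2(M,\dx)$ whose infinitesimal generator extends $P$ on $\core$; since $P$ is nonnegative and symmetric, the Friedrichs extension is the unique self-adjoint realization with this core, forcing $T_t=\mathrm{e}^{-tP}$.

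The principal technical obstacle is the limit passage in the integral identities, especially in (1) and (7). For (1), one must commute $\lim_{n\to\infty}$ with the $\dz$-integration in the Chapman--Kolmogorov identity, which is handled by monotone convergence thanks to (3). For (7), the delicate point is identifying the generator of the limiting semigroup with the prescribed self-adjoint realization of $P$; this is precisely where the symmetry and nonnegativity of $P$ carry the proof via uniqueness of the Friedrichs extension and of the minimal heat semigroup.
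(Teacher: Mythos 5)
The paper gives no proof of this lemma; it simply defers to Davies \cite{DA1}, Lemma~1. Your exhaustion construction --- realising $\kP$ as the increasing limit of Dirichlet kernels $k_P^{M_n}$, checking each property on the precompact $M_n$ (maximum principle, Green's identity, the $L^2$-eigenfunction expansion), and passing to the monotone limit --- is the standard route and is essentially what the cited reference does. In particular, your proofs of (1)--(6) are correct as stated: for (6), note that domain monotonicity of the principal Dirichlet eigenvalue gives $\lambda_k^{(n)}\ge\lambda_0(P,\mathbf 1,M)\ge 0$, so the eigenfunction expansion is indeed a positive combination of decaying exponentials, and positivity, monotonicity and log-convexity survive pointwise monotone limits.

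The one overstatement is in item (7), where you assert that ``the Friedrichs extension is the unique self-adjoint realization with this core.'' That is false in general: when the restriction of $P$ to $C_c^\infty(M)$ is not essentially self-adjoint --- which is the typical situation on incomplete manifolds, and can occur even on some complete ones --- there are many self-adjoint extensions. What you actually need, and what your construction does deliver, is the weaker and correct statement that the semigroup $T_t f=\int_M\kP(\cdot,y,t)f(y)\dy$ is generated by the \emph{Friedrichs} (i.e.\ Dirichlet) extension of $P$. This follows from the quadratic-form picture: the Dirichlet forms on $M_n$ increase monotonically to the Friedrichs form on $M$, hence the corresponding resolvents (and therefore semigroups) converge strongly, and the limiting generator is precisely the Friedrichs operator. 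Statement~(7) should therefore be read with $P$ understood as that particular self-adjoint realization; for any other extension (e.g.\ a Neumann realization on a bounded domain) the identity with the minimal heat kernel fails. This is a phrasing issue rather than a gap, since the correct fact is what your monotone-limit argument establishes.
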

For the proof of the above lemma we refer to \cite[Lemma~1]{DA1}.


In the sequel we need the following log-convexity property of the heat kernels with respect to a perturbation by a potential $W$ (see for example \cite[Lemma B.7.73]{S82}).
\begin{prop}\label{pro_conv}
Suppose that the elliptic operators $P_0$ and $P_1:=P_0+W$ both admit positive minimal heat kernels $k_0$ and $k_1$, respectively, in $M$. Then
for any $0\leq \ga \leq 1$, the operator $P_\ga:= (1-\ga) P_0 +  \ga P_1$ admits a positive minimal heat kernel $k_\ga$ in $M$,
and $k_\ga$ satisfies
\begin{equation}\label{convexity}
k_\alpha(x,y,t) \leq (k_{0}(x,y,t))^{(1- \alpha)} (k_{1}(x,y,t))^{\alpha}\qquad \forall \ x, y \in M, \mbox{ and } t > 0.
\end{equation}
\end{prop}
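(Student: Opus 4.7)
The plan is to combine the Trotter--Kato product formula with H\"older's inequality, using the Chapman--Kolmogorov identity from Lemma~\ref{properties} to identify the limiting factors.

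First I would establish that $P_\alpha=P_0+\alpha W$ itself admits a positive minimal heat kernel $k_\alpha$. Since $\alpha W$ has the same local regularity as $W=P_1-P_0$, standard parabolic theory yields this: one obtains $k_\alpha$ as the monotone increasing limit of the Dirichlet heat kernels $k_{P_\alpha}^{\Omega_j}$ along a smooth relatively-compact exhaustion $\{\Omega_j\}$ of $M$.

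Next I would invoke the Trotter--Kato product formula
\[
\mathrm{e}^{-tP_\alpha} = \lim_{n\to\infty}\bigl(\mathrm{e}^{-tP_0/n}\,\mathrm{e}^{-\alpha tW/n}\bigr)^n,
\]
and iterate to write out the integral kernel of the right-hand side. With $x_0=x$ and $x_n=y$, the $n$-th approximant equals
\[
K_n^{\alpha}(x,y,t) = \int_{M^{n-1}}\prod_{i=0}^{n-1} k_0(x_i,x_{i+1},t/n)\,\prod_{i=1}^{n} \mathrm{e}^{-\alpha tW(x_i)/n}\,\mathrm{d}x_1\cdots \mathrm{d}x_{n-1}.
\]
The decisive algebraic step is to split the integrand as
\[
\Bigl(\prod_{i=0}^{n-1} k_0(x_i,x_{i+1},t/n)\Bigr)^{1-\alpha}\cdot\Bigl(\prod_{i=0}^{n-1} k_0(x_i,x_{i+1},t/n)\cdot\prod_{i=1}^{n}\mathrm{e}^{-tW(x_i)/n}\Bigr)^{\alpha},
\]
and apply H\"older's inequality on $(M^{n-1},\mathrm{d}x_1\cdots \mathrm{d}x_{n-1})$ with conjugate exponents $1/(1-\alpha)$ and $1/\alpha$, obtaining
\[
K_n^{\alpha}(x,y,t) \leq K_n^{0}(x,y,t)^{1-\alpha}\cdot K_n^{1}(x,y,t)^{\alpha}.
\]
The first factor equals $k_0(x,y,t)$ for every $n$ by Chapman--Kolmogorov, while the second factor is precisely the $n$-th Trotter approximant of $k_1(x,y,t)$ and therefore converges to $k_1(x,y,t)^{\alpha}$. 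Taking $n\to\infty$ gives~\eqref{convexity}.

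The main obstacle I expect is the rigorous validity of Trotter--Kato in this generality, since $W$ is only assumed to make $P_0+W$ admit a positive minimal heat kernel, with no boundedness or sign hypothesis. The cleanest remedy is to prove the inequality first for bounded compactly supported truncations $W_k$ of $W$ restricted to subdomains $\Omega_j$ of an exhaustion, where Trotter--Kato on Dirichlet heat kernels is classical, and then pass to the monotone limits $j\to\infty$ and $k\to\infty$, identifying the results with the minimal heat kernels of $P_0$, $P_0+W$, and $P_0+\alpha W$. A more probabilistic alternative, natural in the symmetric case, is to write $k_\alpha(x,y,t)/k_0(x,y,t)$ as a Feynman--Kac expectation with respect to the $P_0$-bridge measure and apply Jensen's inequality to the concave function $u\mapsto u^{\alpha}$ on $[0,\infty)$.
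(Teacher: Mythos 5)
Your argument is correct and coincides with the standard proof of this fact: the paper itself does not prove Proposition~\ref{pro_conv} but cites it to \cite[Lemma~B.7.73]{S82}, and the proof there is exactly the Feynman--Kac/Trotter discretization followed by H\"older's inequality with exponents $1/(1-\alpha)$ and $1/\alpha$ that you describe (your Jensen variant on the normalized bridge measure is an equivalent packaging). The only technical point worth keeping in view is the one you already flag: the Trotter limit is a priori an $L^2$ statement, so to get the pointwise kernel inequality one should indeed run the H\"older step on the Dirichlet kernels of an exhaustion with truncated potentials --- where the approximants converge locally uniformly by parabolic regularity --- and then pass to the monotone limit defining the minimal heat kernels, which simultaneously yields the finiteness of $k_\alpha$.
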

\begin{defi}\label{def3k}{\em
Let $P$ be a subcritical operator defined on $M.$ We say that the heat kernel $\kP$ satisfies the {\em $3k$-inequality} with respect to $V$ if there exists a constant $C > 0$ such that the following inequality holds true:
\begin{equation}\label{3k}
\!\int_{0}^{t}\!\! \int_{M}\!\!\! k_{P}^{M}(x,z, t-s) |V(z)| k_{P}^{M}(z,y,s)\! \dz \!\ds\!
   \leq \! C k_{P}^{M}(x,y,t) \,\,\; \forall x, y \!\in\! M, \mbox{ \!and } t \! > \!0.
\end{equation}
 We say that the heat kernel $\kP$ satisfies the  {\em restricted $3k$-inequality} with respect to $V$ if for any $T>0$ there exists a constant $C(T) > 0$ such that the following inequality holds true:
\begin{equation}\label{3kT}
\int_{0}^{t} \int_{M} k_{P}^{M}(x,z, t-s) |V(z)| k_{P}^{M}(z,y,s) \dz \ds \leq  C(T) k_{P}^{M}(x,y,t)
\end{equation}
for all $x, y \in M$  and $0 < t \leq T$.
 }
\end{defi}
If $V$ is a bounded potential, then the heat kernel satisfies the restricted $3k$-inequality. Indeed, the Chapman-Kolmogorov equation clearly implies:
\begin{prop}\label{thm3kbounded}
Let $P$ be an elliptic operator either of the form \eqref{P} or \eqref{operator}, which is nonnegative in $M$, and let $V$ be a bounded potential. Then the following
restricted $3k$-inequality holds true
\begin{equation*}
\!\int_{0}^{t}\!\! \int_{M}\!\!\! k_{P}^{M}(x, z, t-s) |V(z)| k_{P}^{M}(z, y, s)\! \dz \ds\! \leq \! T\|V\|_\infty k_{P}^{M}(x, y, t)
\end{equation*}
for all $x, y \in M$ and  $0 < t \leq T$.
\end{prop}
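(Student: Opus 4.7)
The plan is to obtain the restricted $3k$-inequality by a direct computation: first bound $|V(z)|$ by its essential supremum $\|V\|_\infty$ pointwise, then exchange the order of integration (or equivalently first evaluate the spatial integral via the semigroup property), and finally use that $s$ runs over an interval of length at most $T$.

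More precisely, since $V$ is bounded, for every fixed $x,y \in M$ and $0 < t \leq T$ one has
\begin{equation*}
\int_{0}^{t}\!\! \int_{M}\! k_{P}^{M}(x,z,t-s)\, |V(z)|\, k_{P}^{M}(z,y,s) \dz \ds
\leq \|V\|_\infty \int_{0}^{t}\!\! \int_{M}\! k_{P}^{M}(x,z,t-s)\, k_{P}^{M}(z,y,s) \dz \ds.
\end{equation*}
By the Chapman--Kolmogorov identity recorded in Lemma~\ref{properties}(1), the inner spatial integral equals $k_P^M(x,y,t)$, which is independent of $s$. Integrating the constant in $s$ over $(0,t)$ produces a factor of $t \leq T$, yielding the desired bound.

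There is no substantive obstacle: the argument is a one-line application of the semigroup property combined with the trivial estimate $|V| \leq \|V\|_\infty$, and the nonnegativity of the heat kernel (Lemma~\ref{properties}(2)) is what allows pulling $\|V\|_\infty$ out of the integral without worrying about signs. The only reason the resulting inequality is \emph{restricted} (depending on $T$) rather than global in $t$ is that the bound produced is proportional to $t$, so it blows up as $t \to \infty$; this is exactly why controlling the genuine (unrestricted) $3k$-inequality for large $t$ will require the more delicate arguments developed later in the paper, whereas boundedness of $V$ alone suffices on any finite time interval.
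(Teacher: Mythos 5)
Your argument is exactly the one the paper intends: the paper gives no separate proof, merely noting that ``the Chapman--Kolmogorov equation clearly implies'' the restricted $3k$-inequality for bounded $V$, which is precisely your bound $|V|\leq\|V\|_\infty$ followed by the semigroup identity and integration in $s$ over an interval of length at most $T$. The proposal is correct and takes essentially the same approach.
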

The next theorem asserts that if $\kP$ satisfies the $3k$-inequality, then for small $|\vge|$, we have $k_{P_\varepsilon}^{M}\asymp \kP$ (cf. \cite[Theorem 5.3]{FKP}).
\begin{thm}\label{thm_3k_implies_eq}
\emph{(1)} Let $V$ be a potential such that  $\kP$ satisfies the $3k$-inequality \eqref{3k}. Then there exists $\vge_0 >0$ such that $k_{P_\varepsilon}^{M}\asymp \kP$ for all $|\vge| <\vge_0$.


\emph{(2)} If $\kP$ satisfies the restricted $3k$-inequality \eqref{3kT}, then
for any $T>0$ there exits positive $\vge_0(T)$ such that for all $\vge<\vge(T)$
\begin{equation}\label{eq_eqiv4}
k_{P_\vge}^M \asymp
k_{P}^M \qquad   \mbox{ on } M \times M \times (0,T].
\end{equation}

\emph{(3)} Under the assumptions of either \emph{(1)} or  \emph{(2)}, let $\vge$ be such that \eqref{eq_eqiv4} holds true with  $0<T\leq \infty$. Then the heat kernel $k_{P_\varepsilon}^{M}$ satisfies the resolvent equations
\begin{multline}\label{r_eq3}
   k_{P_\varepsilon}^{M}(x, y ,t)  = \kP(x, y, t) \!+\! \varepsilon \!\int_{0}^{t} \!\!\int_{M} \!\!\kP(x, z, t\!-\!s) V(z) k_{P_\vge}^{M} (z, y, s) \dz\! \ds\\[2mm]
=\kP(x, y, t)\! +\! \varepsilon \int_{0}^{t}\!\! \int_{M} \!\!k_{P_\vge}^{M}(x, z, t\!-\!s) V(z) \kP (z, y, s) \dz \!\ds
\end{multline}
for all $(x,y,t)\in M \times M \times (0,T)$.
\end{thm}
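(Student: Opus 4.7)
\medskip
\noindent\textbf{Proof proposal.} The plan is to represent $k_{P_\varepsilon}^M$ by a Neumann-type series in $\varepsilon$ and use the $3k$-inequality to dominate its terms by a geometric multiple of $k_P^M$.

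First I would recall the Duhamel formula for the minimal heat kernel of the perturbed operator $P_\varepsilon=P-\varepsilon V$: namely,
\[
k_{P_\varepsilon}^M(x,y,t) = k_P^M(x,y,t) + \varepsilon\int_0^t\!\!\int_M k_P^M(x,z,t-s)\,V(z)\,k_{P_\varepsilon}^M(z,y,s)\dz\ds,
\]
which is a standard consequence of the variation-of-parameters principle, first on a precompact exhaustion of $M$ (where the Dirichlet heat kernels are classical) and then by monotone limits using that $V\in L^p_\loc$ with $p>N/2$. Iterating this identity formally suggests the Neumann series $k_{P_\varepsilon}^M = \sum_{n=0}^{\infty}\varepsilon^n k_n$, where $k_0:=k_P^M$ and
\[
k_n(x,y,t):=\int_0^t\!\!\int_M k_P^M(x,z,t-s)\,V(z)\,k_{n-1}(z,y,s)\dz\ds \qquad (n\ge1).
\]

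Next I would bound $|k_n|$ by induction using the $3k$-inequality. Let $\tilde k_n$ be defined by the same recursion but with $|V|$ replacing $V$; then $|k_n|\le\tilde k_n$, and $\tilde k_1(x,y,t)\le C\,k_P^M(x,y,t)$ by \eqref{3k}. Inductively, assuming $\tilde k_{n-1}(z,y,s)\le C^{n-1}k_P^M(z,y,s)$, a further application of \eqref{3k} gives $\tilde k_n\le C^n k_P^M$. Choosing $\varepsilon_0:=1/(2C)$, the series converges absolutely for $|\varepsilon|<\varepsilon_0$ and satisfies
\[
\frac{1-2|\varepsilon|C}{1-|\varepsilon|C}\,k_P^M \le \sum_{n=0}^\infty \varepsilon^n k_n \le \frac{1}{1-|\varepsilon|C}\,k_P^M.
\]
To identify this sum with $k_{P_\varepsilon}^M$, I would note that $u:=\sum\varepsilon^n k_n$ is positive, solves $\partial_t u+P_\varepsilon u=0$ with initial datum $\delta_y$ (as can be verified term-by-term, the sum being absolutely convergent with all local $L^p$ bounds uniform), and therefore coincides with the minimal positive heat kernel $k_{P_\varepsilon}^M$ by uniqueness/minimality. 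This proves assertion (1). Assertion (2) follows by running the identical argument with the constant $C(T)$ of the restricted $3k$-inequality \eqref{3kT} in place of $C$, and with the domain of integration in the iteration confined to $t\in(0,T]$.

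For assertion (3), the equivalence \eqref{eq_eqiv4} guarantees that the convolution integrals appearing in \eqref{r_eq3} are absolutely convergent on $M\times M\times(0,T)$; then the two identities in \eqref{r_eq3} are obtained by (i) substituting the convergent Neumann expansion for $k_{P_\varepsilon}^M$ into the right-hand side and telescoping, and (ii) applying the same argument after interchanging the roles of $x$ and $y$ (equivalently, using the corresponding Duhamel formula for $P^\star_\varepsilon$ together with $k_{P^\star}^M(x,y,t)=k_P^M(y,x,t)$ from Lemma~\ref{properties}).

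The main technical obstacle I anticipate is justifying that the sum of the Neumann series is precisely the minimal heat kernel $k_{P_\varepsilon}^M$, rather than merely some positive solution of the parabolic equation satisfying the Duhamel identity. I would handle this by an exhaustion argument on precompact subdomains $M_j\nearrow M$, where the Dirichlet heat kernels are smooth and the iteration is classical, proving the analogous equivalence on each $M_j$ with constants uniform in $j$, and then passing to the monotone limit via Lemma~\ref{properties}(3). The remaining computations (the inductive bound $\tilde k_n\le C^nk_P^M$ and the geometric summation) are then routine.
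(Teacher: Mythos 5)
Your overall strategy (Neumann series, inductive bound $|k_n|\le C^n k_P^M$ via the $3k$-inequality, geometric summation, Duhamel) is exactly the paper's, and the upper bound $k_{P_\varepsilon}^M\le\sum\varepsilon^nk_n\le\frac{1}{1-C|\varepsilon|}k_P^M$ is obtained correctly from the minimality of $k_{P_\varepsilon}^M$. The genuine gap is in how you get the \emph{lower} bound. You derive the two-sided estimate for the \emph{sum} $u:=\sum\varepsilon^nk_n$ and then transfer it to $k_{P_\varepsilon}^M$ by asserting that $u$ ``coincides with the minimal positive heat kernel by uniqueness/minimality.'' Minimality only gives $k_{P_\varepsilon}^M\le u$; a positive fundamental solution with initial datum $\delta_y$ need not be the minimal one, so the lower bound for $u$ does not automatically descend to $k_{P_\varepsilon}^M$. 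Your proposed repair --- proving the equivalence on each $M_j$ ``with constants uniform in $j$'' --- does not obviously work either, because the $3k$-inequality is assumed only for $k_P^M$ on $M$; the Dirichlet kernels $k_P^{M_j}$ satisfy $k_P^{M_j}\le k_P^M$, which bounds the iterated integral by $Ck_P^M(x,y,t)$ but \emph{not} by $Ck_P^{M_j}(x,y,t)$, so no $j$-uniform $3k$-inequality (hence no $j$-uniform lower bound) is available on the exhaustion.

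The fix is short and uses only ingredients you already have, in a different order: first get the upper bound $k_{P_\varepsilon}^M\le\frac{1}{1-C|\varepsilon|}k_P^M$ from minimality; then pass to the limit in the Duhamel identity on $M_j$ using dominated convergence (the dominating function being $k_P^M(x,z,t-s)|V(z)|k_P^M(z,y,s)$ times the constant $\frac{1}{1-C|\varepsilon|}$, integrable by the $3k$-inequality) to obtain the resolvent equation for $k_{P_\varepsilon}^M$ itself on $M$; and finally estimate the error term there:
\begin{equation*}
k_{P_\varepsilon}^{M}\ \ge\ k_P^M-\frac{|\varepsilon|}{1-C|\varepsilon|}\int_0^t\!\!\int_M k_P^M(x,z,t-s)|V(z)|k_P^M(z,y,s)\dz\ds\ \ge\ \frac{1-2C|\varepsilon|}{1-C|\varepsilon|}\,k_P^M,
\end{equation*}
which is the paper's argument and gives the equivalence for $|\varepsilon|<1/(2C)$ without ever needing $u=k_{P_\varepsilon}^M$. (The identity $u=k_{P_\varepsilon}^M$ then follows \emph{a posteriori}, but it is not needed for the statement.) Note also that for the same reason your opening derivation of the Duhamel formula on $M$ by ``monotone limits'' needs dominated convergence rather than monotonicity, since $V$ is signed. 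With this reordering, your parts (2) and (3) go through as you describe.
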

\begin{proof}[Proof of \emph{(1)} and \emph{(2)}]
Fix $0<T\leq \infty$ and $y\in M$. Consider the iterated kernel
\begin{equation*}\label{i2}
k^{(i)}_{P}(x,y,t) := \left\{
                       \begin{array}{ll}
                         \kP(x,y,t) & i=0, \\[2mm]
                         \int_{0}^{t} \int_{M} k^{(i-1)}_{P}(x, z, t-s) V(z) \kP(z, y, s) \dz \ds & i\geq 1.
                       \end{array}
                     \right.
\end{equation*}
It follows from  the $3k$-inequality \eqref{3k} (or the restricted $3k$-inequality \eqref{3kT}) that for all $0<t<T$ we have
\begin{equation}\label{eq_ki}
k^{(i)}_{P}(x,y,t)\leq C^i \kP(x,y,t).
\end{equation}
Hence,
\begin{equation}\label{eq_sum}
\sum_{i = 0}^{\infty} |\varepsilon|^{i} |k^{(i)}_{P}(x,y,t)| \leq
  \frac{1}{1-C|\varepsilon|} \kP(x, y, t),
\end{equation}
provided $|\varepsilon| < C^{-1}$.

Fix such $\vge$. Using a standard parabolic regularity argument, it follows that the Neumann series
\[
H^{\vge}_{P}(x, y, t) := \sum_{i = 0}^{\infty} \varepsilon^{i} k^{(i)}_{P}(x, y, t)
\]
converges locally uniformly in $M\times (0,T)$ to a positive fundamental solution of the equation
$(u_t+P_\vge) u = 0$. Hence, $k_{P_\varepsilon}^{M}(x, y, t)$ exists, and by the minimality of the heat kernel and \eqref{eq_sum} we obtain
\begin{equation}\label{eq_est}
    k_{P_\varepsilon}^{M}(x, y, t)\leq H^{\vge}_{P}(x, y, t)\leq  \frac{1}{1-C|\varepsilon|} \kP(x, y, t).
\end{equation}

Let $M_j$ be an exhaustion of $M$, i.e., a sequence of smooth,
relatively compact subdomains of $M$ such that $y\in M_1$,
$M_{j}\Subset M_{j+1}$ and
$\cup_{j=1}^{\infty}M_{j}=M$.

Using the resolvent equation (Duhamel's principle) in  $M_j$
\begin{equation}\label{Duham}
 k_{P_\varepsilon}^{M_j}(x, y ,t)  = k_{P}^{M_j}(x, y, t) + \varepsilon \int_{0}^{t} \int_{M_j} k_{P}^{M_j}(x, z, t-s) V(z) k_{P_\varepsilon}^{M_j} (z, y, s) \dz \ds,
\end{equation}
and by the dominated convergence theorem,  we obtain that $k_{P_\varepsilon}^{M}$ satisfies the resolvent equation
$$k_{P_\varepsilon}^{M}(x, y ,t)  = \kP(x, y, t) + \varepsilon \int_{0}^{t} \int_{M} \kP(x, z, t-s) V(z) k_{P_\vge}^{M} (z, y, s) \dz \ds.
$$
Moreover, by the resolvent equation and inequality \eqref{3k}, we have
\begin{multline*}
k_{P_\varepsilon}^{M}(x, y, t)  = \kP(x, y, t) + \varepsilon \int_{0}^{t} \int_{M} \kP(x, z, t-s) V(z) k_{P_\varepsilon}^{M} (z, y, s) \dz \ds \\[2mm]
 \geq \kP(x, y, t) - \frac{|\varepsilon|}{1 - C|\varepsilon|} \int_{0}^{t} \int_{M} \kP(x, z, t-s) | V(z) | \kP(z, y, s) \dz \ds \\\\[2mm]
 \geq \kP(x, y, t) - \left( \frac{C|\varepsilon|}{1 - C|\varepsilon|} \right) \kP(x, y, t) = \left( \frac{1 -2 C|\varepsilon|}{1 - C|\varepsilon|} \right) \kP(x, y, t).
\end{multline*}
Hence, for $|\vge| < 1/(2C)$ we have $k_{P_\varepsilon}^{M}\asymp \kP$, which in turn implies that
$H^{\vge}_{P} \asymp k_{P_\varepsilon}^{M}$. The minimality of $k_{P_\varepsilon}^{M}$ implies now that
$$k_{P_\varepsilon}^{M}(x, y, t)= H^{\vge}_{P}(x, y, t) := \sum_{i = 0}^{\infty} \varepsilon^{i} k^{(i)}_{P}(x, y, t)$$ for any $|\vge| < 1/(2C)$.
This proves  parts (1) and (2) of the theorem.

Part~\mbox{(3)} of the theorem follows from the resolvent equation \eqref{Duham} in $M_j$, the $3k$-inequality, and the dominated convergence theorem.
\end{proof}
\begin{rem}{\em
It is evident from  Theorem~\ref{thm_3k_implies_eq} that in order to prove theorems~\ref{main} and \ref{spmain}, it is enough to establish the $3k$-inequality~\eqref{3k}.
 }
\end{rem}
For a perturbation by a nonnegative potential $V$, we have
\begin{lem}[{\cite[Corollary 2]{FKP}}]\label{lem1}
Let $P$ be a subcritical operator, and let $V$ be a nonnegative potential. Suppose that $k_{P_{\vge_0}}^{M}\asymp k_{P}^{M}$ for some $\vge_0 >0$. Then
$k_{P_\vge}^{M}\asymp k_{P}^{M}$ for any $\varepsilon <\vge_0$.
\end{lem}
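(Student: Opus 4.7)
The plan is to combine the log-convexity inequality from Proposition~\ref{pro_conv} with the monotonicity property of Lemma~\ref{properties}(4), splitting the argument into the two cases $0\le\vge<\vge_0$ and $\vge<0$.

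First I would handle the case $0\le\vge<\vge_0$. Since $V\ge 0$ and $P=P_\vge+\vge V$, Lemma~\ref{properties}(4) immediately yields $k_P^M\le k_{P_\vge}^M$. For the reverse inequality I would apply Proposition~\ref{pro_conv} to the endpoints $Q_0:=P$ and $Q_1:=P_{\vge_0}=P-\vge_0 V$, whose convex combination is $Q_\alpha=P-\alpha\vge_0 V=P_{\alpha\vge_0}$. Choosing $\alpha=\vge/\vge_0\in[0,1]$ and writing $k_{P_{\vge_0}}^M\le C\,k_P^M$ for the equivalence constant hypothesized in the statement, the log-convexity estimate gives
\[
k_{P_\vge}^M(x,y,t)\le \bigl(k_P^M(x,y,t)\bigr)^{1-\alpha}\bigl(k_{P_{\vge_0}}^M(x,y,t)\bigr)^{\alpha}\le C^{\alpha}\,k_P^M(x,y,t),
\]
so that $k_{P_\vge}^M\asymp k_P^M$ on this interval.

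For the case $\vge<0$, the upper bound $k_{P_\vge}^M\le k_P^M$ follows from Lemma~\ref{properties}(4) applied with the nonnegative potential $|\vge|V$, since now $P_\vge=P+|\vge|V$. For the lower bound I would again use Proposition~\ref{pro_conv}, this time with endpoints $Q_0:=P_\vge$ and $Q_1:=P_{\vge_0}$: their convex combination is $Q_\alpha=P-\bigl((1-\alpha)\vge+\alpha\vge_0\bigr)V$. The key observation is that the choice $\alpha=-\vge/(\vge_0-\vge)\in(0,1)$ produces $Q_\alpha=P$. Applying Proposition~\ref{pro_conv} and then $k_{P_{\vge_0}}^M\le C\,k_P^M$ yields
\[
k_P^M\le \bigl(k_{P_\vge}^M\bigr)^{1-\alpha}\bigl(k_{P_{\vge_0}}^M\bigr)^{\alpha}\le C^{\alpha}\bigl(k_{P_\vge}^M\bigr)^{1-\alpha}\bigl(k_P^M\bigr)^{\alpha},
\]
which rearranges to $k_P^M\le C^{\alpha/(1-\alpha)}\,k_{P_\vge}^M$, and combined with the upper bound gives $k_{P_\vge}^M\asymp k_P^M$ as desired.

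I do not anticipate a genuine obstacle here: the entire argument is a straightforward bookkeeping of convex combinations, and both tools used (Proposition~\ref{pro_conv} and the monotonicity of the heat kernel in a nonnegative perturbation) are already established earlier in the paper. The only mildly delicate point is selecting the correct convex combination in the negative-$\vge$ case so that the resulting interpolated operator is exactly $P$; this is what allows log-convexity to deliver a \emph{lower} bound for $k_{P_\vge}^M$ rather than the upper bound it normally produces.
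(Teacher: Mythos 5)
Your argument is correct and is essentially identical to the paper's own proof: both use the monotonicity of the heat kernel under addition of a nonnegative potential (the generalized maximum principle, i.e.\ Lemma~\ref{properties}(4)) together with the log-convexity of Proposition~\ref{pro_conv}, with the same choices $\alpha=\vge/\vge_0$ for $0\le\vge\le\vge_0$ and $\alpha=-\vge/(\vge_0-\vge)$ for $\vge<0$. No further comment is needed.
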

We provide here a detailed proof of the above lemma.
\begin{proof}
By the generalized maximum principle, if $\vge_1 < \vge_2$, then
\begin{equation}\label{mono}
k_{P_{\vge_1}}^{M} \leq k_{P_{\vge_2}}^{M}.
\end{equation}
So, by our assumption, there exists $C>0$ such that
$$\kP\leq k_{P_{\vge_0}}^{M} \leq C\kP.$$

Let $0\leq \vge\leq \vge_0.$ Then, by \eqref{mono} and Proposition~\ref{pro_conv} with  $0\leq \ga:= \frac{\vge}{\vge_0}\leq 1$, we have
\begin{equation*}\label{convexity1}
k_{P_\varepsilon}^{M}\leq (\kP)^{1- \alpha} (k_{P_{\vge_0}}^{M})^{ \alpha}\leq C^\ga\kP=C^{\vge/{\vge_0}}\kP.
\end{equation*}
On the other hand,  if $\vge< 0$, then  by \eqref{mono}  and Proposition~\ref{pro_conv}
we have with $\ga=-\vge/(\vge_0-\vge)$
\[
k_{P_{\vge}}^{M} \leq k_{P}^{M}   \leq (k_{P_{\vge_0}}^{M})^{ \alpha} (k_{P_{\vge}}^{M})^{1- \alpha}\leq C^\ga (k_{P}^{M})^{ \alpha}(k_{P_{\vge}}^{M})^{1- \alpha} ,
\]
and hence,
$$k_{P_{\vge}}^{M} \leq k_{P}^{M}   \leq C^{\ga/(1-\ga)}k_{P_{\vge}}^{M}=C^{-\vge/{\vge_0}}k_{P_{\vge}}^{M}.$$
So,
\begin{equation*}\label{convexity2}
C^{\vge/{\vge_0}}\kP\leq  k_{P_\varepsilon}^{M}\leq \kP,
\end{equation*}
and this completes the proof of the lemma.
\end{proof}
\begin{remark}\label{remBZ}
{\em
In \cite{BDS,BS,Z2}, the authors consider the special case of the Laplacian on $\R^N$ and {\em signed} potential perturbations. It is proved there that for $V\geq  0$ which is in a certain $L^p$ subspaces, $k_{-\Gd}^{\R^N}\asymp k_{-\Gd-\vge V}^{\R^N}$ for {\em any}  $\vge \leq 0$.
Our Lemma~\ref{lem1} and Theorem~\ref{spmain}, applied to this particular case, extend these results even for signed potentials $V$, since in this case, by our results, the interval of equivalence is $(-\infty,\vge_0)$, where $\vge_0>0$ (and not only $\R_-$).
}
\end{remark}
Recall that by \cite{YP1}, the set
$$ S=S(P,V,M):=\{\vge\in \R \mid P_\vge \geq 0\mbox{ in } M\}$$
is a closed convex set, which contains the convex set
$$S_+=S_+(P,V,M):=\{\vge\in \R \mid P_\vge  \mbox{ is subcritical} \},$$
and
$$\{\vge\in \R \mid P_\vge  \mbox{ is critical} \}\subset \partial S. $$
Moreover,  if $V$ is a small perturbation of $P$ in the sense of Green functions, then $S_+=\mathrm{int}\,S$, and $G_{P_\vge}^M\asymp G_{P}^M$ for any $\vge\in S_+$ (see \cite{YP1}).


We note that, in general, the convexity of the set $$\{(\gl,\vge)\in \R^2\mid \gl\leq \gl_0,\;\; \vge\in S_+(P-\gl ,V,M)\},$$
implies that for any $\gl\leq\gl_0$, we have
\begin{equation}\label{eqS_+eq}
\{\vge\in \R \mid k_{P_\vge-\lambda }^M\asymp k_{P-\lambda }^M\} \subset S_+(P-\gl_0,V,M).
\end{equation}

The following lemma shows that under some conditions we have
\begin{equation}\label{eqS_+}
   \{\vge\in \R \mid k_{P_\vge}^M\asymp \kP\}= S_+(P-\gl_0,V,M).
\end{equation}
\begin{lem}\label{global-eq}
Let $P$ and $P-V$ be two subcritical elliptic operators  such that for some $0 < \alpha < \beta < 1,$ there holds
$k^{M}_{P_{\alpha}}\asymp \kP $  and $k^{M}_{P_{\beta}}\asymp k^{M}_{P-V}$. Then
\[
\kP \asymp k^{M}_{P-V}.
\]
\end{lem}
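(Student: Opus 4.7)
The plan is to exploit the log-convexity property established in Proposition~\ref{pro_conv}, applied with $P_0 := P$ and $P_1 := P - V$. Since $P_\alpha = P - \alpha V = (1-\alpha)P_0 + \alpha P_1$ for $\alpha \in (0,1)$ (and similarly for $\beta$), both $P_\alpha$ and $P_\beta$ admit positive minimal heat kernels in $M$ that are interpolated between $k^M_P$ and $k^M_{P-V}$ in a log-linear way.

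Concretely, I would first invoke Proposition~\ref{pro_conv} to obtain the two pointwise inequalities
\[
k^M_{P_\alpha}(x,y,t) \leq \bigl(k^M_P(x,y,t)\bigr)^{1-\alpha}\bigl(k^M_{P-V}(x,y,t)\bigr)^{\alpha},
\]
\[
k^M_{P_\beta}(x,y,t) \leq \bigl(k^M_P(x,y,t)\bigr)^{1-\beta}\bigl(k^M_{P-V}(x,y,t)\bigr)^{\beta},
\]
valid for all $(x,y,t) \in M \times M \times (0,\infty)$. These will be the only structural inputs; the rest is algebraic.

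Next, I would combine each log-convexity inequality with one of the two hypothesized equivalences. The hypothesis $k^M_{P_\alpha} \asymp k^M_P$ gives a constant $C_1>0$ with $k^M_P \leq C_1 k^M_{P_\alpha}$; substituting the first displayed inequality and cancelling a factor of $(k^M_P)^{1-\alpha}$ (heat kernels are strictly positive) yields $(k^M_P)^{\alpha} \leq C_1 (k^M_{P-V})^{\alpha}$, hence $k^M_P \leq C_1^{1/\alpha} k^M_{P-V}$. Symmetrically, the hypothesis $k^M_{P_\beta} \asymp k^M_{P-V}$ gives $k^M_{P-V} \leq C_2 k^M_{P_\beta}$; substituting the second inequality and cancelling $(k^M_{P-V})^{\beta}$ yields $k^M_{P-V} \leq C_2^{1/(1-\beta)} k^M_P$. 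The two bounds together produce the two-sided estimate $k^M_P \asymp k^M_{P-V}$ on $M \times M \times (0,\infty)$.

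There is no real obstacle in this argument; the only point to keep in mind is that the exponents $\alpha$ and $1-\beta$ must both lie strictly in $(0,1)$, which is exactly guaranteed by the hypothesis $0 < \alpha < \beta < 1$ and is what allows us to take the $1/\alpha$- and $1/(1-\beta)$-th roots to extract the desired linear bounds from the geometric-mean inequalities.
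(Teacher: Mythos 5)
Your proposal is correct and follows essentially the same route as the paper: both apply the log-convexity estimate of Proposition~\ref{pro_conv} with endpoints $P$ and $P-V$, then combine each interpolation inequality with one of the hypothesized equivalences and cancel the appropriate power of the heat kernel to extract the two one-sided bounds. Your write-up merely makes the exponent bookkeeping ($C_1^{1/\alpha}$ and $C_2^{1/(1-\beta)}$) explicit where the paper leaves it implicit.
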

\begin{proof}
By \eqref{eqS_+eq}, we may assume that $\gl_0(P,\mathbf{1},M)=0$.  Proposition \ref{pro_conv} and the lemma's hypothesis $\kP \asymp k^{M}_{P_{\alpha}}$ imply that
\begin{equation*}\label{glo1}
\kP \asymp k^{M}_{P_{\alpha}} \leq (\kP)^{1- \alpha} (k^{M}_{P-V})^{\alpha}.
\end{equation*}
This implies $ C_1 \kP\leq k^{M}_{P-V}$. Similarly,
\begin{equation*}\label{glo2}
 k^{M}_{P-V} \asymp k^{M}_{P_{\beta}} \leq (\kP)^{1- \beta} (k^{M}_{P-V})^{\beta},
\end{equation*}
implies $ k^{M}_{P-V}\leq C_2 \kP$. Hence, the lemma is proved.
\end{proof}
In the study of equivalence of heat kernels, one would expect that as in the elliptic case (see for example \cite{YP1,YP2}), the local Harnack inequality should play a pivotal role. Unfortunately, the parabolic Harnack inequality for nonnegative solutions is weaker than the elliptic one. Nevertheless, in the {\em symmetric case}, the heat kernel satisfies the following elliptic-type Harnack inequality due to E.~B.~Davies \cite[Theorem~10]{DA1}.
\begin{lem}[Davies-Harnack inequality for the heat kernel]\label{harnack}
Fix a compact subset $A$ of $M$, and $T > 0$. Then there exists a positive constant $C:= C(T,  A,P)$ such that
\begin{equation}\label{eqharnack}
\sup_{x,y\in A}\kP(x, y, t) \leq  C \inf_{x,y\in A}\kP(x, y, t)    \qquad \forall \ t \geq T.
\end{equation}
\end{lem}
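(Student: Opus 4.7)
The plan is to reduce the two-sided estimate to a comparison of diagonal values $\kP(\cdot,\cdot,t)$ on $A$, and to obtain the latter via the standard parabolic Harnack inequality (available under the regularity assumptions on $P$). The argument has three steps.

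\emph{Step 1 (reduction to the diagonal via symmetry).} By Chapman--Kolmogorov and symmetry (Lemma~\ref{properties}(1)--(2)),
\[
\kP(x,y,t) \;=\; \int_M \kP(x,z,t/2)\, \kP(y,z,t/2)\dz,
\]
so Cauchy--Schwarz (which is precisely Lemma~\ref{properties}(5)) yields $\kP(x,y,t)\le \kP(x,x,t)^{1/2}\kP(y,y,t)^{1/2}$. Hence it suffices to control $\kP(x,x,t)$ uniformly for $x\in A$, $t\ge T$.

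\emph{Step 2 (diagonal comparison).} For each fixed $z\in M$, the function $(x,s)\mapsto \kP(x,z,s)$ is a positive smooth solution of $u_t+Pu=0$ on $M\times(0,\infty)$. Apply the standard parabolic Harnack inequality on a cylinder $A'\times[s,s+\tau]$, where $A'$ is a fixed relatively compact neighborhood of $A$ and $\tau>0$ is fixed. Because $P$ has time-independent coefficients and the Harnack constant depends only on $A',\tau$, and $P$ (not on the particular positive solution), it is independent of $z$ and of $s\ge T/2$. Thus,
\[
\kP(x,z,s) \;\le\; C_{A,\tau}\,\kP(y,z,s+\tau) \qquad \forall\, x,y\in A,\ z\in M,\ s\ge T/2.
\]
Squaring, integrating in $z$, and using Chapman--Kolmogorov and symmetry once more,
\[
\kP(x,x,t) \;=\; \int_M \kP(x,z,t/2)^{2}\dz \;\le\; C_{A,\tau}^{2}\int_M \kP(y,z,t/2+\tau)^{2}\dz \;=\; C_{A,\tau}^{2}\,\kP(y,y,t+2\tau).
\]
Monotonicity of $t\mapsto \kP(y,y,t)$ (Lemma~\ref{properties}(6)) gives $\kP(y,y,t+2\tau)\le \kP(y,y,t)$, so $\kP(x,x,t)\le C_{A,\tau}^{2}\,\kP(y,y,t)$ for all $x,y\in A$ and $t\ge T$.

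\emph{Step 3 (assembling upper and lower bounds).} Steps 1 and 2 give $\sup_{x,y\in A}\kP(x,y,t)\le C'\,\kP(y_0,y_0,t)$ for any fixed $y_0\in A$. For the matching lower bound, apply the parabolic Harnack inequality to $(x,s)\mapsto \kP(x,y,s)$ on $A\times[s,s+\tau]$ to deduce $\kP(y,y,s)\le C''\,\kP(x,y,s+\tau)$ for $x,y\in A$, $s\ge T/2$. Using Step 2 to bound $\kP(y,y,s)\ge c\,\kP(y_0,y_0,s)$ and monotonicity of the diagonal to pass from time $s$ to time $s+\tau$ on the $y_0$-side, we obtain $\kP(y_0,y_0,t)\le C'''\,\kP(x,y,t)$ for $x,y\in A$ and $t\ge T$ (the finite interval $T\le t\le T+\tau$ is handled by the parabolic regularity and strict positivity of $\kP$). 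Taking the infimum on the right yields \eqref{eqharnack}. The principal technical obstacle is securing a Harnack constant that is uniform in $z\in M$ when integrating in Step 2 and uniform in $t\ge T$; this hinges on the time-independence of $P$ (for $t$-uniformity) and on the fact that the parabolic Harnack constant depends only on the cylinder and the operator, not on the particular positive solution (for $z$-uniformity).
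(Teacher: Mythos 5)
The paper does not prove this lemma but defers to Davies (reference \cite{DA1}, Theorem~10); your proof is a correct reconstruction along essentially the same lines: Chapman--Kolmogorov together with Cauchy--Schwarz reduces the claim to a comparison of the diagonal values $\kP(x,x,t)$ over $x\in A$, and that comparison is supplied by a $z$- and $s$-uniform application of the local parabolic Harnack inequality to the positive solution $(x,s)\mapsto\kP(x,z,s)$, combined with monotonicity of $t\mapsto\kP(x,x,t)$. The single technicality you correctly flag is the residual window $T\le t<T+\tau$ in Step~3; it is handled either by compactness and strict positivity of $\kP$ on $A\times A\times[T,T+\tau]$, or, more cleanly, by running Step~2 from $T/2$ in place of $T$ and choosing $\tau<T/2$ so that the combined estimate already covers all $t\ge T$ with no leftover interval.
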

\section{Proof of Theorem~\ref{main}}\label{secproof}
The proof of Theorem~\ref{main} hinges on the following key proposition.
\begin{prop}\label{mainprop}
Assume that $P$, $V$ and $\kP$ satisfy the assumptions of the first part of Theorem~\ref{main}. In addition, assume that the diameter of $\supp V$ is small enough. Then the corresponding heat kernel $\kP$ satisfies the $3k$-inequality \eqref{3k}. Consequently, there exists $\vge_0>0$ such that $k_{P_\vge}^M\asymp \kP$ for all $|\vge|<\vge_0$.
 \end{prop}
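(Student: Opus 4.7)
The plan is to establish the $3k$-inequality \eqref{3k} for $\kP$ with respect to $V$; once this is in hand, Theorem~\ref{thm_3k_implies_eq}(1) gives $k_{P_\vge}^M \asymp \kP$ for $|\vge|$ smaller than the inverse of the $3k$-constant. Set $K:=\supp V$, fix $z_0\in K$, and let $T$ be the threshold in the doubling hypothesis \eqref{assumption1}. I would split the time integral into a short-time regime $0<t\le T$ and a long-time regime $t>T$, which are controlled by genuinely different mechanisms.

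For short times, since $V\in L^p_{\loc}$ with $p>N/2$ and $\supp V=K$ is compact, $|V|\mathbf{1}_K$ belongs to the parabolic Kato class associated with $P$ on a relatively compact neighbourhood $U\Supset K$. Local Gaussian upper bounds for $\kP$ on $U\times U\times(0,T]$ (available from the smoothness of the coefficients) combined with H\"older's inequality in $z$ (exploiting $p>N/2$) yield a restricted $3k$-bound
\[
\int_0^t\!\!\int_K \kP(x,z,t-s)\,|V(z)|\,\kP(z,y,s)\,\dz\,\ds \le C(T)\,\kP(x,y,t), \qquad 0<t\le T,
\]
using neither doubling nor Davies-Harnack but only classical Kato-class estimates (compare Proposition~\ref{thm3kbounded}).

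For long times the pivotal step is a uniform on-$K$ Harnack comparison
\begin{equation*}
\kP(x,z,\tau)\asymp \kP(x,z_0,\tau), \qquad \forall\, x\in M,\ z\in K,\ \tau\ge T, \tag{$\dagger$}
\end{equation*}
which I would prove via the semigroup factorisation $\kP(x,z,\tau)=\int_M \kP(x,w,\tau/2)\,\kP(w,z,\tau/2)\,\mathrm{d}w$ and Davies-Harnack (Lemma~\ref{harnack}) applied on $K$: for $w\in K$ the ratio $\kP(w,z,\tau/2)/\kP(w,z_0,\tau/2)$ is bounded uniformly by Lemma~\ref{harnack}, while for $w$ away from $K$ the same ratio is controlled by smoothness of $\kP(w,\cdot,\tau/2)$ provided $\mathrm{diam}(K)$ is small. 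Integration in $w$ then delivers $(\dagger)$ uniformly in $x\in M$. Symmetry in the roles of $x$ and $y$ yields the analogous comparison in the $y$-variable, and combining the two reduces the integrand to $\|V\|_{L^1(K)}\,\kP(x,z_0,t-s)\,\kP(z_0,y,s)$ (up to constants) on the bulk range $s,t-s\ge T$; the end pieces $s\in[0,T]\cup[t-T,t]$ are folded back into the short-time estimate after shifting the time origin via Chapman-Kolmogorov.

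It then remains to integrate $\kP(x,z_0,t-s)\,\kP(z_0,y,s)$ over $s\in[T,t-T]$ and bound the result by $O(1)\kP(x,y,t)$ rather than the naive $O(t)\kP(x,y,t)$. This is where the doubling hypothesis \eqref{assumption1} enters decisively: combined with the smallness of $\mathrm{diam}(K)$, a Gaussian-style analysis (mirroring the explicit computation for the Laplacian on $\mathbb{R}^N$, $N\ge 3$, where the bulk integrand scales like $\mathrm{diam}(K)^2\,\kP(x,y,t)$) yields a finite $3k$-constant that moreover shrinks with $\mathrm{diam}(K)$. The main obstacle throughout is step $(\dagger)$: Davies-Harnack in the form of Lemma~\ref{harnack} only compares $\kP$ between points of a fixed compact set, and promoting it to a ratio estimate in $z$ that is \emph{uniform} as $x$ ranges over all of $M$ relies on both the symmetry of $P$ (to justify the semigroup factorisation and the use of Davies-Harnack in the auxiliary variable $w$) and the small-diameter hypothesis on $\supp V$ (to control the $z$-oscillation pointwise in $w$). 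Once $(\dagger)$ is secured, the doubling condition efficiently handles the temporal integration.
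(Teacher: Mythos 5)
Your overall architecture (short-time/long-time split, H\"older with $p>N/2$ for the singular potential, doubling for large $t$) matches the paper's, but the pivotal step $(\dagger)$ is false, and this breaks the long-time argument. Already for the Gauss--Weierstrass kernel on $\R^N$ one has
\[
\frac{\kP(x,z,\tau)}{\kP(x,z_0,\tau)}=\exp\Big(\frac{|x-z_0|^2-|x-z|^2}{4\tau}\Big),
\]
and taking $x=Re$, $z_0=0$, $z=\epsilon e$ the exponent is $\sim R\epsilon/(2\tau)$, which is unbounded as $R\to\infty$ for fixed $\tau\ge T$ no matter how small $\mathrm{diam}(K)=\epsilon$ is. Davies--Harnack (Lemma~\ref{harnack}) compares $\kP(x,y,t)$ only when \emph{both} arguments range over a fixed compact set; it cannot be promoted to a ratio estimate in $z$ that is uniform as $x$ ranges over all of $M$, and neither smoothness of $\kP(w,\cdot,\tau/2)$ nor the semigroup factorisation rescues this (an elliptic-type Harnack at a fixed time simply does not hold for $w$ far from $K$). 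The correct mechanism for passing from $x,y$ near $K$ to all of $M$ is the paper's Lemma~\ref{maximum}: the function $U_y(x,t)=\int_0^t\int_K \kP(x,z,t-s)|V(z)|\kP(z,y,s)\dz\ds$ is caloric in $x$ outside $K$, vanishes at $t=0$, and is dominated by $C\kP(\cdot,y,\cdot)$ on $\partial A\times(0,\infty)$; the generalized maximum principle on an exhaustion then propagates the bound to $M\setminus A$, and a dual argument (in $y$, for the adjoint equation) handles the second variable. Your short-time estimate has the same issue: local Gaussian bounds on $U\times U\times(0,T]$ give the bound only for $x,y\in U$, and the extension to all of $M$ again requires this comparison lemma rather than any pointwise kernel comparison.

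A second, independent gap is in the bulk long-time estimate. You never identify why $\int_0^{t}\kP(z,y,s)\kP(x,z,t-s)\ds$ is $O(1)\kP(x,y,t)$ rather than $O(t)\kP(x,y,t)$; appealing to a ``Gaussian-style analysis'' begs the question on a general manifold. The actual mechanism is subcriticality: for $x,y,z$ in the compact set $A$ one bounds $\kP(x,z,t-s)\le \kP(x,x,t-s)^{1/2}\kP(z,z,t-s)^{1/2}\le C\,\kP(x_0,x_0,t/2)$ (symmetry, monotonicity of the on-diagonal kernel, Davies--Harnack on $A$), so the $s$-integral collapses to $\int_0^\infty\kP(z,y,s)\ds=G_P^M(z,y)\asymp|z-y|^{2-N}$, which is integrable against $|V|\in L^p$, $p>N/2$, by H\"older; the doubling condition and Davies--Harnack then convert $\kP(x_0,x_0,t/2)$ back into $\kP(x,y,t)$. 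Without invoking the finiteness of the Green function your long-time bound has no source of integrability in $s$. (The smallness of $\mathrm{diam}(\supp V)$ is needed only so that Kannai's short-time asymptotics give two-sided Gaussian comparability on $A\times A\times(0,\delta_0)$; it plays no role at large times.)
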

\subsection{Short time asymptotic}
One of the key steps of the proof of the $3k$-inequality of Proposition~\ref{mainprop} relies on  the local short time asymptotic of the heat kernel $k_{P}^{M}(x, y, t).$
Recall that two-sided short time {\em estimates} of the heat kernel have been extensively studied in the past forty years. However, for our purpose, we need the local short time {\em asymptotic} of the heat kernel which is given
by the following theorem of Y.~Kannai \cite{YK} (see also \cite{MP} for the result in the compact case). For the global analogues result see Section~4 of \cite[Theorem 4.1]{YK}, and for subsequent developments of the these results see \cite{BER,CHA,TIN}.
\begin{lemma}[\cite{YK}]\label{shorttime}
Assume that an elliptic operator $P$ either of the form \eqref{P} or \eqref{operator} with $C^\infty$-coefficients is defined on a smooth noncompact manifold $M$. Let $d(x,y)$ be the Riemannian distance induced by the principal part of the operator $P$.

For any relatively compact set $K \subset M \times M,$ there is a $\delta > 0$
and  smooth functions $H_{n}(x, y)$, $n=0,1,\ldots$, defined on $K$ such that the following asymptotic expansion
\begin{equation}\label{eqshort}
\kP(x, y, t) \sim \left( \frac{1}{4 \pi t} \right)^{\frac{N}{2}} \mathrm{e}^{\frac{- d(x, y)^2}{4t}} \sum_{n = 0}^{\infty} H_{n}(x, y) t^{n}
\end{equation}
holds locally uniformly as $t \rightarrow 0$ in $K$, whenever $d(x, y) < \delta$. Moreover,
\[
H_{0}(x, y) > 0 \ \ \mbox{and} \ H_{0}(x, x) = 1.
\]
In particular, for small enough $t>0$, and for $x, y$ in a small compact set in $M \times M$,  we have
\[
\kP(x, y, t) \asymp \left( \frac{1}{4 \pi t} \right)^{\frac{N}{2}} \mathrm{e}^{\frac{- d(x, y)^2}{4t}}.
\]
\end{lemma}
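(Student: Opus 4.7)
The plan is to follow the classical Hadamard–Minakshisundaram–Pleijel parametrix construction, adapted to the non-self-adjoint operator $P$ as in Kannai's original argument. I begin with the ansatz
\[
E_n(x,y,t) := \left( \frac{1}{4 \pi t} \right)^{N/2} \mathrm{e}^{-d(x,y)^2/(4t)} \sum_{k = 0}^{n} H_{k}(x, y) \, t^{k},
\]
formally substitute into $(\partial_t + P_x) E_n = 0$, and equate coefficients of successive powers of $t$. This yields a sequence of first-order linear PDEs (transport equations) for the unknown amplitudes $H_k$: the leading equation is an eikonal-type transport equation for $H_0$, while each subsequent $H_k$ is determined from $H_{k-1}$ by a transport equation with source term $P H_{k-1}$.

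Next, I solve these transport equations in a tubular neighborhood of the diagonal. Working in Riemannian normal coordinates centered at $y$ (with respect to the metric $\mathfrak{g}$ induced by the principal part of $P$), the equation for $H_0$ reduces to a linear ODE along radial geodesics. Its unique solution with initial condition $H_0(y,y) = 1$ can be written explicitly in terms of the volume density and the subprincipal part of $P$, and is smooth and positive whenever $d(x,y)$ is smaller than the injectivity radius. Here the hypothesis $d(x,y) < \delta$, with $\delta = \delta(K)$, ensures geodesic uniqueness and the smoothness of $H_0$ on $K$; the higher amplitudes $H_k$ are then produced by quadratures along the same geodesics.

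With the parametrix $E_n$ in hand, a direct computation gives
\[
R_n := (\partial_t + P_x) E_n = \left( \frac{1}{4 \pi t} \right)^{N/2} \mathrm{e}^{-d(x,y)^2/(4t)} \, t^{n} \, (P_x H_n)(x,y),
\]
which is $O\!\left( t^{\,n - N/2} \mathrm{e}^{-d(x,y)^2/(4t)} \right)$. Multiplying $E_n$ by a spatial cutoff supported in the tubular neighborhood and then applying Duhamel's principle with Levi's iteration produces a genuine fundamental solution $\tilde{k}$ of $\partial_t + P$ whose difference from $E_n$ is $O\!\left( t^{\,n + 1 - N/2} \mathrm{e}^{-d(x,y)^2/(4t)} \right)$, locally uniformly on $K$. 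Choosing $n$ arbitrarily large yields the full asymptotic expansion \eqref{eqshort}.

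Finally, I identify $\tilde{k}$ with the minimal Dirichlet heat kernel $k_P^M$. Exhausting $M$ by relatively compact smooth subdomains $M_j \Subset M_{j+1}$ with $\bigcup_j M_j = M$, the parabolic maximum principle gives $k_P^{M_j}(x,y,t) \leq \tilde{k}(x,y,t) + \varepsilon_j(t)$ on a small neighborhood of $(y,y)$, where $\varepsilon_j$ collects exponentially small boundary contributions; passing to the limit $j \to \infty$ and using monotone convergence produces $k_P^M \leq \tilde{k}$ up to the stated remainder. The reverse inequality follows from the positivity $H_0(x,x) = 1 > 0$ together with the uniqueness of solutions of the Cauchy problem with $\delta_y$ initial data, after controlling the discrepancy outside the cutoff region by the off-diagonal short-time Gaussian upper bound. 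The main obstacle I anticipate is precisely this last identification step: one must argue that the locally constructed parametrix captures the correct \emph{minimal} heat kernel on the noncompact manifold rather than some other fundamental solution. This is handled by combining the cutoff construction with the Duhamel identity on the exhausting family $\{M_j\}$ and the fact that both the parametrix error and the difference $\tilde{k} - k_P^{M_j}$ are lower order as $t \to 0$, so the leading asymptotic in \eqref{eqshort} and hence the two-sided equivalence $k_P^M(x,y,t) \asymp (4\pi t)^{-N/2} \mathrm{e}^{-d(x,y)^2/(4t)}$ on compact sets in $M \times M$ for small $t$ both survive the passage to the limit.
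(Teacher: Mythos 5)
The paper does not prove this lemma at all: it is quoted verbatim from Kannai \cite{YK} (with \cite{MP} for the compact case), so there is no internal proof to compare against. Your sketch is precisely the classical Hadamard--Minakshisundaram--Pleijel parametrix construction with Levi iteration that underlies the cited result, and it is essentially correct as a plan. The one point to fix is the appeal to ``uniqueness of solutions of the Cauchy problem with $\delta_y$ initial data'': on a general noncompact manifold such uniqueness can fail, and it is not needed here --- the correct (and sufficient) route is the one you also describe, namely comparing with the Dirichlet heat kernels $k_P^{M_j}$ on the exhaustion, for which minimality gives $k_P^{M_j}\leq k_P^{M}\leq \tilde{k}$ while the ``not feeling the boundary'' principle makes the discrepancy between $k_P^{M_j}$ and the cutoff parametrix exponentially small in $1/t$ for $x,y$ compactly inside $M_j$.
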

Next, we state and prove another key ingredient for the proof of the $3k$-inequality.
\begin{lem}\label{maximum}
Let $V \in L^{p}(M), p > \frac{N}{2}$ be a potential with compact support $K$, and let $A$ be a bounded domain with a smooth boundary such that $K\Subset A$. Assume that there exists a constant $C > 0$ such that
\begin{equation}\label{eqmaximum}
\int_{0}^{t} \int_{K} \kP(x, z, t-s) |V(z)| \kP(z, y, s) \dz \ds \leq C \kP(x, y, t),
\end{equation}
for any  $x, y \in A$, and $ t > 0$. Then
\begin{equation}\label{eqmaximum1}
\int_{0}^{t} \int_{K} \kP(x, z, t-s) |V(z)| \kP(z, y, s) \dz \ds \leq C \kP(x, y, t),
\end{equation}
for any $x, y \in M,$ and $ t > 0.$
\end{lem}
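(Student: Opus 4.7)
The plan is to extend the inequality from $A$ to all of $M$ in two stages---first in the $x$-variable and then in the $y$-variable---by applying the parabolic maximum principle on the unbounded exterior $M \setminus \bar A$.

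First I would use continuity of $k_P^M(x,y,t)$ in $(x,y)$ for $t>0$ to extend the hypothesis \eqref{eqmaximum} from $x,y\in A$ to $x,y\in \bar A$. Fixing $y\in\bar A$ and letting $u(x,t)$ denote the left-hand side of \eqref{eqmaximum}, direct differentiation together with $\partial_\tau k_P^M(x,z,\tau)=-P_x k_P^M(x,z,\tau)$ and $k_P^M(x,z,\tau)\to\delta_z(x)$ as $\tau\to 0^+$ (distributionally) yields
\[
(\partial_t+P_x)u(x,t)=|V(x)|\,k_P^M(x,y,t) \qquad \text{on } M\times(0,\infty),
\]
with $u(\cdot,0)=0$. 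Since $V$ is supported in $K\Subset A$, the function $u$ is therefore $P$-parabolic on $(M\setminus\bar A)\times(0,\infty)$, as is $v(x,t):=C\,k_P^M(x,y,t)$.

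Next I would compare $u$ and $v$ on $(M\setminus\bar A)\times(0,\infty)$. On the parabolic boundary one has $v(x,0)=C\delta_y(x)=0=u(x,0)$ for $x\in M\setminus\bar A$ (since $y\in\bar A$), and $u\le v$ on $\partial A\times(0,\infty)$ by the previous step. The parabolic comparison principle then gives $u\le v$ on $(M\setminus\bar A)\times(0,\infty)$, so \eqref{eqmaximum1} holds for all $x\in M$ and $y\in\bar A$. For the extension in $y$, I fix $x\in M$ (now arbitrary) and note that the symmetry $k_P^M(z,y,s)=k_P^M(y,z,s)$ (coming from the symmetry of $P$) makes the analogous computation yield $(\partial_t+P_y)u=|V(y)|\,k_P^M(x,y,t)$, so $u$ is $P$-parabolic in $(y,t)$ off $\bar A$ as well. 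Repeating the comparison with the roles of $x$ and $y$ interchanged extends the inequality to all $y\in M$.

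The main technical obstacle will be the comparison principle itself on the noncompact set $M\setminus\bar A$: one must propagate ``$u\le v$ on $\partial A$'' to ``$u\le v$ outside $\bar A$'' without any a priori control of the two functions at infinity. Two standard routes are available. One can exhaust $M$ by relatively compact smooth domains $M_n$ with $\bar A\Subset M_n$ and argue on the bounded sets $M_n\setminus\bar A$ with classical maximum principles, then pass to the limit using the monotone convergence $k_P^{M_n}\nearrow k_P^M$. More transparently, Dynkin's formula represents $u$ and $v$ on $(M\setminus\bar A)\times(0,\infty)$ as $E^x[w(X_{\tau_{\bar A}},t-\tau_{\bar A});\,\tau_{\bar A}<t]$ (with $w=u$, resp.\ $w=v$) in terms of the first hitting time $\tau_{\bar A}$ of $\bar A$ by the diffusion associated with $P$; pointwise inequality of their traces on $\bar A$ then propagates outward. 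The symmetry of $P$ is essential at the second stage, as it is what makes $u$---naturally parabolic in $(x,t)$---also parabolic in $(y,t)$.
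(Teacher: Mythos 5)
Your proposal follows essentially the same route as the paper: a two-stage extension, first in $x$ and then in $y$, comparing the iterated kernel with $C\,k_P^M$ on $(M\setminus\bar A)\times(0,\infty)$ via the generalized maximum principle, justified exactly as in your first route by an exhaustion $\{M_n\}$ (the paper truncates both the heat kernel and the iterated integral to $M_n$, which forces an extra $\vge$ in the boundary inequality on $\partial A$ before passing to the limit). The only difference is that for the $y$-stage the paper works with the formal adjoint $P^\star$, so no symmetry of $P$ is needed, whereas you invoke symmetry; this is harmless in the symmetric setting in which the lemma is actually applied.
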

\begin{proof}
Following common practice, in the sequel, the letter $C$ will denote an irrelevant positive constant, the value of which might change from line to line, and even in the same line.

Fix $y \in A$, and define
\begin{equation} \label{solution1}
U_{y}(x, t): = \int_{0}^{t} \int_{K} \kP(x, z, t-s) |V(z)| \kP(z, y, s) \dz \ds .
\end{equation}
By \eqref{eqmaximum},
$$U_{y}(x,t) \leq C \kP(x, y, t)\qquad  \forall  x\in \partial A \mbox{ and } t > 0.$$
Moreover,  $U_{y}$ is a solution of the equation
$$
\frac{\partial }{\partial t} U_{y} + P U_{y} =
  |V(x)| \kP(x,y, t) \qquad  x\in M \mbox{ and }  t>0.
$$
In particular, $\frac{\partial }{\partial t} U_{y} + P U_{y} =0$ for all $x$ outside $K$ and $t>0$.


Let $\{ M_{n} \}_{n=0}^\infty$ be an exhaustion of $M$ such that $A\subset M_{0}$, and set
\[
 U_{y,n}(x, t) := \int_{0}^{t} \int_{A} k_{P}^{M_{n}}(x, z, t-s) |V(z)| k_{P}^{M_{n}}(z, y, s) \dz \ds,
 \]
 where $k_{P}^{M_{n}}(x, y, t) $ is the Dirichlet  heat kernel of $P$ on  $M_{n}$.


Recall that as a function of $x$, the heat kernel  $k_{P}^{M_{n}}(x, y, t)$ satisfies the equation $ \frac{\partial }{\partial t} k_{P}^{M_{n}} + P k_{P}^{M_{n}} = 0 $ in $M_n\times (0,\infty)$. Moreover, since $U_{y,n}$ and $k_{P}^{M_{n}}$ converges locally uniformly to $U_{y}$ and $k_{P}^{M}$, respectively,  it follows that for any  $\vge>0$ there is $N_\vge$, such that for any $n\geq N_\vge$
 $$U_{y,n}(x) \leq (C+\vge) k_{P}^{M_{n}}(x, y, t)\qquad  \forall  x\in \partial A \mbox{ and } t>0.$$
Therefore, for such $n$, we have
  $$
\left \{\begin{array}{lllll}
\frac{\partial}{\partial t} U_{y,n} + P U_{y,n} = 0 \quad &\mbox{ in }   (M_{n} \setminus A)\times (0,\infty),\\[2mm]
U_{y,n} \leq (C+\vge) k_{P}^{M_{n}}  &\mbox{ on }   \partial A \times (0,\infty), \\[2mm]
U_{y,n} = 0 & \mbox{ on }  \partial M_{n}\times (0,\infty),\\[2mm]
U_{y,n} = 0 & \mbox{ on }   (M_{n}\setminus A)\times \{0\}.
 \end{array}\right.$$
The generalized maximum principle implies that
 \[
 U_{y,n} \leq (C+\vge) k_{P}^{M_{n}} \ \ \mbox{ on } (M_{n} \setminus A)\times (0,\infty),
 \]
Letting $n\to \infty$ we arrive at
 \begin{equation}\label{eq3k6}
 U_{y} (x, t) \leq C \kP(x, y, t) \qquad \forall x \in M,  y \in A \mbox{ and } t > 0.
 \end{equation}
 Next,  we fix $x \in M$ and define for $y \in M$
  \[
 U_{x}^\star(y, t): = \int_{0}^{t} \int_{A} \kP(x, z, t-s) |V(z)| \kP(z, y, s) \dz \ds.
 \]
Then as a function of $y$, $U_{x}^\star$ is a solution of the equation
$$
\frac{\partial }{\partial t} U_{x}^\star + P^\star U_{x}^\star =
  |V(y)| \kP(x, y, t) \qquad  y \in M \mbox{ and }  t > 0.
$$
  In particular, $\frac{\partial }{\partial t} U_{x}^\star + P^\star U_{x}^\star =0$ for all $y$ outside $A$.


Since $U_{x}^\star(y, t)= U_{y}(x, t)$, estimate \eqref{eq3k6} implies
 $$U_{x}^\star(y, t)  \leq C k_{P}^{M}(x, y, t) \qquad \forall y \in  A \mbox{ and } x \in M.$$
Hence, the above exhaustion and comparison arguments finally imply
 \begin{equation}\label{eqmaximum3}
\int_{0}^{t} \int_{A} \kP(x, z, t-s) |V(z)| \kP(z, y, s) \dz \ds \leq C \kP(x, y, t),
\end{equation}
for any $x, y \in M,$ and $ t > 0.$
\end{proof}
Having proven Lemma~\ref{maximum}, we turn to the proof the $3k$-inequality.
\begin{proof}[Proof of Proposition~\ref{mainprop}] By Lemma~\ref{maximum}, it is sufficient to prove the $3k$-inequality for
for all $x,y\in A$  and all $t > 0 $. So, it is enough to prove the existence of a constant $C>0$ such that
\begin{equation}\label{SV}
S(V, x, y, t) := \int_{0}^{t} \int_{M} \frac{\kP(x, z, t-s) \kP(z, y, s)}{\kP(x, y, t)} |V(z)| \dz \ds \leq C
\end{equation}
for all $x, y \in A$  and all $t > 0$.


The proof is divided into several steps.  We fix an arbitrary small $\gd_{0} > 0$ (to be chosen later),  and prove the boundedness of $S(V, x, y, t)$ in two separate regions;
$t \geq \gd_{0}$ and $0 < t < \gd_{0}$.

\medskip

{\bf{Step 1}:} In this step we estimate \eqref{SV} when $ t \geq \gd_{0} $ and $x, y  \in A,$ where $A$ is smooth compact subset of M containing $K=\mbox{supp}\, V$.  Fix  $0 < \delta < \frac{\gd_{0}}{2}$.  Fubini's theorem yields,
\begin{align}\label{I1I2}
\nonumber & \int_{0}^{t} \int_{M} \kP(x, z, t-s) \kP(z, y, s) |V(z)| \dz \ds  \\ \nonumber & = \int_{A} \left( \int_{0}^{\delta}  \kP(x, z, t-s) \kP(z, y, s)  \ds \right)  |V(z)| \dz \\
& +  \int_{A} \left( \int_{\delta}^{t} \kP(x, z, t-s) \kP(z, y, s)  \ds \right)   |V(z)| \dz.
\end{align}
 Consider the first term of \eqref{I1I2}, namely,
$$I _{1}^{\delta }   := \int_{A} \left(  \int_{0}^{\delta}    \kP(x, z, t-s) \kP(z, y, s)  \ds  \right)  |V(z)| \dz.$$
Since $t>\gd_0$, we have for $0<s<\gd$
$$\gd< \frac{t}{2}< t - \delta < t - s <t.$$
Hence, in light of parts (5) and (6) of Lemma \ref{properties}, and Davies-Harnack inequality \eqref{eqharnack},  we obtain
\begin{align*}
I _{1}^{\delta } & \!\leq \! \int_{A}\!\! \left( \!\int_{0}^{\delta}
\!\! (\kP(x, x, t-s))^{\frac{1}{2}}  (\kP(z, z, t-s))^{\frac{1}{2}} \kP(z, y, s) \!\ds \!\!  \right)  |V(z)| \!\dz \\
 & \leq C \left(\kP(x, x, \frac{t}{2}) \right)^{\frac{1}{2}} \left( \kP(y, y, \frac{t}{2}) \right)^{\frac{1}{2}}  \int_{A}  \left( \int_{0}^{\infty} \kP(z, y, s) \ds \right) |V(z)| \dz.
\end{align*}
Using our assumption that $P$ is subcritical in $M$, the Davies-Harnack inequality \eqref{eqharnack}, and the doubling condition \eqref{assumption1}, we get
\begin{align}\label{impstep}
I _{1}^{\delta } \leq  C \left( (\kP(x_0, x_0, t) \right)^{\frac{1}{2}} \left( \kP(x_0, x_0, t) \right)^{\frac{1}{2}}  \int_{A} G_P^M(z, y) |V(z)| \dz,
\end{align}
where $G_P^M$ is the Green function of the operator $P$ in $M$. Consequently,  the Davies-Harnack inequality \eqref{eqharnack} for the heat kernel implies
\begin{align}\label{impstepn}
I _{1}^{\delta } \leq  C(\gd,A) \kP(x, y, t)   \int_{A} G_P^M(z, y) |V(z)| \dz \qquad \forall x, y \in A, \; t>\gd.
\end{align}
On the other hand, the well known behaviour of the Green function near singularity, and the local elliptic Harnack inequality imply that there exist a positive constant $C$ such that
 \[
C^{-1} |z-y|^{2- N} \leq G(z,y) \leq C|z-y|^{2 - N} \qquad \forall z,y\in A.
\]
Hence, the H\"older inequality with $p > N/2$ and $p'=p/(p-1)$ yields
\begin{multline}\label{G_est}
\int_{A} G(z, y) |V(z)| \dz
\leq C \left( \int_A |y -z|^{(2-N){p'}} \dz \right)^{\frac{1}{p'}} \left( \int_{A} |V|^{p} \dz \right)^{\frac{1}{p}} \\[2mm]  \leq C(K,p,N)\|V\|_p \qquad \forall  y\in A.
\end{multline}
 Hence, by substituting \eqref{G_est} into \eqref{impstepn} we obtain
\begin{equation}\label{impstep1}
I _{1}^{\delta } \leq  C  \kP(x, y, t) \qquad \forall x, y \in A \mbox{ and } t \geq \gd_{0}.
\end{equation}
where the constant $C$ depends on $\delta, A, p, N, \|V\|_p $.

Next, consider the second term of \eqref{I1I2}, namely,
\begin{equation*}
I_{2}^{\delta} := \int_{A} \left( \int_{\delta}^{t}    \kP(x, z, t-s) \kP(z, y, s)  \ds   \right) |V(z)| \dz.
\end{equation*}
Acting as for $I_{1}^{\delta}$ we obtain
 \begin{align*}
 I_{2}^{\delta}  & \!\leq \!\!\int_{A} \!\left( \! \int_{\delta}^{t/2}  \!\!\kP(x, x, t-s) ^{\frac{1}{2}}   \kP(z, z, t-s)^{\frac{1}{2}}  \kP(z, y, s)  \ds \right) |V(z)| \dz \\+
& \int_{A} \left( \int_{t/2}^{t} \left( \kP(z, z, s) \right)^{\frac{1}{2}}  \left( \kP(y, y, s) \right)^{\frac{1}{2}}  \kP(x, z, t-s)  \ds  \right) |V(z)| \dz \\
 & \leq C \left( \kP(x, x, \frac{t}{2}) \right)^{\frac{1}{2}} \left(\kP(y, y, \frac{t}{2}) \right)^{\frac{1}{2}} \int_{A} G(z, y) |V(z)| \dz \\
 &+
  C\left( \kP(y, y, \frac{t}{2}) \right)^{\frac{1}{2}}  \left( \kP(y, y, \frac{t}{2}) \right)^{\frac{1}{2}}\int_{A} G(x, z) |V(z)| \dz\\
 & \leq C  \kP(x, y, t)  \int_{A} \left (G(x,z)+G(z, y)\right) |V(z)| \dz.
 \end{align*}
In light of \eqref{G_est}, we obtain
\begin{equation}\label{impstep2}
I _{2}^{\delta } \leq  C  \kP(x, y, t) \qquad \forall x, y \in A \mbox{ and } t \geq \gd_{0}.
\end{equation}
Hence, by adding estimates \eqref{impstep1} and \eqref{impstep2} we obtain
\begin{equation*}
S(V, x, y, t) \leq C \qquad \forall x, y \in A \mbox{ and }  t \geq \gd_{0},
\end{equation*}
where the constant $C$ depends on $\delta, A, p, N,$ and $\|V\|_p $\,.

\medskip

{\bf{Step 2}:} In this step we use our assumption that the diameter of $K:= \mbox{supp} \, V $ is `small enough', and estimate $S(V, x, y, t)$ for  $ t < \gd_{0}$ and $x, y \in A$, where $A$ is a `small' bounded domain with a smooth boundary containing $K$.
We use the short time behaviour of the heat kernel (see Lemma~\ref{shorttime}).

Denote by $g (x,y,t)$ the Gauss-Weierstrass type kernel
\begin{equation}\label{GWtype}
g (x,y,t) := \left( \frac{1}{4 \pi t} \right)^{\frac{N}{2}} \mathrm{e}^{-\frac{d(x,y)^2}{4t}}.
\end{equation}
Due to our assumptions on the smallness of $K$ and the smoothness of $P$ and $M$,  Lemma~\ref{shorttime} implies that there exist $\gd_0>0$ and $C>0$ such that
\begin{equation}\label{small}
C^{-1}g(x,y,t)\leq  \kP(x,y,t)\leq\!  C g(x,y,t) \qquad \forall x,y \in A\mbox{ and } t < \gd_{0}.
\end{equation}
Note that
\begin{equation}\label{property}
g(x,y, t)^{p} = g\Big(x,y, \frac{t}{p}\Big) (4 \pi t)^{\frac{(1 - p)N}{2}}(p)^{-\frac{N}{2}}\qquad \forall x,y,z\in M \mbox{ and } t>0.
\end{equation}
Following \cite{BDS}, and using  \eqref{small}, and \eqref{property}  for $x,y \in A$ and $0<t < \gd_{0}$, we obtain
\begin{multline*}
S(V, x, y, t)  :=  \int_{0}^{t} \int_{A}    \frac{ \kP(x, z, t-s) \kP(z, y, s)}{\kP(x, y, t)}  \  |V(z)|  \dz \ds  \\
 \leq    \int_{0}^{t} \int_{A}\frac{\left[\left(g(x, z, t-s)\right)^{p'} \left(g(z, y, s)\right)^{p'}\right]^{1/p'} |V(z)|}{g(x, y, t)} \dz \ds \\
\leq C  \int_{0}^{t} \left[ \frac{s(t-s)}{t} \right]^{-\frac{N}{2p}} \int_{A} \frac{\left[g\Big(x, z, \frac{t -s}{p'}\Big) g\Big(z, y, \frac{s}{p'}\Big)\right]^{1/p'}|V(z)|\dz}
{g\Big(x, y, \frac{t}{p'}\Big)^{1/p'}}   \ds .
\end{multline*}
Consequently, the H\"older inequality, \eqref{small}, the Chapman-Kolmogorov equation, and our assumption that $p>N/2$ imply that for all $x, y \in A$  and  $t < \gd_{0}$
\begin{multline*}
S(V, x, y, t)\!\leq \!C  ||V||_{p}\!\int_{0}^{t}\! \left[ \!\frac{s(t-s)}{t}\! \right]^{-\frac{N}{2p}}\!\! \dfrac{\left[\int_{A} g\Big(x, z, \frac{t -s}{p'}\Big) g\Big(z, y, \frac{s}{p'}\Big) \dz \!\right]^{1/p'}\!}{g\Big(x, y, \frac{t}{p'}\Big)^{1/p'}}   \ds\\
 \leq C  ||V||_{p}\int_{0}^{t} \left[ \frac{s(t-s)}{t} \right]^{-\frac{N}{2p}} \dfrac{\left[\int_{A} \kP\Big(x, z, \frac{t -s}{p'}\Big) \kP\Big(z, y, \frac{s}{p'}\Big) \dz \right]^{1/p'}}
{\kP\Big(x, y, \frac{t}{p'}\Big)^{1/p'}}   \ds \\[2mm]
 \leq C ||V||_{p}\int_{0}^{t}  \left[ \frac{s(t-s)}{t} \right]^{-\frac{N}{2p}} \ds = C ||V||_{p} t^{1- \frac{N}{2p}} \int_{0}^{1} \sigma^{-\frac{N}{2p}}(1- \sigma)^{-\frac{N}{2p}}  \dsigma \\
 \leq C||V||_{p} B \left(1 - \frac{N}{2p}, 1 - \frac{N}{2p} \right) t^{1- \frac{N}{2p}} \leq C,
\end{multline*}
where $B$ denotes the beta function.

\medskip

{\bf{Step 3}:} Steps 1 and 2, imply that the $3k$-inequality holds for
for all $x,y\in A$  and all $t > 0 $. Hence,  Lemma~\ref{maximum} implies that the $3k$-inequality holds for all $x,y\in M$  and all $t > 0 $. Consequently, Theorem~\ref{thm_3k_implies_eq} implies  that there exists $\vge_0>0$ such that $k_{P_\vge}^M\asymp \kP$ for all $|\vge|<\vge_0$.
\end{proof}
\begin{proof}[Proof of Theorem \ref{main}] (1) Let $V$ be the given potential with a compact support, and let $\{A_i\}_{i=1}^m$ be a finite open covering of $\mathrm{supp}\,V$ by smooth bounded sufficiently `small' domains $A_i$ such that $\kP$ satisfies
\begin{equation*}\label{smalli}
C^{-1}g(x,y,t)\leq  \kP(x,y,t)\leq\!  C g(x,y,t) \quad \forall x,y \in A_i,\;  t < \gd_{0} \mbox{ and } 1\leq i\leq m,
\end{equation*}
where $g$ is the Gauss-Weierstrass type kernel \eqref{GWtype}.

   Let  $\{\chi_i\}_{i=1}^m$ be a smooth partition of unity subordinated to this covering, and let
$V_i(x):=\chi_i(x)V(x)$. Then $V(x):=\sum_{i=1}^m \chi_i(x) V(x)=\sum_{i=1}^m V_i(x)$.

Using Proposition~\ref{mainprop}  $m$-times with $\vge$ small enough, we obtain that
\begin{equation*}
\kP  \asymp   k_{P -\varepsilon V_1}^{M} \asymp \ldots \asymp k_{P - \varepsilon (\sum_{i = 1}^{m -1} V_{i})}^{M}     \asymp k_{P_\varepsilon}^{M}.
\end{equation*}

\medskip

(2) The proof follows immediately  from assertion \mbox{(1)} and Lemma~\ref{lem1}.

\medskip

(3) Since $P-V$ is a subcritical operator with a heat kernel satisfying the doubling condition \eqref{assumption1}, we may apply part (1) of the theorem to the operator $P -V$ to conclude that there exists some $\tilde \varepsilon_{0}$ such that $k^{M}_{P - V} := k^{M}_{P_1} \asymp k^{M}_{P_{(1-\vge)}}$ for
all $|\vge| < \tilde \vge_{0}$ holds true. Therefore, there exist $\alpha$ and $\beta$ such that the hypotheses of Lemma~\ref{global-eq} are satisfied, and hence
$\kP \asymp k^{M}_{P -V}.$

\medskip

(4) Proposition~\ref{thm3kbounded}, Step~1 of the proof of Proposition~\ref{mainprop}, and Lemma~\ref{maximum} imply the $3k$-inequality.
Hence, by Theorem~\ref{thm_3k_implies_eq} there exists $\vge_0>0$ such that $k_{P_\vge}^M\asymp \kP$ for all $|\vge|<\vge_0$. Consequently, assertions (2) and (3) for a bounded compactly supported potential $V$ follow exactly as above.
\end{proof}
Conversely, it turns out that if $V\geq 0$, and $\kP\asymp k_{P+V}^M$, then the $3k$-inequality holds true. Indeed
\begin{prop}\label{cor_3k}
Let $V \geq 0$ and $\kP\asymp k_{P+V}^M$, then the heat kernel $\kP$ satisfies the $3k$-inequality \eqref{3k}.
\end{prop}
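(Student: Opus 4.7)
The plan is to derive the $3k$-inequality directly from a resolvent identity, exploiting the sign of $V$ to turn everything into a monotone convergence/comparison argument rather than a Neumann series one. The key observation is that, since $V \geq 0$, the resolvent equation written for $P$ and $P+V$ has only nonnegative terms, so it can be justified without already knowing a $3k$-bound.

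First I would set up Duhamel's principle on an exhaustion $\{M_j\}$ of $M$. On each bounded subdomain $M_j$, standard parabolic theory yields the identity
\begin{equation*}
k_{P+V}^{M_j}(x,y,t) = k_{P}^{M_j}(x,y,t) - \int_0^t \int_{M_j} k_{P}^{M_j}(x,z,t-s)\, V(z)\, k_{P+V}^{M_j}(z,y,s)\,\mathrm{d}z\,\mathrm{d}s,
\end{equation*}
which is the analogue of \eqref{Duham} with $\varepsilon = -1$. No $3k$-hypothesis is needed here since the domain is bounded and $V$ is locally integrable. Rearranging,
\begin{equation*}
\int_0^t \int_{M_j} k_{P}^{M_j}(x,z,t-s)\, V(z)\, k_{P+V}^{M_j}(z,y,s)\,\mathrm{d}z\,\mathrm{d}s = k_{P}^{M_j}(x,y,t) - k_{P+V}^{M_j}(x,y,t),
\end{equation*}
whose integrand is nonnegative because $V \geq 0$.

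Next I would pass to the limit $j \to \infty$. By the monotonicity of the Dirichlet heat kernel in the domain (part (3) of Lemma~\ref{properties}), $k_P^{M_j} \uparrow k_P^M$ and $k_{P+V}^{M_j} \uparrow k_{P+V}^M$, so the monotone convergence theorem produces the global identity
\begin{equation*}
\int_0^t \int_{M} k_{P}^{M}(x,z,t-s)\, V(z)\, k_{P+V}^{M}(z,y,s)\,\mathrm{d}z\,\mathrm{d}s = k_{P}^{M}(x,y,t) - k_{P+V}^{M}(x,y,t).
\end{equation*}
At this point the equivalence $k_P^M \asymp k_{P+V}^M$ enters: it supplies a constant $c > 0$ such that $c\, k_P^M(z,y,s) \leq k_{P+V}^M(z,y,s)$ everywhere. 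Substituting this lower bound on the left side and dropping the (nonpositive) term $-k_{P+V}^M(x,y,t)$ on the right side yields
\begin{equation*}
c \int_0^t \int_{M} k_{P}^{M}(x,z,t-s)\, V(z)\, k_{P}^{M}(z,y,s)\,\mathrm{d}z\,\mathrm{d}s \leq k_{P}^{M}(x,y,t),
\end{equation*}
which, since $|V| = V$, is exactly the $3k$-inequality \eqref{3k} with constant $C = 1/c$.

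The only delicate point is the first step: justifying the resolvent identity on $M_j$ and then passing to the limit. On $M_j$ this is classical because $P+V$ and $P$ both generate well-defined Dirichlet semigroups with bounded kernels, and the coefficients are locally regular; there is no divergence issue since $V$ is essentially bounded (or at least $L^p_{\loc}$) against a compactly controlled kernel. The global limit is entirely monotone because of the sign condition $V \geq 0$, which is what makes this converse so clean compared to the signed case.
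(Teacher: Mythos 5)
Your proposal is correct and follows essentially the same route as the paper: write the resolvent (Duhamel) identity for $k_{P+V}^{M}$, observe that every term in the perturbation integral is nonnegative, and then use the lower bound $c\,k_{P}^{M}\leq k_{P+V}^{M}$ from the equivalence to replace $k_{P+V}^{M}$ by $k_{P}^{M}$ inside the integral. The only difference is that the paper obtains the global resolvent equation by citing part (3) of Theorem~\ref{thm_3k_implies_eq}, whereas you rederive it via exhaustion and monotone convergence (valid precisely because $V\geq 0$), which is a clean and self-contained way to justify that step.
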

\begin{proof}
 Since $\kP\asymp k_{P+V}^M$, part~\mbox{(3)} of Theorem~\ref{thm_3k_implies_eq} implies that $k_{P+ V}^M$ satisfies the resolvent equation
\begin{multline*}
k_{P+  V}^{M}(x, y ,t)  \\= \kP(x, y, t) -  \int_{0}^{t} \int_{M} \kP(x, z, t-s) V(z) k_{P+  V}^{M} (z, y, s) \dz \ds.
\end{multline*}
Hence,
\begin{multline*}
\int_{0}^{t} \int_{M} \kP(x, z, t-s) V(z) k_{P}^{M} (z, y, s) \dz \ds\\ \leq   C\int_{0}^{t} \int_{M} \kP(x, z, t-s) V(z) k_{P+ V}^{M} (z, y, s) \dz \ds \leq C\kP(x, y, t)
\end{multline*}
for all $x,y\in M$ and $t>0$.
\end{proof}
\section{Small perturbation and the proof of Theorem~\ref{spmain}}\label{smp}
In the present section we introduce the class of {\em small perturbations} (see Definition~\ref{def_sp}), and prove Theorem~\ref{spmain} that extends Theorem~\ref{main} from the class of compactly supported perturbations to the class of small perturbations. In the context of Green functions the notion of small perturbation was introduced in \cite{YP1} and then was extended to the notion of semismall perturbation by M.~Murata in \cite{MM1} (see \cite{M-H_M,MM1,YP99} and references therein for some applications). Similarly to the elliptic case,  we study here the properties of small perturbations with respect to the heat kernel $\kP$.


Let $\{ M_{n}\}_{n = 0}^{\infty}$ be an exhaustion of $M$ as in the proof of Theorem~\ref{thm_3k_implies_eq}, and denote $M_{n}^{*} := M \setminus \overline{M_{n}}$.
Let $V$ be a given potential, and  $\{\Phi_{n}\}_{n = 0}^{\infty}$ be a sequence of smooth cutoff functions subordinated to the exhaustion $\{ M_{n}\}$ satisfying
$$
\Phi_{n}(x) =\left\{
               \begin{array}{ll}
                 1 & \hbox{if } x \in M_{n}, \\[2mm]
                 0 & \hbox{if } x \in M_{n+1}^* ,
               \end{array}
             \right.
$$
and $0 \leq \Phi_{n} \leq 1.$ Set $V_{n}(x) := \Phi_{n}(x) V(x)$ and $W_{n}(x) := V(x) - V_{n}(x).$
\begin{defi}\label{def_sp}{\em
We say that $V$ is a {\em small (resp., semismall) perturbation} with respect to the heat kernel $\kP$ if
\begin{equation}\label{smallperturbation}
\lim_{n \rightarrow \infty} \left\{ \sup_{\substack{ x, y \in M_{n}^{*}\\t>0}} \int_{0}^{t} \int_{M_{n}^{*}} \frac{\kP(x, z, t-s) |V(z)| \kP(z, y, s)}{\kP(x, y, t)} \dz \ds \right\} = 0,
\end{equation}
(resp.,
\begin{equation}\label{ssmallperturbation}
\lim_{n \rightarrow \infty} \left\{ \sup_{\substack{ y \in M_{n}^{*}\\t>0}} \int_{0}^{t} \int_{M_{n}^{*}} \frac{\kP(x_0, z, t-s) |V(z)| \kP(z, y, s)}{\kP(x_0, y, t)} \dz \ds \right\} = 0,
\end{equation}
where $x_0$ is a fixed reference point in $M$).
 }
\end{defi}
Clearly, if $V$ is a small perturbation with respect to $\kP$, then  it is also a semismall perturbation with respect to $\kP$ (see Subsection~\ref{subsec_op} for further discussions).


\begin{example}\label{spex1}{\em
Suppose that $P$ is a subcritical operator in $M$. Then a real valued function $V \in L^{p}(M)$, $p > \frac{N}{2}$ with {\em compact support} is a small
perturbation of $P$ with respect to $\kP$.
}
\end{example}
\begin{example}\label{spex2}{\em
Let $P:= -\Delta$ in $\mathbb{R}^{N}, N \geq 3$ and suppose that $V \in L^{p}(\mathbb{R}^{N}) \cap L^{q}(\mathbb{R}^{N})$, where $q < \frac{N}{2} < p$. It follows from \cite{BDS} that $V$ is a small perturbation
with respect to the Gauss-Weierstrass heat kernel \eqref{GW}. For example, for $t>2$ and $x,y\in \R^N$ we have
\begin{align*}
& \int_{0}^{t} \int_{M} \frac{\kP(x, z, t-s) |W_{n}(z)| \kP(z, y, s)}{\kP(x, y, t)} \dz \ds \\
& \leq c_{1}\|W_{n}\|_p \int_{0}^{1} s^{-N/(2p)} \ds + c_{2} \|W_{n}\|_q \int_{1}^{\infty} s^{-N/(2q)} \ds,
\end{align*}
and the dominated convergence theorem  implies that $V$ satisfies \eqref{smallperturbation}.
}
\end{example}
It turns out that under some further assumptions if $V$ is a small perturbation with respect to $\kP$, then $\kP$ satisfies the $3k$-inequality with respect to $V$. We have.
\begin{lem}\label{smallper1}
Suppose that the Riemannian manifold $(M, \mathfrak{g})$, the operator $P$, and its kernel $\kP$ satisfy the assumptions of Theorem~\ref{main}. Let $V\in L^p_\loc(M)$ be a small perturbation with respect to the heat kernel $\kP$, where $p>N/2$.
Then  $\kP$ satisfies the $3k$-inequality \eqref{3k} with respect to $V$.
 \end{lem}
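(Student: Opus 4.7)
My plan is to split $V = V_n + W_n$, where $V_n := \Phi_n V$ has compact support and $W_n := V - V_n$ is supported in $M_n^*$, and prove the $3k$-inequality \eqref{3k} separately for $|V_n|$ and $|W_n|$. Since $|V|\le |V_n|+|W_n|$, adding the two resulting inequalities yields the desired bound for $V$.

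For the compactly supported piece $V_n$, I will cover $\supp V_n$ by finitely many bounded domains $\{A_i\}_{i=1}^m$ of diameter small enough for Proposition~\ref{mainprop} to apply, exactly as in the proof of Theorem~\ref{main}(1). Writing $V_n=\sum_{i=1}^m V_{n,i}$ with $V_{n,i}:=\chi_iV_n$ for a subordinate partition of unity $\{\chi_i\}$, Proposition~\ref{mainprop} supplies the $3k$-inequality on $M\times M\times(0,\infty)$ for each $|V_{n,i}|$ with constant $C_i$, and summing yields the $3k$-inequality for $|V_n|$ with constant $C_n:=\sum_{i=1}^m C_i$.

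The heart of the proof concerns the tail $W_n$. By the definition \eqref{smallperturbation} of small perturbation, given any $\eta>0$ I can fix $n$ so large that
\[
U(x,y,t):=\int_0^t\!\!\int_{M_n^*}\!\!\kP(x,z,t-s)|W_n(z)|\kP(z,y,s)\dz\ds\le \eta\,\kP(x,y,t)
\]
for every $x,y\in M_n^*$ and $t>0$. A direct computation (using $\kP(x,z,0^+)=\delta_z(x)$) shows that as a function of $(x,t)$, $U$ satisfies $(\partial_t+P)U= |W_n(x)|\kP(x,y,t)$ in $M\times(0,\infty)$, which reduces to $(\partial_t+P)U=0$ on $M_n\times(0,\infty)$ since $W_n\equiv 0$ on $M_n$. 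I will extend the bound $U\le\eta\,\kP$ from $M_n^*\times M_n^*$ to $M\times M$ in three stages, in the spirit of Lemma~\ref{maximum} but with the roles of interior and exterior interchanged. Fix $y\in M_n^*$: the established bound gives $U\le \eta\,\kP$ on $\partial M_n\times(0,\infty)$ by continuity; truncating to an exhaustion $\{M_j\}$ with $\overline{M_n}\subset M_0$ and applying the generalized maximum principle on $M_n$ (then passing to the limit $j\to\infty$ via dominated convergence, as in Lemma~\ref{maximum}) extends the bound to $x\in M_n$, so $U\le\eta\,\kP$ on $M\times M_n^*$. Since $P$ is symmetric, the change of variable $s\mapsto t-s$ together with $\kP(a,b,t)=\kP(b,a,t)$ yields $U(x,y,t)=U(y,x,t)$, so the bound also holds on $M_n^*\times M$. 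Finally, for $x,y\in M_n$, the preceding two cases furnish $U\le\eta\,\kP$ on $\partial M_n\times(0,\infty)$, and one more application of the same maximum-principle argument on $M_n$ (with $y\in M_n$ now fixed) gives the bound on $M_n\times M_n$. Thus $U\le \eta\,\kP$ on $M\times M\times(0,\infty)$, which is the $3k$-inequality for $|W_n|$.

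Combining the bounds for $|V_n|$ and $|W_n|$ via $|V|\le |V_n|+|W_n|$ gives the $3k$-inequality \eqref{3k} for $V$ with constant $C_n+\eta$. The main technical obstacle is the parabolic maximum-principle extension through the boundary $\partial M_n$ in the third stage, which must accommodate the singularity of $\kP(\cdot,y,\cdot)$ at $(y,0)$ when $y\in M_n$; this is handled exactly as in the proof of Lemma~\ref{maximum}, via truncation to the exhaustion $\{M_j\}$ and a dominated-convergence argument.
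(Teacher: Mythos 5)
Your proof is correct and follows essentially the same strategy as the paper's: decompose $V=V_n+W_n$, obtain the $3k$-inequality for the compactly supported part from the compact-support machinery of Section~4, and extend the small-perturbation bound for the tail $W_n$ from $M_n^{*}\times M_n^{*}$ to $M\times M$ via the parabolic maximum principle on $M_n$ applied in each variable in turn. The only (harmless) deviations are that you sum the $3k$-inequalities of Proposition~\ref{mainprop} directly over a partition of unity, where the paper instead passes through Theorem~\ref{main}, Lemma~\ref{lem1} and Proposition~\ref{cor_3k}, and that you use the symmetry $U(x,y,t)=U(y,x,t)$ of the iterated kernel, where the paper repeats the comparison argument in the $y$-variable.
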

\begin{proof}
Theorem~\ref{main} and Lemma~\ref{lem1} imply that $\kP \asymp k^{M}_{P + |V_{n+1}|}$ for any $n\in \mathbb{N}$.   Therefore, by Proposition~\ref{cor_3k}, for each $n\in \mathbb{N}$ there exists $C_n>0$ such that
\begin{multline}\label{3kVn}
\int_{0}^{t} \int_{M_{n}} \kP(x, z, t-s)  |V(z)| k_{P}^{M} (z, y, s) \dz \ds \\\leq
\int_{0}^{t} \int_{M} \kP(x, z, t-s)  |V_{n+1}(z)| k_{P}^{M} (z, y, s) \dz \ds
\leq C_n  \kP(x, y, t)
\end{multline}
for all $x,y\in M$ and $t>0$.


On the other hand, by the definition of a small perturbation we have
\begin{equation}\label{spc1}
\! \int_{0}^{t}\!\! \int_{M_n^*}\!\!\! k_{P}^{M}(x,z, t-s) |V(z)| k_{P}^{M}(z,y,s) \! \dz \ds\! \leq \! C k_{P}^{M}(x,y,t)
\end{equation}
for any $x, y \in M_{n}^*$ and $t>0$. So, by adding \eqref{3kVn} and \eqref{spc1}, we see that the $3k$-inequality \eqref{3k} holds true for $x,y\in M_n^*$ and $t>0$.

\medskip

Fix $y \in M_{n}^*$ and for $x\in M_n$ define
\[
U_{y}(x, t) := \int_{0}^{t}\!\! \int_{M_{n + 1}^*} \!\!\! k_{P}^{M}(x,z, t-s) |W_{n+1}(z)| k_{P}^{M}(z,y,s) \! \dz \ds.
\]
By \eqref{spc1} and continuity we have
\[
U_{y}(x, t) \leq \vge \kP(x, y, t) \quad \forall x \in \partial M_{n}, \ t > 0.
\]
On the other hand,
\[
U_{y}(x, 0) = \kP(x, y, 0)=0 \quad \forall x \in  M_{n}.
\]
 Moreover, $U_{y}$ satisfies the equation
\[
\frac{\partial}{\partial t} U_{y} + PU_{y} = |W_{n+1}(x)|\kP(x, y, t), \quad x\in M, \ t> 0.
\]
In particular,  $$\frac{\partial}{\partial t} U_{y} + PU_{y} = 0$$ for all $x \in M_{n}$.
Furthermore, the heat kernel $\kP(x, y, t)$, as a function $x$, also satisfies the equation
$$\frac{\partial}{\partial t} \kP+ P \kP= 0$$ for all $x \in M_{n}$. The  generalized maximum principle in $M_n$ implies that for any $y\in M_n^*$
\begin{equation}\label{eq_81}
U_{y}(x, t) \leq \vge \kP(x, y, t) \quad \forall x \in M_{n}, \ t > 0.
\end{equation}
Hence, taking into account \eqref{3kVn} it follows that the $3k$-inequality \eqref{3k} holds true for $x\in M_n$ and $y\in M_n^*$. The same argument shows that the $3k$-inequality holds true for
$y\in M_n$ and $x\in M_n^*$.

Suppose that $x, y \in M_{n}$. Then a similar comparison argument in $M_n$ shows that
\begin{equation}\label{sp2}
\int_{0}^{t}\!\! \int_{M_{n + 1}^*} \!\!\! k_{P}^{M}(x,z, t-s) |W_{n+1}(z)| k_{P}^{M}(z,y,s) \! \dz \ds \leq C \kP(x, y, t)
\end{equation}
for all $x, y \in M_{n}$ and $t>0$. Once again, taking into account \eqref{3kVn} it follows that the $3k$-inequality \eqref{3k} holds true also for $x\in M_n$ and $y\in M_n$.
Thus, the lemma is proved.
\end{proof}
In light of Part~\mbox{(3)} of Theorem~\ref{thm_3k_implies_eq} we obtain
\begin{cor}\label{cor_r_eq}
Suppose that $V$ is a (semi)small perturbation with respect to $P$ in $M$.
If $k_{P_\vge}^M \asymp k_{P}^M$ for some $\vge\in \R$, then  the heat kernel $k_{P_\vge}^M$ satisfies the resolvent equations \eqref{r_eq3}.
\end{cor}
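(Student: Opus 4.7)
Proof proposal. The strategy is to reduce Corollary~\ref{cor_r_eq} to part~(3) of Theorem~\ref{thm_3k_implies_eq}, using Lemma~\ref{smallper1} as the bridge; the semismall case requires only a cosmetic adaptation of that lemma.

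In the small-perturbation case the reduction is immediate. By Lemma~\ref{smallper1} the heat kernel $\kP$ satisfies the $3k$-inequality \eqref{3k} with respect to $V$, so the hypothesis of part~(1) of Theorem~\ref{thm_3k_implies_eq} is in force with $T=\infty$, and the assumed global equivalence $k_{P_\vge}^M \asymp \kP$ is exactly the conclusion \eqref{eq_eqiv4} there. Part~(3) of the same theorem then delivers the resolvent identities \eqref{r_eq3} for every $(x,y,t) \in M \times M \times (0,\infty)$.

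For the semismall case, the proof of Lemma~\ref{smallper1} transplants with only cosmetic changes to produce the partial $3k$-bound anchored at the reference point $x_0$ appearing in \eqref{ssmallperturbation}:
\begin{equation*}
\int_0^t \int_M \kP(x_0, z, t-s)\,|V(z)|\,\kP(z, y, s) \dz \ds \leq C\, \kP(x_0, y, t) \qquad \forall\, y \in M,\ t > 0.
\end{equation*}
Indeed, splitting $V = V_{n+1} + W_{n+1}$ via the cutoff sequence preceding Definition~\ref{def_sp}, Theorem~\ref{main} combined with Proposition~\ref{cor_3k} handles the compactly supported piece $V_{n+1}$ on $M_n$, while \eqref{ssmallperturbation} controls the tail $W_{n+1}$ on $M_n^*$; the generalized-maximum-principle gluing used at the end of the proof of Lemma~\ref{smallper1} assembles these bounds into the displayed inequality. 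Together with the hypothesis $k_{P_\vge}^M \asymp \kP$, this estimate dominates the integrand in Duhamel's identity \eqref{Duham} on an exhaustion $\{M_j\}$ by an integrable function, so dominated convergence yields \eqref{r_eq3} at $x = x_0$; running the same argument on the adjoint operator $P^\star$, whose heat kernel equals $\kP(y,\cdot,\cdot)$ by Lemma~\ref{properties}(2), extends the identity to every $x \in M$.

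The main obstacle is checking that the one-sided $x_0$-anchored bound truly suffices to justify the passage to the limit at arbitrary pairs $(x,y)$. In the symmetric framework of Theorems~\ref{main}–\ref{spmain} the adjoint reduction is transparent because $P = P^\star$; for the quasi-symmetric extension discussed in Section~\ref{concludingrem} one must verify that the semismallness condition is preserved under the adjoint, which should follow from the definition and the symmetry of the class of quasi-symmetric kernels, but this is the only step deserving genuine care.
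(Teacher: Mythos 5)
Your treatment of the small\-/perturbation case is precisely the paper's argument: Lemma~\ref{smallper1} supplies the $3k$-inequality \eqref{3k}, and Part~(3) of Theorem~\ref{thm_3k_implies_eq} (Duhamel's formula \eqref{Duham} on an exhaustion together with dominated convergence) yields \eqref{r_eq3}. That half is correct and complete, and is all the paper itself actually proves.

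The gap is in your semismall case. The anchored bound
\begin{equation*}
\int_0^t\!\!\int_M \kP(x_0,z,t-s)\,|V(z)|\,\kP(z,y,s)\dz\ds \le C\,\kP(x_0,y,t)\qquad \forall\, y\in M,\ t>0
\end{equation*}
is correct, and your derivation (Theorem~\ref{main} plus Proposition~\ref{cor_3k} for the compactly supported piece, \eqref{ssmallperturbation} for the tail, gluing by the maximum principle applied in the $y$-variable to $P^\star$) is sound. But the final step --- ``running the same argument on the adjoint \dots extends the identity to every $x\in M$'' --- does not deliver what you claim. In the symmetric setting $\kP(x,y,t)=\kP(y,x,t)$, so passing to the adjoint merely moves the anchor from the first spatial slot to the second: you obtain the domination needed for the limit, and hence \eqref{r_eq3}, whenever $x=x_0$ \emph{or} $y=x_0$, but not for an arbitrary pair $(x,y)$ with both points away from the reference point. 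For a semismall perturbation that is not small (and a signed $V$), there is no a priori reason why the double integral in \eqref{r_eq3} should even converge at a general pair $(x,y)$, so no limiting argument can produce the full identity; the correct conclusion in the semismall case is the anchored (``semi'') version of \eqref{r_eq3}, in line with the way Theorem~\ref{spmain} weakens equivalence to semi-equivalence for semismall $V$. Note also that your closing worry (whether semismallness is preserved under the adjoint in the quasi-symmetric setting) is not the delicate point --- in the symmetric framework $P=P^\star$ --- whereas the genuine restriction to $\{x=x_0\}\cup\{y=x_0\}$ goes unremarked. Since the corollary is invoked later (in the proof of Theorem~\ref{martin-positive}) only for small perturbations, this does not affect the paper's applications, but your proof as written overstates what the semismall hypothesis gives.
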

Next, we prove Theorem~\ref{spmain}.
\begin{proof}[Proof of Theorem~\ref{spmain}]
Part (1) follows from Lemma~\ref{smallper1} and Theorem~\ref{thm_3k_implies_eq}.

\medskip

(2) The proof follows immediately from part (1) and Lemma~\ref{lem1}.

\medskip

(3) Since $W_n$ is a small perturbation of $\kP$, it follows from part (1) that for $n$ large enough the $3k$-inequality holds true with respect to $W_n$ with a constant $C<1$, and therefore,
\begin{equation}\label{kpeqkwn}
    \kP \asymp k^{M}_{P - W_{n}}.
\end{equation}
On the other hand, since  $P-V = (P- W_{n}) - V_{n}$ and $V_{n}$ has compact support in $M$,  it follows from \eqref{kpeqkwn} and part (3) of Theorem~\ref{main} that
\begin{equation*}\label{pf}
k^{M}_{P-V} \asymp k^{M}_{P-W_{n}} \asymp \kP. \qedhere
\end{equation*}
\end{proof}
\begin{rem}{\em
We note that if the heat kernels $\kP$ and $k_{P-V}^M$ are semi-equivalent for each fixed $x\in M$, then by  Davies-Harnack inequality and either by the short-time asymptotics of the heat kernels, or under the additional assumption that $V\in L^\infty(M)$, we get the equivalence of this heat kernels in $K\times M\times (0,\infty)$ for any  $K\Subset M$.
}\end{rem}
\begin{cor}\label{cor_sp}
Suppose that the operator $P$ and the potential $V$ satisfy the assumptions of Theorem~\ref{spmain}, and that $k_{P_\vge}^M$ satisfies \eqref{assumption1} for all $\vge\in S_+$. Then
$$S_+=\{\vge\in \R \mid k_{P_\vge}^M\asymp \kP\}.$$
\end{cor}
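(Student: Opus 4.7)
The plan is to verify the two set inclusions separately, and the key observation is that the non-trivial direction reduces, almost tautologically, to part~\mbox{(3)} of Theorem~\ref{spmain} applied to a rescaled potential.

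For the inclusion $\{\vge\in \R : k_{P_\vge}^M\asymp \kP\}\subset S_+$, I would invoke the remark following Definition~\ref{defequivalent}: an equivalence $k_{P_1}^M\asymp k_{P_2}^M$ forces the two operators to be simultaneously subcritical. Since $P$ is subcritical by the standing hypothesis of Theorem~\ref{spmain}, any $\vge$ with $k_{P_\vge}^M\asymp \kP$ automatically satisfies $P_\vge$ subcritical, so $\vge\in S_+$.

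For the reverse inclusion $S_+\subset \{\vge\in \R : k_{P_\vge}^M\asymp \kP\}$, I would fix an arbitrary $\vge\in S_+$ and apply Theorem~\ref{spmain}~(3) with the rescaled potential $\tilde V := \vge V$ in place of $V$. The hypotheses are easily verified: $\tilde V\in L^p_{\mathrm{loc}}(M)$ since $V$ is; $\tilde V$ is a small perturbation with respect to $\kP$, because the defining condition \eqref{smallperturbation} is homogeneous of degree one in $|V|$, so multiplying $V$ by the scalar $\vge$ only multiplies the limit in \eqref{smallperturbation} by $|\vge|$ and therefore preserves its vanishing; $P-\tilde V = P_\vge$ is subcritical by the very definition of $S_+$; and $k_{P_\vge}^M$ satisfies the doubling condition \eqref{assumption1} by the standing hypothesis of the corollary. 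Theorem~\ref{spmain}~(3) then yields $\kP\asymp k_{P-\tilde V}^M = k_{P_\vge}^M$, i.e.\ $\vge\in \{\vge\in \R : k_{P_\vge}^M\asymp \kP\}$, which completes the proof.

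Since this argument consists of a direct application of Theorem~\ref{spmain} to the rescaled potential $\vge V$, there is no substantive obstacle in the way; the only point requiring an explicit comment is the scaling invariance of the small-perturbation class, which is immediate from the definition. In particular, one does not need to invoke any connectedness/chaining argument in $S_+$, and one does not need to monitor how the small-perturbation constants depend on $\vge$.
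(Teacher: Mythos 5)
Your argument is correct, and since the paper states Corollary~\ref{cor_sp} without proof immediately after Theorem~\ref{spmain}, your derivation is exactly the intended one: the easy inclusion follows from the remark after Definition~\ref{defequivalent}, and the reverse inclusion is Theorem~\ref{spmain}(3) applied to $\vge V$, which is legitimate because the small-perturbation condition \eqref{smallperturbation} and the $L^p_{\loc}$ hypothesis are both invariant under scalar multiplication and the corollary's hypothesis supplies the doubling condition for $k_{P_\vge}^M$. No gaps.
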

\section{Stability of the parabolic Martin boundary}\label{stableMartin}
In this section we study the behaviour of $\mathcal{C}_{L}(D)$, the cone of all nonnegative solutions of the parabolic equation
\begin{equation}\label{parabolic-cone}
Lu := \partial_{t} u + Pu = 0 \quad \mbox{in } D := M \times (a, b)
\end{equation}
under small perturbations, where $P$ is either of the form \eqref{P} or \eqref{operator} and  $-\infty \leq a < b \leq \infty$.

Our discussion is along the lines of the study of the elliptic case in \cite{MM1,YP3}, but the parabolic case needs a special care since the cone $\mathcal{C}_{L}(D)$ does not have a compact base. Before formulating the main result of the present section, we introduce some useful definitions and notations.
\begin{definition}{\em
Let $\mathcal{C}_{1} $ and  $\mathcal{C}_{2} $ be two convex cones embedded in topological spaces $\mathcal{V}_{1}$ and $\mathcal{V}_{2}$, respectively. $\mathcal{C}_{1}$ and $\mathcal{C}_{2}$ are said to be
\emph{affine equivalent} (and we denote it by $\mathcal{C}_{1} \cong \mathcal{C}_{2}$) if there exists homeomorphism $\Gf : \mathcal{C}_{1} \rightarrow \mathcal{C}_{2}$ which preserves convex combinations. Such a $\Gf$ is called an \emph{affine homeomorphism}.}
\end{definition}
Particularly, we are interested in the question whether $k_{P}^{M} \asymp k_{P-V}^{M} $  implies
$\mathcal{C}_{L}(D) \cong \mathcal{C}_{L-V}(D)$, where $V$ is a small perturbation.

We recall some of the basic facts concerning the parabolic Martin boundary and the parabolic Martin representation theorem (for more details see \cite{JLD,MM2}).

Let $x_{0}$ be a fixed reference point in $M.$ Consider a  nonnegative continuous function $\psi $ on $M$ such that $\psi(x) = 1$ on $B(x_{0}, r)$ and $\psi(x) = 0$ outside $B(x_{0}, 2r),$ for some $r > 0$ small enough. Also choose a nonnegative continuous $h$ on $(a, b)$ such that $h(t) = 0$ on $(a, a_1]$ and $h(t) > 0 $ on $(a_{1}, b)$, where $a < a_{1}  < b.$

Define a measure $\rho$ on $D$ by $\mbox{d} \rho(x, t):= \psi(x) h(t) \!\dx \!\dt .$ For any nonnegative measurable function $u$ on $D,$ we define
\begin{equation}\label{reference-measure}
\rho(u) := \int_{a}^{b}\int_{M}  u(x, t) \mbox{d} \rho (x, t).
\end{equation}
In the literature $\mathrm{d}\rho$ is known as a {\em reference measure}.

Define $$\mathcal{C}_{\rho, L}(D)\!:=\! \{ u \!\in\! \mathcal{C}_{L}(D) \mid \rho(u) \!<\! \infty \},\;\; \mathcal{C}_{\rho, L}^1(D) \!:=\! \{ u \!\in\! \mathcal{C}_{L}(D)\mid \rho(u) \!\leq\! 1\}.$$ Clearly, for every $u \in \mathcal{C}_{L}(D), $ there exists $h$ as defined above such that $\rho(u) < \infty.$
Hence, $\mathcal{C}_{L}(D)  = \cup_{\rho} \mathcal{C}_{\rho, L}(D)$. Moreover, the parabolic Harnack inequality implies that if $u\in \mathcal{C}_{\rho, L}(D)$ and $\rho(u) = 0$, then $u = 0$. Recall that nonnegative solution $u \in \mathcal{C}_{L}(D)$ is said to be {\em minimal} if for any nonnegative solution $v \in \mathcal{C}_{L}(D)$ such that $v \leq u$, there exists a nonnegative constant $c$ such that  $v = c u$. Denote by $\mathcal{C}_{L}^m(D)$ the set of all minimal solutions in $\mathcal{C}_{L}(D)$. By the Harnack principle, $\mathcal{C}_{\rho, L}^1(D)$ is a compact convex set in the compact-open topology, and by the Choquet theorem, the set of all extreme points of $\mathcal{C}_{\rho, L}^1(D)$ is equal to the union of the zero function and $\mathcal{C}_{L}^m(D)\cap \{u\in \mathcal{C}_{\rho, L}^1(D)\mid \gr(u)=1\}$.

We now introduce the Martin kernels. Let
$$
\kP ((x, t), (y, s)) :=
\left\{
  \begin{array}{ll}
    \kP(x, y, t-s) & a < s < t < b,\;\mbox{and } x,y\in M,\\[2mm]
    0 & a < t \leq s < b ,\; \mbox{and }  x,y\in M.
  \end{array}
\right.
$$
Fix a reference measure $\rho$, and define
$\mathcal{K}_{P}^{\rho} ((x, t), (y, s))$ the {\em (parabolic) $\rho$-Martin kernel} on ${D} \times {D}$ by
$$
\mathcal{K}_{P}^{\rho} ((x, t), (y, s)) :=
\left\{
  \begin{array}{ll}
    \dfrac{\kP((x, t), (y,s))}{\rho(\kP(\cdot \,, (y, s))} & a < s < t < b,\;\mbox{and } x,y\in M,\\[4mm]
    0 & a < t \leq s < b ,\; \mbox{and }  x,y\in M.
  \end{array}
\right.
$$
It follows that up to a homeomorphism, there exists a unique metrizable compactification $D_{L}^{\rho}$ of ${D}$ with the following properties (see for detail, \cite[Section~2]{MM2}):
\begin{itemize}
\item[(1)] The function $\mathcal{K}_{P}^{\rho} $ has a continuous extension to ${D} \times D_{L}^{\rho}$ such that for each
$(x, t) \in {D},$ the function $\mathcal{K}_{P}^{\rho} ((x, t),\, \cdot) $ is finite
valued and continuous on $D_{L}^{\rho} \setminus \{ (x, t) \} .$

\item[(2)] For $\gs_1, \gs_2\in D_{L}^{\rho}$ we have  $\mathcal{K}_{P}^{\rho}(\cdot\,, \gs_1) = \mathcal{K}_{P}^{\rho}(\cdot\,, \gs_2) $ if and only if $\gs_1= \gs_2$.
\end{itemize}

We write $\partial_{L}^{\, \rho}D := D_{L}^{\rho} \setminus {D} $, and we call it the  {\em parabolic $\rho$-Martin boundary of ${D}$ with respect to $L$ and a reference measure $\rho$}. A sequence  $\{ Y_{n} \} := \{(y_{n}, \tau_{n}) \}\subset D\times D $ is said to be {\em a fundamental sequence}
 if $\{ Y_{n} \}$ has no accumulation point in ${D} \times {D},$
$Y_{n} \rightarrow \sigma \in \partial_{L}^{\, \rho}D $. In particular, $\mathcal{K}_{P}^{\rho} ((x, t), (y_{n}, \tau_{n})) \rightarrow \mathcal{K}_{P}^{\rho} ((x, t), \sigma)$ locally uniformly in $D$, and
$\mathcal{K}_{P}^{\rho} (\cdot \,, \sigma)$ is a nonnegative solution to \eqref{parabolic-cone}. Note that by Fatou's Lemma, we have   $\rho(\mathcal{K}_{P}^{\rho}(\cdot \,, \sigma)) \leq 1$. So, $\mathcal{K}_{P}^{\rho} (\cdot \,, \sigma) \in \mathcal{C}_{\rho, L}^1(D)$ for any $\gs\in \partial_{L}^{\, \rho}D$.


We recall the parabolic Martin representation theorem. Define
\begin{equation}\label{minimal-set}
\partial_{L, 1}^{\,  \rho, m}D := \{ \sigma \in \partial_{L}^{\, \rho}D \mid \mathcal{K}_{P}^{\rho}(\cdot \,, \sigma)\in \mathcal{C}_{L}^m(D), \gr(\mathcal{K}_{P}^{\rho}(\cdot \,, \sigma))=1 \}.
\end{equation}
We call $\partial_{L, 1}^{\,  \rho, m}D $ the {\em (nontrivial) minimal parabolic  $\rho$-Martin boundary}.

The parabolic Martin representation theorem states: $u  \in \mathcal{C}_{\rho, L}(D)$,  if and only if there exists a unique Borel measure
 $\lambda$ on $\partial_{L}^{\,  \rho}D $ supported on $\mathcal{\partial}_{L, 1}^{\, \rho, m}D$, such that
 \begin{equation}\label{parabolic-represt}
 u(x, t) =    \int_{\mathcal{\partial}_{L}^{\, \rho}D} \mathcal{K}_{P}^{\rho}((x, t), \sigma)  \mbox{d} \lambda(\sigma),
 \end{equation}
and $\rho(u) = \lambda(\mathcal{\partial}_{L, 1}^{\, \rho}D).$


Next, we formulate our main result of the present section.
\begin{theorem}\label{martin-positive}
Let $P$ and $P - V$ be two subcritical operators such that $V$ is a small perturbation with respect to the heat kernel $\kP$, and $\kP\asymp k_{P- V}^{M}$ in $M\times M\times (0,\infty)$ with an equivalence constant $C$.

Then there exists an affine homeomorphism $\mathcal{T}:\mathcal{C}_{L}(D) \to \mathcal{C}_{L - V}(D)$ such that
\begin{equation}\label{extension3}
(\mathcal{T}u)(x, t) \!:=\!   u(x, t) +\!  \int_0^{t }\!\!\! \int_{M} \!\!\!\!k_{P\!-\! V}^{M}(x, z, t\! - \!s) V(z) u(z, s)   \!\dz\!\ds\ \;\; \forall u\!\in\! \mathcal{C}_{L}(D).
\end{equation}
Moreover,  for each $u\in   \mathcal{C}_{L}(D)$, we have $\mathcal{T}u \asymp u$ with equivalence constant $C^2$.
\end{theorem}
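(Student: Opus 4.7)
The approach is to construct $\mathcal{T}$ explicitly via the formula \eqref{extension3} and to exhibit a candidate inverse $\mathcal{S}\colon \mathcal{C}_{L-V}(D) \to \mathcal{C}_L(D)$ given by
\[
  \mathcal{S}v(x,t) := v(x,t) - \int_0^t \int_M k_P^M(x,z,t-s) V(z) v(z,s) \dz \ds,
\]
so that $\mathcal{S}$ and $\mathcal{T}$ are formally related by interchanging the roles of $P$ and $P-V$ and reversing the sign of $V$. The pivotal observation is that when $u$ is the fundamental solution $u(z,s) = k_P^M(z,y,s)$ for fixed $y$, Corollary~\ref{cor_r_eq} (the resolvent equation for the pair $P, P-V$) collapses the integral in the definition of $\mathcal{T}u$ and yields $\mathcal{T}u(x,t) = k_{P-V}^M(x,y,t)$; symmetrically, $\mathcal{S}$ applied to $v(z,s) = k_{P-V}^M(z,y,s)$ returns $k_P^M(x,y,t)$. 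Thus $\mathcal{T}$ and $\mathcal{S}$ interchange the two heat kernels, and the hypothesis $k_P^M \asymp k_{P-V}^M$ drives every comparison estimate that follows.

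First I would verify that for each $u \in \mathcal{C}_L(D)$ the integral in \eqref{extension3} converges and satisfies $\mathcal{T}u \asymp u$. By the parabolic Martin representation, any $u \in \mathcal{C}_{\rho,L}(D)$ decomposes as $u(x,t) = \int_{\partial_L^{\,\rho} D} \mathcal{K}_P^\rho((x,t),\sigma)\, \mathrm{d}\lambda(\sigma)$. Since $V$ is a small perturbation, Lemma~\ref{smallper1} supplies the $3k$-inequality \eqref{3k}, which together with $k_P^M \asymp k_{P-V}^M$ furnishes the integrable majorant needed to interchange the Martin integral with the integral defining $\mathcal{T}u$, yielding
\[
  \mathcal{T}u(x,t) = \int_{\partial_L^{\,\rho} D} \mathcal{T}\mathcal{K}_P^\rho((x,t),\sigma)\, \mathrm{d}\lambda(\sigma).
\]
For $\sigma = (y,s) \in D$, the resolvent equation gives $\mathcal{T}\mathcal{K}_P^\rho(\cdot,(y,s)) = k_{P-V}^M(\cdot,(y,s))/\rho(k_P^M(\cdot,(y,s)))$, and $k_P^M \asymp k_{P-V}^M$ immediately yields $\mathcal{T}\mathcal{K}_P^\rho(\cdot,(y,s)) \asymp \mathcal{K}_P^\rho(\cdot,(y,s))$. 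For $\sigma$ on the parabolic Martin boundary, passing to the limit along a fundamental sequence $Y_n \to \sigma$ (with pointwise convergence of Martin kernels, dominated by the $3k$-bound) preserves the equivalence for each $\sigma$. Integrating against $\lambda$ produces $\mathcal{T}u \asymp u$, with slack from passing through the resolvent integral explaining the constant $C^2$. The resulting lower bound $\mathcal{T}u \geq C^{-2} u$ ensures nonnegativity of $\mathcal{T}u$ even when $V$ changes sign, while a direct computation, using that $k_{P-V}^M$ is the fundamental solution of $\partial_t + (P-V)$ and $u$ solves $Lu=0$, confirms $(\partial_t + P - V)\mathcal{T}u = 0$.

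Next, $\mathcal{T}$ is manifestly linear, hence automatically preserves convex (and affine) combinations. Bijectivity is obtained by showing $\mathcal{S}\circ\mathcal{T} = \operatorname{id}_{\mathcal{C}_L(D)}$ and $\mathcal{T}\circ\mathcal{S} = \operatorname{id}_{\mathcal{C}_{L-V}(D)}$; both identities are checked first on heat kernels $u = k_P^M(\cdot,y,\cdot)$ and $v = k_{P-V}^M(\cdot,y,\cdot)$, where the two versions of the resolvent equation \eqref{r_eq3} force the double integral produced by the composition to cancel the correction term, and then extended to arbitrary $u \in \mathcal{C}_{\rho,L}(D)$ by the Martin representation, linearity, and Fubini. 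Continuity in the compact-open topology follows because a locally uniformly convergent sequence $u_n \to u$ in $\mathcal{C}_{\rho,L}^1(D)$ is uniformly dominated by a multiple of a fixed reference Martin kernel (by compactness of the base $\mathcal{C}_{\rho,L}^1(D)$ under the parabolic Harnack principle), so dominated convergence gives $\mathcal{T}u_n \to \mathcal{T}u$ locally uniformly; the same argument applies to $\mathcal{T}^{-1} = \mathcal{S}$.

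The main obstacle I anticipate is the uniform-in-$\sigma$ control needed to pass from the pointwise equivalence $\mathcal{T}\mathcal{K}_P^\rho(\cdot,\sigma) \asymp \mathcal{K}_P^\rho(\cdot,\sigma)$ at interior points to the same equivalence at Martin boundary points, and then through the $\lambda$-integral. This is where the full strength of the small-perturbation hypothesis enters rather than mere compact support: the tails $W_n = V - \Phi_n V$ must yield vanishing contributions \emph{uniformly} in $\sigma \in D_L^\rho$, which is precisely the content of \eqref{smallperturbation} and the globally valid $3k$-inequality from Lemma~\ref{smallper1}. A secondary technical point is verifying that $\sigma \mapsto \mathcal{T}\mathcal{K}_P^\rho(\cdot,\sigma)$ is continuous on the compact space $D_L^\rho$, which requires a dominated-convergence argument using the $3k$-tail estimate rather than mere pointwise limits along fundamental sequences.
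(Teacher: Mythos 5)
Your proposal is correct and follows essentially the same route as the paper: the Martin representation plus the $3k$-inequality/small-perturbation tail control to justify convergence and the equivalence $\mathcal{T}u\asymp u$, the resolvent equation to collapse $\mathcal{T}$ on heat kernels and pass to Martin boundary points along fundamental sequences, and the explicit inverse $\mathcal{S}$ with the Fubini cancellation for bijectivity. The only organizational difference is that you define $\mathcal{T}$ directly by the integral formula and verify its properties, whereas the paper first builds the correspondence $\alpha_{\rho}$ between minimal Martin boundaries (Proposition~\ref{minimal_martin_prop}) and defines $\mathcal{T}_{\rho}$ on minimal Martin kernels before extending; the underlying estimates are identical.
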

\begin{rem}{\em
In Theorem~\ref{martin-positive} we do not assume that $P$ is symmetric.
}
\end{rem}
\begin{rem}{\em
For the sake of brevity we present only the proof in the case $a = 0$ and  $ b = \infty$. So, we prove the case $D = M \times (0, \infty)$.
 It will be evident from the proof that all other cases follow along similar lines (see Remark~\ref{finalrem}).
}
\end{rem}
\begin{rem}{\em
Let $D = M \times (0, \infty)$. Then any fundamental sequence $\{ (y_{n}, \tau_{n}) \}$ converging to $\gs\in \partial_{L}^{\, \rho}D$ satisfies (up to a subsequence) $\tau_{n}\to T$, where  $0 \leq T \leq \infty$.

Therefore, if $T = \infty,$ i.e., $\tau_{n} \rightarrow \infty$, then  for a fixed $x\in M$ and $t>0$
 $$\kP(x, y_{n}, t - \tau_{n}) = k_{P - V}^{M}(x, y_{n}, t - \tau_{n}) = 0$$ for $n$ large enough, and therefore,
\[
\lim_{n\to\infty}\mathcal{K}_{P}^{\rho} ((x, t), (y_{n}, \tau_{n})) =  \lim_{n\to\infty}\mathcal{K}_{P- V}^{\rho} ((x, t), (y_{n}, \tau_{n})) = 0.
\]
On the other hand, if $\tau_{n} \rightarrow T$, where $ 0 < T < \infty$, then
the Martin kernel $\mathcal{K}_{P}^{\rho}((x, t), \sigma)$ satisfies $\mathcal{K}_{P}^{\rho}((x, t), \sigma)=0$ for all $t \leq T$. Hence, if the uniqueness of the positive Cauchy problem holds true, then $\mathcal{K}_{P}^{\rho}(\cdot, \sigma)=0$ in $D$.
}
\end{rem}
 The proof of Theorem~\ref{martin-positive} hinges on the following key proposition.
 \begin{prop}\label{minimal_martin_prop}
 Let $P$ and $\tilde P$ be two subcritical operator such that $k^{M}_{P} \asymp k^{M}_{\tilde P}$ in $M \times M \times (0, \infty).$ Then there exists a homeomorphism
 $\alpha_{\rho} : \partial_{L, 1}^{\rho, m} D \rightarrow \partial_{\tilde L, 1}^{\rho, m} D $ and $C > 0$ such that
 \begin{equation}\label{minimal_martin_eq}
 C^{-1} \mathcal{K}_{P}^{\rho} ((x, t), \sigma) \leq \mathcal{K}_{\tilde P}^{\rho} ((x, t), \alpha_{\rho}(\sigma)) \leq C \mathcal{K}_{P}^{\rho} ((x, t), \sigma)
 \end{equation}
 for every $\sigma \in \partial_{L, 1}^{\rho, m} D$ and $(x, t) \in M \times (0, \infty).$
 \end{prop}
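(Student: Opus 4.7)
The proof proceeds by establishing a pointwise equivalence of the two Martin kernels on $D\times D$, extracting the candidate homeomorphism via compactness of the Martin compactification, and then transferring minimality across by a pushforward argument.

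\emph{Pointwise equivalence and definition of the candidate map.} From the hypothesis $k_P^M\asymp k_{\tilde P}^M$ with constant $C_0$, integrating against $\rho$ in the first variable gives $\rho(k_P(\cdot,Y))\asymp\rho(k_{\tilde P}(\cdot,Y))$, and dividing yields
$$
C_0^{-2}\,\mathcal K_P^{\rho}(X,Y)\leq \mathcal K_{\tilde P}^{\rho}(X,Y)\leq C_0^{2}\,\mathcal K_P^{\rho}(X,Y)\qquad\forall\,X,Y\in D.
$$
Given $\sigma\in\partial_{L,1}^{\,\rho,m}D$ with a fundamental sequence $Y_n\to\sigma$ in $D_L^{\,\rho}$, compactness of $D^{\,\rho}_{\tilde L}$ (combined with the absence of accumulation points of $\{Y_n\}$ in $D$) yields a subsequential limit $\tilde\sigma\in\partial_{\tilde L}^{\,\rho}D$. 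Continuity of the $\tilde L$-Martin kernel on its compactification, together with the sandwich above, gives
$$
C_0^{-2}\,\mathcal K_P^{\rho}(\cdot,\sigma)\leq \mathcal K_{\tilde P}^{\rho}(\cdot,\tilde\sigma)\leq C_0^{2}\,\mathcal K_P^{\rho}(\cdot,\sigma).
$$
Running the same extraction with the roles of $P$ and $\tilde P$ swapped provides a dual map $\beta:\partial_{\tilde L}^{\,\rho}D\to \partial_L^{\,\rho}D$ obeying the analogous comparison.

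\emph{Minimality and normalization.} The crux is showing $\mathcal K_{\tilde P}^{\rho}(\cdot,\tilde\sigma)$ is minimal in $\mathcal C_{\tilde L}(D)$. Given any $v\in\mathcal C_{\tilde L}(D)$ with $v\leq \mathcal K_{\tilde P}^{\rho}(\cdot,\tilde\sigma)$, I write its $\tilde L$-Martin representation $v=\int_{\partial_{\tilde L,1}^{\,\rho,m}D} \mathcal K_{\tilde P}^{\rho}(\cdot,\tau)\,\mathrm d\nu(\tau)$ and push it forward by $\beta$ (which one checks can be taken Borel measurable) to form $u:=\int \mathcal K_P^{\rho}(\cdot,\eta)\,\mathrm d(\beta_{\ast}\nu)(\eta)\in \mathcal C_L(D)$. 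The pointwise comparison gives $u\asymp v$ with constants $C_0^{\pm 2}$, hence $u\leq C_0^{4}\,\mathcal K_P^{\rho}(\cdot,\sigma)$. The minimality of $\mathcal K_P^{\rho}(\cdot,\sigma)$ together with the uniqueness of the $L$-Martin representation forces $\beta_{\ast}\nu$ to concentrate at $\sigma$. Moreover, any two points $\tau,\tau'\in \beta^{-1}(\sigma)\cap\partial_{\tilde L,1}^{\,\rho,m}D$ would satisfy $\mathcal K_{\tilde P}^{\rho}(\cdot,\tau)\asymp \mathcal K_{\tilde P}^{\rho}(\cdot,\tau')$; being both minimal and $\rho$-normalized they must coincide, so $\nu$ is a point mass and $v\propto \mathcal K_{\tilde P}^{\rho}(\cdot,\tilde\sigma)$. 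I then define $\alpha_\rho(\sigma)\in\partial_{\tilde L,1}^{\,\rho,m}D$ as the unique normalized boundary point with $\mathcal K_{\tilde P}^{\rho}(\cdot,\alpha_\rho(\sigma))$ proportional to $\mathcal K_{\tilde P}^{\rho}(\cdot,\tilde\sigma)$; the renormalization factor lies in $[C_0^{-2},1]$, and \eqref{minimal_martin_eq} follows with $C:=C_0^{4}$.

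\emph{Well-definedness, continuity, and inverse.} If a second fundamental sequence $Y'_n\to\sigma$ produces $\tilde\sigma'$, the sandwich estimate forces $\mathcal K_{\tilde P}^{\rho}(\cdot,\tilde\sigma)\asymp \mathcal K_{\tilde P}^{\rho}(\cdot,\tilde\sigma')$, and by the same minimality and normalization argument they correspond to the same $\alpha_\rho(\sigma)$. Continuity follows from the joint continuity of the Martin kernels on the compactifications together with the sandwich estimate. The symmetric construction with $P$ and $\tilde P$ interchanged yields the inverse map $\beta_\rho$, establishing $\alpha_\rho$ as a homeomorphism. \emph{Main obstacle:} the transfer of minimality through the dual map $\beta$---a positive $\tilde L$-solution is not an $L$-solution, so minimality must be imported via pushforwards of representing measures and uniqueness of the minimal Martin representation on both sides, with careful bookkeeping of the renormalization constants (since boundary Martin kernels may have $\rho$-integral strictly less than $1$) and of the Borel measurability of $\beta$.
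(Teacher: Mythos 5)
Your opening steps coincide with the paper's: the two-sided bound $C^{-2}\mathcal{K}_{P}^{\rho}\leq\mathcal{K}_{\tilde P}^{\rho}\leq C^{2}\mathcal{K}_{P}^{\rho}$ on $D\times D$, the subsequential extraction of $\tilde\sigma$ in $D_{\tilde L}^{\rho}$, and the passage of the sandwich to the boundary kernels are exactly the paper's starting point (and you are in fact more careful than the paper about renormalizing the limit kernel, whose $\rho$-integral may be less than $1$). The divergence — and the gap — is in how you transfer minimality and well-definedness across the two cones.

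Your pushforward argument is circular at two points. First, the dual map $\beta$ is itself only defined through subsequential limits, so its single-valuedness on $\partial_{\tilde L,1}^{\,\rho,m}D$ is precisely the same problem as the well-definedness of $\alpha_{\rho}$; you cannot use $\beta$ as a bona fide (let alone Borel) map to prove the statement whose proof would be needed to define $\beta$. A measurable selection of subsequential limits could replace single-valuedness for the purpose of forming $u=\int\mathcal{K}_{P}^{\rho}(\cdot,\beta(\tau))\,\mathrm{d}\nu(\tau)$, but you only assert measurability. Second, and more seriously, you invoke the uniqueness of the $L$-Martin representation to conclude that $\beta_{*}\nu$ concentrates at $\sigma$. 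Uniqueness in the representation theorem holds only for measures carried by $\partial_{L,1}^{\,\rho,m}D$; nothing in your argument shows that $\beta$ sends minimal $\tilde L$-boundary points to minimal, normalized $L$-boundary points — that is again the content of the proposition, in the reverse direction. If $\beta_{*}\nu$ charges non-minimal points, $u=\int\mathcal{K}_{P}^{\rho}(\cdot,\eta)\,\mathrm{d}(\beta_{*}\nu)(\eta)$ is simply not the Martin representation of $u$, and no conclusion about $\nu$ follows.

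The paper avoids the boundary entirely at this step. Its key tool is Lemma~\ref{positive_martin_sol}: for \emph{every} $u\in\mathcal{C}^{1}_{\rho,L}(D)$ there is $\tilde u\in\mathcal{C}^{1}_{\rho,\tilde L}(D)$ with $C^{-2}u\leq\tilde u\leq C^{2}u$ (proved by interior approximation as in \cite[Lemma~2.4]{YP3}, where only the comparability of the kernels inside $D$ is used — no representing measures, no measurability of a boundary map). Combined with the maximal-$\vge$ trick — set $\vge_{0}:=\max\{\vge>0:\ \mathcal{K}_{\tilde P}^{\rho}(\cdot,\tilde\sigma_{1})-\vge\,\mathcal{K}_{\tilde P}^{\rho}(\cdot,\tilde\sigma_{2})\geq0\}$, transfer the nonzero remainder to an $L$-solution dominated by a multiple of the minimal $\mathcal{K}_{P}^{\rho}(\cdot,\sigma)$, and derive a contradiction with the maximality of $\vge_{0}$ — this yields both the well-definedness of $\alpha_{\rho}$ and the minimality of the image kernel. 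You should either prove a transfer lemma of this type or repair the pushforward step; as written, the minimality and well-definedness claims are not established.
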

For the proof of the above proposition we need the following lemma.
\begin{lem}\label{positive_martin_sol}
Suppose that $k^{M}_{P} \asymp k^{M}_{\tilde P}$. Then for every $u \in C^{1}_{\rho, L}(D)$ there exists $\tilde u \in C^{1}_{\rho, \tilde L}(D)$ that satisfies
$$
C^{-2} u(x, t) \leq \tilde u(x, t) \leq C^2 u(x, t) \quad (x, t) \in M \times (0, \infty)
$$
where $C$ is the equivalent constant for $k^{M}_{P}$ and $k^{M}_{\tilde P}$.
\end{lem}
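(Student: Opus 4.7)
The plan is to express $u$ via the parabolic Martin representation, approximate the representing measure by discrete measures supported at \emph{interior} points of $D$ (where the heat kernel equivalence translates directly into equivalence of Martin kernels), and pass to a limit by Harnack compactness. Throughout, $C$ is the equivalence constant for $k^M_P \asymp k^M_{\tilde P}$.

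First, apply the parabolic Martin representation \eqref{parabolic-represt} to obtain a nonnegative Borel measure $\lambda$ on $\partial_{L,1}^{\,\rho,m} D$ with $\lambda(\partial_{L,1}^{\,\rho,m} D) = \rho(u) \leq 1$ and
\[
u(x,t) = \int_{\partial_{L,1}^{\,\rho,m} D} \mathcal{K}_P^\rho((x,t),\sigma)\, d\lambda(\sigma).
\]
Since $D_L^\rho$ is compact and metrizable and $D$ is dense in $D_L^\rho$ (each boundary point being the limit of a fundamental sequence in $D$), a standard Borel partition-and-sampling argument produces discrete measures
\[
\lambda_n := \sum_{j} \lambda(B_j^{(n)})\, \delta_{(y_j^{(n)}, \tau_j^{(n)})}
\]
with $(y_j^{(n)}, \tau_j^{(n)}) \in D$, total mass $\lambda_n(D_L^\rho) = \rho(u)$, and $\lambda_n \to \lambda$ weakly-$*$ on $D_L^\rho$. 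Since $\mathcal{K}_P^\rho((x,t),\cdot)$ is continuous on $D_L^\rho \setminus \{(x,t)\}$ (the interior singularity at $(x,t)$ is harmless because $\lambda$ is boundary-supported and the $(y_j^{(n)},\tau_j^{(n)})$ sit arbitrarily close to the boundary), the approximants
\[
u_n(x,t) := \sum_{j} \lambda(B_j^{(n)})\, \mathcal{K}_P^\rho((x,t), (y_j^{(n)}, \tau_j^{(n)}))
\]
converge pointwise, and hence by the parabolic Harnack inequality locally uniformly, to $u$ on $D$.

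Now define twin approximants on the $\tilde L$-side,
\[
\tilde u_n(x,t) := \sum_{j} \lambda(B_j^{(n)})\, \mathcal{K}_{\tilde P}^\rho((x,t), (y_j^{(n)}, \tau_j^{(n)})).
\]
For each interior source $(y,\tau)\in D$ and every $t > \tau$, both Martin kernels are quotients
\[
\mathcal{K}_P^\rho((x,t),(y,\tau)) = \frac{k_P^M(x,y,t-\tau)}{\rho(k_P^M(\cdot,(y,\tau)))}, \qquad \mathcal{K}_{\tilde P}^\rho((x,t),(y,\tau)) = \frac{k_{\tilde P}^M(x,y,t-\tau)}{\rho(k_{\tilde P}^M(\cdot,(y,\tau)))},
\]
so $k_P^M \asymp k_{\tilde P}^M$ with constant $C$ yields $C^{-2}\mathcal{K}_P^\rho \leq \mathcal{K}_{\tilde P}^\rho \leq C^2 \mathcal{K}_P^\rho$ (one factor of $C$ from the numerator, a second from the denominator), and hence $C^{-2} u_n \leq \tilde u_n \leq C^2 u_n$ on $D$. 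Each $\tilde u_n$ is a nonnegative solution of $\tilde L \tilde u_n = 0$ on $D$; since $\{u_n\}$ is locally uniformly bounded so is $\{\tilde u_n\}$, and the parabolic Harnack principle extracts a subsequence $\tilde u_{n_k} \to \tilde u \in \mathcal{C}_{\tilde L}(D)$ locally uniformly with $C^{-2} u \leq \tilde u \leq C^2 u$ on $D$. Finally, $\rho(\mathcal{K}_{\tilde P}^\rho(\cdot,(y,\tau))) = 1$ for every interior $(y,\tau)$ by the very definition of the Martin kernel, so $\rho(\tilde u_n) = \lambda_n(D_L^\rho) = \rho(u) \leq 1$, and Fatou's lemma gives $\rho(\tilde u) \leq 1$, placing $\tilde u \in \mathcal{C}_{\rho,\tilde L}^1(D)$.

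The main technical point is the approximation in the second step: replacing the boundary-supported $\lambda$ by measures supported on interior points of $D$, while preserving both the weak-$*$ convergence (needed to recover $u$) and the normalizing identity $\rho(\mathcal{K}^\rho(\cdot,(y,\tau))) = 1$ (needed to secure $\rho(\tilde u) \leq 1$). Once this is in hand the equivalence of heat kernels transfers seamlessly to the interior Martin kernels, and the remainder is routine Harnack compactness and Fatou. No symmetry hypothesis on $P$ enters anywhere, in agreement with the flexibility noted for Theorem~\ref{martin-positive}.
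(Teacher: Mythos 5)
Your proof is correct and follows the route the paper intends: the authors omit the proof of this lemma, referring instead to its elliptic analogue \cite[Lemma~2.4]{YP3}, whose argument is precisely your scheme of Martin representation, discretization of the representing measure at interior points (where the heat-kernel equivalence transfers to the Martin kernels, with one factor of $C$ from the numerator and one from the $\rho$-normalization, accounting for $C^2$), and Harnack compactness plus Fatou. The only details worth spelling out in a final write-up are (i) that the interior atoms $(y_j^{(n)},\tau_j^{(n)})$ eventually leave every compact subset of $D$, so that $u_n$ and $\tilde u_n$ are genuine solutions near any fixed compact set for $n$ large and the limit $\tilde u$ solves $\tilde L\tilde u=0$ in all of $D$, and (ii) the truncation of the singularity of $\mathcal{K}_{P}^{\rho}((x,t),\cdot)$ at $(x,t)$ when invoking weak-$*$ convergence --- both of which you already flag.
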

The proof of Lemma~\ref{positive_martin_sol} is similar to the proof of \cite[Lemma~2.4]{YP3}, and therefore it is omitted.
\begin{proof}[Proof of Proposition~\ref{minimal_martin_prop}]
Let $\sigma \in \partial_{L, 1}^{\rho, m} D$, and for $k = 1, 2$, let $\{ (y_n^k, \tau_n^k) \}$ be two fundamental subsequences of a  fundamental sequence $\{ (y_n, \tau_n) \}\subset D$  such that
 $$
 (y_n^k, \tau_n^k) \rightarrow \sigma \quad \mbox{in} \ D_{L}^{\rho}, \quad \mbox{and } \quad
 (y_n^k, \tau_n^k) \rightarrow \tilde \sigma_{k}  \quad \mbox{in} \ D_{\tilde L}^{\rho} .
$$

We claim that $\tilde \sigma_{1} = \tilde \sigma_{2} $, and $ \tilde \sigma_{1} \in \partial_{L, 1}^{\rho, m} D$. In particular, the mapping $\sigma\mapsto \tilde \sigma_{1}$ is a well defined mapping $\alpha_{\rho} : \partial_{L, 1}^{\rho, m} D \rightarrow \partial_{\tilde L, 1}^{\rho, m} D $, defined  by
$\alpha_{\rho}(\sigma) := \tilde \sigma$, if $(y_n, \tau_n) \rightarrow \sigma \in \partial_{L, 1}^{\rho, m} D$, and $(y_n, \tau_n) \rightarrow \tilde \sigma \in \partial_{\tilde L, 1}^{\rho, m} D$.

Indeed, from our assumption that $k^{M}_{P} \asymp k^{M}_{\tilde P}$ it follows that
  \begin{equation}\label{minimal_martin_eq_1}
 C^{-2} \mathcal{K}_{P}^{\rho} ((x, t), \sigma) \leq \mathcal{K}_{\tilde P}^{\rho} ((x, t), \tilde \sigma_{k}) \leq C^2 \mathcal{K}_{P}^{\rho} ((x, t), \sigma)\quad \forall (x,t)\in D,
 \end{equation}
where $C$ is the equivalence constant.  Using \eqref{minimal_martin_eq_1}, we obtain $$
\mathcal{K}_{\tilde P}^{\rho}((x, t), \tilde \sigma_1 ) - C^{-4} \mathcal{K}_{\tilde P}^{\rho}((x, t), \tilde \sigma_2 ) \geq 0.
$$
We use now the maximal $\vge$ trick. Define
$$
\varepsilon_0 := \mbox{max} \{ \varepsilon > 0 :
\mathcal{K}_{\tilde P}^{\rho}((x, t), \tilde \sigma_1 ) - \varepsilon \mathcal{K}_{\tilde P}^{\rho}((x, t), \tilde \sigma_2 ) \geq 0 \},
$$
and let
$$
\tilde v_{\rho}(x, t) := \mathcal{K}_{\tilde P}^{\rho}((x, t), \tilde \sigma_1 ) - \varepsilon_0 \mathcal{K}_{\tilde P}^{\rho}((x, t), \tilde \sigma_2 ).
$$
Clearly $\tilde v_{\rho} \geq 0,$ and we may assume that $\rho(\tilde v_{\rho}) > 0$, since otherwise, $\tilde \sigma_1 = \tilde \sigma_2$. Lemma~\ref{positive_martin_sol} implies that there exists $u \in C^{1}_{\rho, L}(D)$ such that
\begin{equation}\label{minimal_eq_1}
C^{-2} u(x, t) \leq \frac{\tilde v_{\rho}(x, t)}{\rho(\tilde v_{\rho})} \leq C^2 u(x, t).
\end{equation}
Therefore, $0 \leq u(x, t) \leq C^4 (\rho(\tilde v_{\rho}))^{-1} \mathcal{K}^{\rho}_{P}((x, t), \sigma)$. Since $\mathcal{K}^{\rho}_{P}((x, t), \sigma)$ is a minimal solution, we have $u(x, t) = \mu \mathcal{K}^{\rho}_{P}((x, t), \sigma)$ for some $\mu > 0.$ By substituting this in \eqref{minimal_eq_1}, we obtain
$$
C^{-4} \mu \rho(\tilde v_{\rho}) \mathcal{K}^{\rho}_{\tilde P} ((x, t), \tilde \sigma_2) \leq C^{-2} \mu \rho(\tilde v_{\rho}) \mathcal{K}^{\rho}_{ P} ((x, t), \sigma) \leq \tilde v_{\rho}(x, t).
$$
Thus, by letting $\mu_0: = C^{-4} \mu \rho(\tilde v_{\rho}) > 0$, we obtain
$$
0 \leq \tilde v_{\rho}(x, t) - \mu_0 \mathcal{K}^{\rho}_{\tilde P} ((x, t), \tilde \sigma_2)
= \mathcal{K}_{\tilde P}^{\rho}((x, t), \tilde \sigma_1 ) - (\varepsilon_0 + \mu_0) \mathcal{K}_{\tilde P}^{\rho}((x, t), \tilde \sigma_2 ),
$$
which contradicts the definition of $\varepsilon_0$. Hence, $\tilde \sigma_1 = \tilde \sigma_2 $, and therefore, $\alpha_{\rho}$ is well defined. Moreover, \eqref{minimal_martin_eq_1} and Lemma~\ref{positive_martin_sol}, and the maximal $\vge$ trick imply that
$\tilde \sigma_1 \in \partial_{\tilde{L}, 1}^{\rho, m} D$, so $\alpha_{\rho} : \partial_{L, 1}^{\rho, m} D \rightarrow \partial_{\tilde L, 1}^{\rho, m} D $. By similar arguments,
$\alpha_{\rho}$ is injective, surjective and homeomorphism.
\end{proof}
We can now prove Theorem~\ref{martin-positive}.
\begin{proof}[Proof of Theorem~\ref{martin-positive}] 
Let $\{ M_{j}\}_{j = 0}^{\infty}$ be an exhaustion of $M$, and denote $M_{j}^{*} := M \setminus \overline{M_{j}}$.
Let $ Y_{n} = \{ (y_{n}, \tau_{n}) \}$ be a fundamental sequence converging to  $\sigma \in  \partial_{ L, 1}^{\rho, m} D$, and $\gt_n\to T$.

Fix $\vge>0$, and $x$ in $M$ and $t>0$.  Since
$V$ is a small perturbation with respect to $\kP$,  and since  $\kP$ is equivalent  to $k_{P - V}^{M}$, it follows from \eqref{eq_81} that there exists $j(\vge)$, and $n(\vge)$ such that  for $j>j(\vge)$, and $n>n(\vge)$, we have $ y_{n} \in M_{j(\vge)}^{*}$,  and for $t > \gt_n$ the following inequality holds
\begin{multline}\label{martin-eq-1}
\int_{\tau_{n}}^{t} \int_{M_{j}^{*}}   \frac{k_{P - V}^{M}(x, z, t  - s) |V(z)| \kP(z, y_{n}, s-\tau_{n})}{\kP(x, y_{n}, t - \tau_{n})}  \dz\ds\\[2mm]
=\int_{0}^{t - \tau_{n}} \!\!\!\!\int_{M_{j}^{*}}   \frac{k_{P - V}^{M}(x, z, t -\tau_{n} - \tilde{s}) |V(z)| \kP(z, y_{n}, \tilde{s})}{\kP(x, y_{n}, t - \tau_{n})}  \dz\,\mathrm{d}\tilde{s}  <   \vge.
\end{multline}
Since 
$$\lim_{n\to\infty}\frac{\kP(x, y_{n}, t - \tau_{n})}{\rho(\kP(\cdot \,, (y_{n},  \tau_{n})))}=\mathcal{K}_{P}^{\rho}((x, t), \sigma),$$
it follows that
$$
 \int_{\tau_{n}}^{t}\! \int_{M_{j}^{*}}   \frac{k_{P - V}^{M}(x, z, t  - s)
|V(z)| \kP(z, y_{n}, s-\tau_{n})}{\rho(\kP(\cdot \,, (y_{n},  \tau_{n})))} \dz\ds
   \leq   \vge M.
$$
Hence, the sequence of functions
$$\left\{f_n(z,s):= k_{P- V}^{M}(x, z, t - s) V(z) \frac{\kP(z, y_{n}, s-\tau_{n})}{\rho(\kP(\cdot \,, (y_{n}, \tau_{n})))}\right\}$$
is uniformly integrable and tight,  and
$$\lim_{n\to\infty}f_n(z,s)=k_{P- V}^{M}(x, z, t - s) V(z) \mathcal{K}_{P}^{\rho}((z, s), \sigma)\qquad \mbox{locally uniformly}.$$
In light of Corollary~\ref{cor_r_eq}, the resolvent equation implies
\begin{multline}\label{martin-duhamel}
\frac{k_{P - V}^{M}(x, y_{n}, t- \tau_{n})}{\rho(\kP(\cdot \,, (y_{n}, \tau_{n})))}   = \frac{\kP(x, y_{n}, t- \tau_{n})}{\rho(\kP(\cdot \,, (y_{n}, \tau_{n})))}   \\[2mm]
 + \!\int_{\tau_{n}}^{t}\!\! \int_{M}\!\! \frac{k_{P- V}^{M}(x, z, t - s) V(z) \kP(z, y_{n}, s-\tau_{n})}{\rho(\kP(\cdot \,, (y_{n}, \tau_{n})))}  \dz\!\ds.
\end{multline}
Hence, by the Vitali convergence theorem (\cite[p.~98]{RF}) we may pass to the limit to obtain
\begin{multline}\label{martin7}
\lim_{n \rightarrow \infty} \frac{k_{P - V}^{M}(x, y_{n}, t- \tau_{n})}{\rho(\kP(\cdot \,, (y_{n}, \tau_{n})))}   = \mathcal{K}_{P}^{\rho}((x, t), \sigma)  \\
 +  \int_T^{t }\int_{M}  k_{P- V}^{M}(x, z, t - s) V(z) \mathcal{K}_{P}^{\rho}((z, s), \sigma)  \dz\ds.
\end{multline}
Furthermore, since $\kP$ is equivalent to $\asymp k_{P-V}^M$, we may define (up to a subsequence)
\begin{align*}
\mathcal{K}_{P - V}^{\rho}((x, t), \alpha_{\rho}(\sigma))&:=
\lim_{n \rightarrow \infty} \frac{k_{P - V}^{M}(x, y_{n}, t- \tau_{n})}{\rho(k_{P-V}^M(\cdot \,, (y_{n}, \tau_{n}))}\, \in  \partial_{L-V, 1}^{\rho, m} D,   \\[2mm]
\mbox{ and }  \lambda_{\rho}(\sigma)&:=\lim_{n \rightarrow \infty} \frac{\rho(k_{P- V}^{M}(\cdot \,, (y_{n}, \tau_{n}))}{\rho( k_{P}^{M}(\cdot \,, (y_{n}, \tau_{n}))}\,,
\end{align*}
where $C^{-1} \leq \lambda_{\rho}(\sigma) \leq C$. Moreover, Proposition~\ref{minimal_martin_prop} implies that $\alpha_{\rho}(\sigma)$ is well defined, and consequently, the sequence $\{ (y_{n}, \tau_{n}) \}$ converges in
$D_{L - V}^{\rho}$ to the point $\alpha_{\rho}(\sigma) \in  \partial_{L-V, 1}^{\rho, m} D$.
Therefore, $\lambda_{\rho}(\sigma)$ does not depend on the subsequence.

Consequently, the following resolvent equation for minimal Martin functions holds true
\begin{multline}\label{M_w_d}
\lambda_{\rho}(\sigma) \mathcal{K}_{P  -  V}^{\rho}((x, t), \alpha_{\rho}(\sigma))   =  \mathcal{K}_{P}^{\rho}((x, t), \sigma)\\
  +  \int_{T}^{t }\int_{M}      k_{P -  V}^{M}(x, z, t  -  s) V(z) \mathcal{K}_{P}^{\rho}((z, s), \sigma)     \dz\ds.
\end{multline}
But since $\mathcal{K}_{P}^{\rho}((z, s), \sigma)=0$ for $0\leq s\leq T$, we have
\begin{multline}\label{M_w_d1}
\lambda_{\rho}(\sigma) \mathcal{K}_{P  -  V}^{\rho}((x, t), \alpha_{\rho}(\sigma))   =  \mathcal{K}_{P}^{\rho}((x, t), \sigma)\\
  +  \int_{0}^{t }\int_{M}      k_{P -  V}^{M}(x, z, t  -  s) V(z) \mathcal{K}_{P}^{\rho}((z, s), \sigma)     \dz\ds.
\end{multline}
 Define $$\mathcal{T}_{\rho} :  \{  \mathcal{K}_{P}^{\rho}(\cdot \,, \sigma)\mid \sigma \in  \partial_{L-V, 1}^{\rho, m} D \} \rightarrow  \mathcal{C}_{\rho,L-V}(D)$$ by
$$\mathcal{T}_{\rho}\mathcal{K}_{P}^{\rho}((x,t) \,, \sigma) := \lambda_{\rho}(\sigma) \mathcal{K}_{P - V}^{\rho}((x, t), \alpha_{\rho}(\sigma)).$$

Extend $\mathcal{T}_{\rho}$ to an affine transformation (with a slight abuse of notation)
$$\mathcal{T}_{\rho} : \mathrm{Conv}(\{  \mathcal{K}_{P}^{\rho}(\cdot \,, \sigma)\mid \sigma \in  \partial_{L-V, 1}^{\rho, m} D \})\rightarrow  \mathcal{C}_{\rho, L-V}(D) ,$$
 where $\mathrm{Conv}(A)$ is the convex hull of a set $A$. Then, using the parabolic Martin representation theorem and a standard continuity arguments (follows from continuity of the Martin kernel $\mathcal{K}^{\rho}_{P}(\cdot\,, \sigma))$,
  we extend $\mathcal{T}_\rho$ to a continuous affine transformation $ \mathcal{T}_{\rho} : \mathcal{C}_{\rho,L}(D) \rightarrow  \mathcal{C}_{\rho,L-V}(D) $ given by
\begin{equation}\label{extension}
(\mathcal{T}_{\rho}u)(x, t) :=   u(x, t) +  \int_0^{t }\int_{M}  k_{P- V}^{M}(x, z, t - s) V(z) u(z, s) \  \dz\ds.
\end{equation}
Recall that $\mathcal{C}_{L}(D) = \cup_{\rho} \mathcal{C}_{\rho, L}(D)$. Moreover, the mapping $\mathcal{T}_{\rho}$  given by \eqref{extension} does not depend on $\gr$. Therefore, we may extend the family of transformations $\{\mathcal{T}_{\rho}\}_\gr$  to a continuous affine transformation $ \mathcal{T} : \mathcal{C}_{L}(D) \rightarrow  \mathcal{C}_{L-V}(D) $  by
$\mathcal{T}u:=\mathcal{T}_{\rho}u$ for $u\in \mathcal{C}_{\rho, L}(D)$, so, we get \eqref{extension3}.


Analogously, define $  \mathcal{S} : \mathcal{C}_{L- V}(D) \rightarrow  \mathcal{C}_{L}(D) $ by
 \begin{equation}\label{extension1}
( \mathcal{S}v)(x, t) :=   v(x, t) -  \int_0^{t }\int_{M}  k_{P}^{M}(x, z, t - s) V(z) v(z, s)  \dz\ds.
\end{equation}
We claim that $\mathcal{S}\mathcal{T} = \mathrm{Id}_{\mathcal{C}_{L}(D)}$ and $\mathcal{T} \mathcal{S} = \mathrm{Id}_{\mathcal{C}_{L-V}(D)}$, where $\mathrm{Id}_A$ is the identity map on the set $A$. We show that $\mathcal{S} \mathcal{T} = \mathrm{Id}_{\mathcal{C}_{L}(D)}$ and the second assertion follows similarly.

For $u \in \mathcal{C}_{L}(D)$ we have
\begin{align*}
&(\mathcal{S} \mathcal{T} u)(x, t)   = \mathcal{S} \left( u(x, t) + \int_0^{t}\int_{M}  k_{P - V}^{M}(x, y, t-\alpha) V(y) u(y, \alpha)   \dy\da \right) \\
& = u(x, t) + \int_0^{t}\int_{M}  k_{P - V}^{M}(x, y, t-\alpha) V(y) u(y, \alpha)   \dy\da \\
& - \int_0^{t}\int_{M}  k_{P }^{M}(x, y, t-\alpha) V(y) u(y, \alpha)   \dy\da \\
& -\!\! \int_0^{t}\!\!\int_{M} \!\! k_{P}^{M}(x, y, t\!-\!\alpha) V(y)\!
\left(\int_{0}^{\alpha}\!\!\int_{M}  \!\!k_{P\! -\! V}^{M}(y, z, \alpha \!-\! s) V(z) u(z, s) \ \! \dz\!\ds  \!\right)\!\! \dy\da.
\end{align*}
Using Fubini's theorem and the resolvent equation for the heat kernel $k_{P-V}^M$, we obtain
\begin{align*}
 &\int_0^{t}\int_{M}  k_{P - V}^{M}(x, z, t- \ga) V(z) u(z, \ga)   \dz\da\\
 &=\int_0^{t}\int_{M}  k_{P }^{M}(x, y, t-\alpha) V(y) u(y, \alpha)   \dy\da \\
& +\!\! \int_0^{t}\!\!\int_{M} \!\! k_{P}^{M}(x, y, t\!-\!\alpha) V(y)\!
\left(\int_{0}^{\alpha}\!\!\int_{M}  \!\!k_{P\! -\! V}^{M}(y, z, \alpha \!-\! s) V(z) u(z, s) \  \! \dz\!\ds  \!\right)\!\! \dy\da.
\end{align*}
 Thus,
$(\mathcal{S} \mathcal{T} u)(x, t)  =  u(x, t)$.
\end{proof}
\begin{rem}\label{finalrem}{\em In the general case, where
 $D = M \times (a, b)$, with $-\infty \leq a<b\leq\infty$, the transformations  $\mathcal{T}$ and $\mathcal{S}$, given  by \eqref{extension} and \eqref{extension1} (with $a$ replacing $0$), are well defined affine homeomorphisms even if $a=-\infty$ (thanks to the $3k$-inequality (see Lemma~\ref{smallper1})).
 }
 \end{rem}
\section{Concluding remarks}\label{concludingrem}
This section consists of three subsections. In the first one, we briefly extend our results to a certain class of {\em nonsymmetric} operators, while in Subsection~\ref{subsec_ex} we provide several examples to illustrate our results. Finally, in Subsection~\ref{subsec_op} we pose some open problems.
\subsection{Quasi-symmetric heat kernels}\label{subsec_qso}
The positive minimal heat kernel $\kP$ is said to be \emph{quasi-symmetric} if
\begin{equation}\label{quasi-symm}
\kP(x, y, t) \asymp \kP(y, x, t) \qquad \forall x, y \in M, \ t > 0.
\end{equation}
\begin{rem}\label{rem_ancona}{\em
In \cite{Ancona} A.~Ancona introduced the notion of quasi-symmetric operators (with respect to its Na\"{i}m kernel). Clearly, if the heat kernel $\kP$ is quasi-symmetric, and the operator $P$ is subcritical, then $P$ is quasi-symmetric in the sense of Ancona.
}
\end{rem}
\begin{lem}\label{quasi-monotone}
Suppose that the heat kernel $\kP$ is quasi-symmetric. Then there exists a constant $C > 0$ such that
\begin{equation}\label{monotone-quasi}
\kP(x, y, t) \leq C \left( \kP(x, x, t) \right)^{\frac{1}{2}}\left( \kP(y, y, t) \right)^{\frac{1}{2}} \qquad \forall x, y \in M, \ t > 0.
\end{equation}
\end{lem}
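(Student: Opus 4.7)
The plan is to imitate the standard Cauchy--Schwarz argument that proves the symmetric case (Lemma~\ref{properties}, part~(5)), using the quasi-symmetry hypothesis \eqref{quasi-symm} to compensate for the lack of exact symmetry at each step. Denote by $C_{1}>0$ a constant witnessing \eqref{quasi-symm}, i.e., $k_{P}^{M}(x,z,\tau)\leq C_{1}k_{P}^{M}(z,x,\tau)$ for all $x,z\in M$ and all $\tau>0$.

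First I would apply the Chapman--Kolmogorov identity (part~(1) of Lemma~\ref{properties}) at time $t/2+t/2=t$ to write
\begin{equation*}
k_{P}^{M}(x,y,t)=\int_{M}k_{P}^{M}(x,z,t/2)\,k_{P}^{M}(z,y,t/2)\dz .
\end{equation*}
Next, I would apply the Cauchy--Schwarz inequality in $L^{2}(M,\dx)$ to obtain
\begin{equation*}
k_{P}^{M}(x,y,t)\leq\left(\int_{M}k_{P}^{M}(x,z,t/2)^{2}\dz\right)^{\!1/2}\!\left(\int_{M}k_{P}^{M}(z,y,t/2)^{2}\dz\right)^{\!1/2},
\end{equation*}
the finiteness of the integrals being guaranteed a posteriori by the next step.

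The key move is to dispose of the squares by trading one of the two identical factors for its quasi-symmetric partner. Using $k_{P}^{M}(x,z,t/2)\leq C_{1}k_{P}^{M}(z,x,t/2)$ and then Chapman--Kolmogorov again,
\begin{equation*}
\int_{M}k_{P}^{M}(x,z,t/2)^{2}\dz\leq C_{1}\int_{M}k_{P}^{M}(x,z,t/2)\,k_{P}^{M}(z,x,t/2)\dz=C_{1}k_{P}^{M}(x,x,t),
\end{equation*}
and analogously, using $k_{P}^{M}(z,y,t/2)\leq C_{1}k_{P}^{M}(y,z,t/2)$,
\begin{equation*}
\int_{M}k_{P}^{M}(z,y,t/2)^{2}\dz\leq C_{1}\int_{M}k_{P}^{M}(y,z,t/2)\,k_{P}^{M}(z,y,t/2)\dz=C_{1}k_{P}^{M}(y,y,t).
\end{equation*}

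Substituting the last two bounds into the Cauchy--Schwarz estimate yields the claim with $C=C_{1}$. There is no genuine obstacle in this argument: the only subtlety is that one cannot simply recognise $\int k_{P}^{M}(x,z,t/2)^{2}\dz$ as an on-diagonal value of $k_{P}^{M}$ (which is what symmetry would give for free), and the trick is precisely to use quasi-symmetry once to convert a square into the Chapman--Kolmogorov-friendly product $k_{P}^{M}(x,z,t/2)k_{P}^{M}(z,x,t/2)$.
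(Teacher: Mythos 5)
Your argument is correct and is essentially identical to the paper's proof: Chapman--Kolmogorov at $t/2+t/2$, Cauchy--Schwarz, then quasi-symmetry to convert each squared factor into the product $\kP(x,z,t/2)\kP(z,x,t/2)$ whose integral Chapman--Kolmogorov identifies as the on-diagonal value $\kP(x,x,t)$. The resulting constant $C=C_1$ matches as well.
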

\begin{proof}
Using the Chapman-Kolmogorov equation and the H\"older inequality, we see that
\begin{align*}
& \kP(x, y, t)   = \int_{M} \kP (x, z, \frac{t}{2}) \kP(z, y, \frac{t}{2}) \dz  \\
& \leq \left( \int_{M} \left( \kP (x, z, \frac{t}{2}) \right)^2  \dz \right)^{\frac{1}{2}} \left( \int_{M} \left( \kP (z, y, \frac{t}{2}) \right)^2  \dz \right)^{\frac{1}{2}} \\
& \leq C \left( \int_{M}  \kP (x, z, \frac{t}{2}) \kP(z, x, \frac{t}{2})  \dz \right)^{\frac{1}{2}} \left( \int_{M}  \kP (y, z, \frac{t}{2}) \kP(z, y, \frac{t}{2})  \dz \right)^{\frac{1}{2}} \\
& = C \left( \kP(x, x, t) \right)^{\frac{1}{2}} \left( \kP(y, y, t) \right)^{\frac{1}{2}}.
\qedhere
\end{align*}
\end{proof}
\begin{defi}{\em
The heat kernel $\kP$ is said to be {\em quasi-monotone} at $x_{0} \in M$ if for any  $T > 0$ there exists $C:= C(x_{0}, T) > 0$ such that
\[
 \kP(x_{0}, x_{0}, t_{2}) \leq C \kP(x_{0}, x_{0}, t_{1}), \quad  \forall \  t_{2} \geq t_{1} > T.
 \]
}
\end{defi}
Clearly, the heat kernel of a symmetric operator is quasi-symmetric and also quasi-monotone at all $x\in M$.
\begin{rem}{\em Suppose that $\kP$ is quasi-symmetric and also quasi-monotone at a point $x_0\in M$. Following the proof of Davies in \cite[Theorem~10]{DA1}, it follows that such $\kP$ satisfies  the Davies-Harnack inequality \eqref{eqharnack}.
In light of Lemma~\ref{quasi-monotone}, we can analogously deduce theorems~\ref{main}, \ref{spmain}, (and hence also Theorem~\ref{martin-positive}) for the class of  \emph{quasi-symmetric} heat kernels which are  \emph{quasi-monotone} (and satisfy \eqref{assumption1}).
}
\end{rem}
\begin{rem}
{\em
It should be noted that we are unaware of any example of a nonsymmetric operator whose heat kernel is quasi-symmetric but whose heat kernel is not equivalent to a symmetric
one. Conversely, if the heat kernel of any nonsymmetric operator $P$ is equivalent to the heat kernel of a symmetric operator in $M$, then the heat kernel of $P$
is quasi-monotone at any point $x_0\in M$, and quasi-symmetric (and $P$ is  quasi-symmetric as well).
}
\end{rem}
\subsection{Examples}\label{subsec_ex}
In the present subsection we give various examples of Riemannian manifolds $M$ and heat kernels $\kP$ defined on $M$ which satisfy our main assumption \eqref{assumption1} of  theorems~\ref{main} and \ref{spmain} (the doubling condition). Hence, our main results of the paper apply to these cases.

The study of heat kernel estimates has a long history (see for example \cite{DA, greg1,Ouhabaz}. In particular, proving pointwise two-sided Gaussian estimates for the heat kernel was a subject of intense research for the past few decades. It started with the celebrated works of Nash \cite{JN} and Aronson~\cite{AR}, where two-sided Gaussian estimates were obtained for the heat kernel of a uniformly elliptic operator in divergence form in $\R^N$. For such operators we obtain:
\begin{example}\label{ex1}{\em
Consider a parabolic equation of the form $\frac{\partial u}{\partial t} + P u = 0$ on $\R^N\times (0,\infty)$,  where $N\geq 3$ and
\begin{equation}\label{divform1}
P = -\sum_{i, j = 1}^{N} \frac{\partial}{\partial x_{i}} \left( a_{ij}(x) \frac{\partial}{\partial x_{j}} \right)
\end{equation}
is a uniformly elliptic operator with real, bounded coefficients satisfying the assumptions of Theorem~\ref{main}. Denote by $k_P^{\R^N}$ the corresponding positive minimal heat kernel.  Aronson
\cite[Theorem~7]{AR} proved that $k_P^{\R^N}$ admits
two sided Gaussian estimates, i.e., there exist positive constants $C_{1}, C_{2}, C_{3}, C_{4} $ such that
\begin{equation}\label{twosided}
\frac{C_{3}}{t^{N/2}}\exp\left(- \frac{|x - y|^2}{C_{4}t}\right) \leq k_P^{\R^N}(x, y, t) \leq  \frac{C_{1}}{t^{N/2}}\exp\left(- \frac{|x - y|^2}{C_{2}t}\right)
\end{equation}
for all $x\in \R^N$ and $t > 0$. Estimate  \eqref{twosided} readily implies that
\[
k_P^{\R^N}(x, x, \frac{t}{2}) \leq C 2^{\frac{N}{2}} k_P^{\R^N} (x, x, t) \qquad \forall x\in \R^N \mbox{ and } t > 0,
\]
and hence, $k_P^{\R^N}$ satisfies the doubling condition \eqref{assumption1}. Therefore, if $V$ is a small perturbation of $k_P^{\R^N}$, then there exists $\varepsilon_{0} > 0$ such that $k_{P_\vge}^{\R^N} \asymp k_P^{\R^N}$ for all $|\vge| <  \vge_{0}$.
}
\end{example}
\begin{example}[Periodic operator]\label{ex11}{\em
Consider a uniformly elliptic operator $P$ on $\R^N$, $N\geq 3$ of the form
\[
P = -\sum_{i, j = 1}^{N} \frac{\partial}{\partial x_{i}} \left( a_{ij}(x) \frac{\partial}{\partial x_{j}} \right)+U(X).
\]
Assume that $P\geq 0$ in $\R^N$, and that the coefficients of $P$ satisfy the assumptions of Theorem~\ref{main}. Suppose that the coefficients of $P$ are {\em periodic} in $x_1,\ldots ,x_n$ with period $1$. Without loss of generality we may assume that $\gl_0(P,\mathbf{1},\R^N)=0$. Then the equation $Pu=0$ in $\R^N$ admits a unique (up to a multiplicative constant) positive solution $\gf$. Moreover, (in the symmetric case) $\gf$ is periodic in $x_1,\ldots ,x_n$ with period $1$ \cite{Ag}.

Using the ground state transform we get the operator
$$P_\gf:=(\gf)^{-1}P\gf=-(\gf)^{-2}(x)\sum_{i, j = 1}^{N} \frac{\partial}{\partial x_{i}} \left( \gf^2(x) a_{ij}(x) \frac{\partial}{\partial x_{j}} \right),$$
whose heat kernel satisfies $k_{P_\gf}^{\R^N}(x,y,t)=(\gf)^{-1}(x)k_{P}^{\R^N}(x,y,t)\gf(y)$.

Consequently, $P_\gf$ is, in fact, of the form \eqref{divform1} on $L^2(\R^N,\vgf^2\dx)$, and therefore, $k_{P_\gf}^{\R^N}$ satisfies assumption \eqref{assumption1} which in turn implies that $k_P^{\R^N}$ also satisfies \eqref{assumption1}.
Therefore, if $V$ is a small perturbation of $k_P^{\R^N}$, then there exists $\varepsilon_{0} > 0$ such that $k_{P_\vge}^{\R^N} \asymp k_P^{\R^N}$ for all $|\vge| <  \vge_{0}$.
}
\end{example}
Next, we consider perturbations of the Laplace-Beltrami operators
on noncompact Riemannian manifolds. Following the seminal work of Aronson, the question of estimating the heat kernel on Riemannian manifolds was extensively studied by many authors. One of the most general estimates
of heat kernels $\kP$ for the Laplace-Beltrami operators was proved by P.~Li and S.~T.~Yau \cite[Corollary~3.1 and Theorem~4.1]{LY} under a suitable curvature assumption. We use these celebrated  results in the following example.
\begin{example}\label{ex2}{\em
Let $(M, g)$ be a complete, connected, noncompact Riemannian manifold of dimension $N$ with nonnegative Ricci curvature. Let $ P := -\Delta_{g}$ denote the (positive) Laplace-Beltrami operator on $M$ and let $\kP$ denote the corresponding heat kernel. Then by \cite[Corollary~3.1 and Theorem~4.1]{LY} there exist positive  constants $C_{1}, C_{2}, C_{3}, C_{4}$ such that
\begin{equation}\label{curvatureestimate}
\frac{C_{3}}{V(x, \sqrt t)}\,\mathrm{e}^{\left(-\frac{d^2(x, y)}{C_{4}t}\right)} \leq \kP(x, y, t) \leq \frac{C_{1}}{V(x, \sqrt t)}\,\mathrm{e}^{\left(-\frac{d^2(x, y)}{C_{2}t}\right)}
\end{equation}
for all $x, y \in M$ and $t > 0$, where $d(x, y)$ is the geodesic distance on $M$ and $V(x, r)$ is the Riemannian volume of the geodesic ball $B(x, r) = \{ y \in M : d(x, y) < r \}$.
Moreover, under the above assumptions, $M$ satisfies the {\em doubling volume property} \eqref{DC} (see \cite[Theorem~15.21]{greg0}), and hence, \eqref{assumption1} is satisfied.


Alternatively, under the above assumptions E.~B.~Davies proved \cite[Corollary~5.3.6]{DA} that the positive minimal heat kernel $\kP$ satisfies the following  global exponential-type upper bound
\begin{equation*}
\kP(x, x, t + s) \leq \kP(x, x, t) \leq \kP(x, y, t + s) \left( \frac{t + s}{t} \right)^{\frac{N}{2}} \mathrm{e}^{\frac{d(x, y)^2}{4s}}\qquad \forall t,s>0.
\end{equation*}
In particular, for $ t = s$, we have
\begin{equation}\label{EBD}
\kP(x, x, t) \leq 2^{\frac{N}{2}} \kP(x, x, 2t)\qquad \forall t>0.
\end{equation}
Hence, \eqref{assumption1} is satisfied. Thus, if $P$ is subcritical our main results hold true.
 }
\end{example}
An interesting question is to find `minimal' geometric assumptions on $M$ that imply Gaussian estimates of the type \eqref{curvatureestimate}. The upper bound in \eqref{curvatureestimate} is known to be equivalent to a certain Faber-Krahn type inequality (see \cite{greg1,greg}). A well known geometric condition related to  the {\em on-diagonal} lower bound in \eqref{curvatureestimate} is the doubling volume property \eqref{DC}.
In particular, in the next examples we do not assume any a priori curvature assumption on the manifold.
\begin{example}\label{ex3}{\em
Let $(M, g)$ be a complete, connected, noncompact manifold of dimension $N$, and let $ P := -\Delta_{g}$ denote the Laplace-Beltrami operator which satisfy the following properties:

1. For some  $ \ x_{0} \in M$,  there exists $ C > 0$    such that the following doubling volume property holds
\begin{equation}\label{DC}
 \ V(x_{0}, 2r) \leq C V(x_{0}, r)  \qquad  \forall r> 0.
\end{equation}

2. $P$ is subcritical in $M$.

\medskip

3. There exists $C_1 > 0$ such that  the following on-diagonal upper bound estimate holds true
\[
\kP(x_0, x_0, t) \leq \frac{C_1}{V(x_{0}, \sqrt{t})} \qquad   \forall t > 0.
\]
Then by \cite{CA} there exists $c>0$ such that
\[
\kP(x_0, x_0, t) \geq \frac{c}{V(x_{0}, \sqrt{t})} \qquad \forall t>0,
\]
and in particular, there exists $C>0$ such that
\[
\kP(x_0, x_0, t/2) \leq   C \kP(x_0, x_0, t)\qquad \forall t > 0.
\]
 }
\end{example}
\begin{example}\label{ex6}{\em
Let $M$ be a complete, connected, noncompact {\em weighted} Riemannian manifold of dimension $N$. Consider the weighted Laplacian $P$ on $M$, and
denote by  $\kP$ the corresponding heat kernel.
Then the two-sided Gaussian estimates \eqref{curvatureestimate} is equivalent to the validity of the {\em uniform} parabolic Harnack inequality (PHI) (see, \cite{greg1,SC}). We refer to \cite{greg1,SC1,SC} for examples of weighted manifolds satisfying (PHI).
 }
\end{example}
\begin{example}\label{ex4}{\em
 In stochastic processes, the transition density of the random motion naturally leads to the notion of the heat semigroup and hence to the heat kernel. In particular, Dirichlet forms of many families of fractals admit continuous heat kernels that satisfy sub-Gaussian estimates. By a {\em sub-Gaussian kernel} $\tilde{g}$, we mean
 \begin{equation} \label{subgaussian}
\tilde g(x, y, t) := \frac{C}{t^{\frac{\alpha}{\beta}}} \exp{\left( -c\left( \frac{d^{\beta}(x, y)}{t} \right) \right)^{\frac{1}{\beta -1}}},
\end{equation}
where $\alpha > 0,$ $\beta > 1$ are two parameters that come from the geometric properties of the underlying fractal.
The notion of sub-Gaussian estimates was introduced by  M.~T.~Barlow, and E.~A.~Perkins in \cite{BP}. A.~Gregor'yan and A.~Telcs \cite{greg} developed sub-Gaussian estimates for the heat kernel  on metric spaces under suitable assumptions. It follows that complete Riemannian manifolds which admit  two-sided sub-Gaussian estimates for the corresponding heat kernels satisfy our assumption \eqref{assumption1}.
}
\end{example}
We give an example of a manifold with {\em  negative} Ricci curvature, such that our assumption \eqref{assumption1} holds true.
\begin{example}\label{ex5}{\em
Cartan-Hadamard manifolds whose sectional curvatures are bounded above by a \it strictly negative \rm constant, are known to admit a Poincar\'e type (or $L^2$-spectral gap) inequality. Namely, the generalized principal eigenvalue
 \begin{equation*}
 \lambda_{0} = \inf_{{u \in C_{c}^{\infty}(M) \setminus} \{0\}} \frac{\int_{M} |\nabla_{g} u|^2\dv_{g}}{\int_{M} u^{2}\dv_{g}}
 \end{equation*}
is strictly positive.

The  classical example of such a manifold is of course the {\it hyperbolic space}  ${\mathbb H}^N$, where $\lambda_{0} =(N-1)^2/4$.
  Let $ M= \mathbb{H}^{3} $ be the hyperbolic space of dimension 3, then the heat kernel of $ P:= -\Delta_{\mathbb{H}^{3}} - \lambda_{0},$ is given explicitly by
  \[
  \kP(x, y, t) = \left(\frac{1}{4 \pi t} \right)^{-\frac{3}{2}} \frac{d(x, y)}{\sinh d(x, y)}\, \mathrm{e}^{-\frac{d(x, y)^2}{4 t}},
  \]
where $d(x, y)$ denotes the hyperbolic distance between $x$ and $y.$  Hence clearly, $\kP(x, x, \frac{t}{2}) \leq  2^{\frac{3}{2}} \kP(x, x, t)$ holds true for all $t>0$ and $x\in{\mathbb H}^3$ . For higher dimension $N > 3$, the heat kernel of the operator
$P: = -\Delta_{\mathbb{H}^{N}} -  \lambda_{0}$ satisfies
\[
\kP(x, y, t) \!\asymp\!\!  \left(\frac{1}{4 \pi t} \right)^{-\frac{N}{2}}\!\! \mathrm{e}^{- \left[\!  \frac{(N-1) d(x, y)}{2} + \frac{d(x, y)^2}{4t} \right] } \!\!\left(1 + d(x, y) + t  \right)^{\frac{N-3}{2}} \left(1 + d(x, y) \!\right),
\]
 and hence, $\kP(x, x, \frac{t}{2}) \leq  C \kP(x, x, t)$ holds true for all $t>0$ and $x\in{\mathbb H}^N$. Consequently, the results of the present paper hold true for such $P$, and $N\geq 3$. In particular, for any \emph{small perturbation potential} $V$, there exists $\vge_{0} > 0$ such that
 $k_{-\Delta_{\mathbb{H}^{N}} - \vge V}^{\mathbb{H}^{N}} \asymp k_{-\Delta_{\mathbb{H}^{N}}}^{\mathbb{H}^{N}}$ for all $|\vge| < \vge_{0}.$
  }
 \end{example}
 \begin{example}\label{ex7}{\em
Let $P_i$  be a symmetric elliptic operator defined on $M_i$ such that $\lambda_0(P_i,\mathbf{1},M_i)=0,$ where $i=1,2$. Consider the skew product operator $ P:=P_1\times I_1 +I_2\times P_2$ on $M: = M_1\times M_2,$ where $ I_i $  is the identity operator on $ M_i.$ Then
\[
\kP(x, y, t) = k_{P_{1}}^{M}(x_{1}, y_{1}, t) k_{P_{2}}^{M}(x_{2}, y_{2}, t),
\]
where $x=(x_{1}, x_{2}), y=(y_{1}, y_{2})\in M$. If both operators are subcritical and satisfy \eqref{assumption1}, then clearly $P$ is subcritical in $M,$  and its heat kernel satisfies \eqref{assumption1}. Moreover,
if $ P_1 $ is positive-critical in $ M_1,$ and  $ P_2 $ is subcritical in $M_2,$ and its heat kernel $k_{P_{2}}^{M}$ satisfies \eqref{assumption1}, then $P$ is subcritical in $M$,  and by Remark~\ref{rem}, $\kP$ satisfies \eqref{assumption1}. We mention also the case of a twisted tube \cite{GKP} (which is a perturbation of a product space), for which  \eqref{assumption1} is also satisfied.
 }
\end{example}
An anonymous colleague has kindly pointed out to us that our results hold true for the case of universal cover of a compact manifold of negative curvature. Indeed, we have:
\begin{example}\label{ex8}{\em
Let $M$ be the universal cover of a compact manifold of negative curvature. Ledrappier and Lim in \cite{LL} proved recently that the heat kernel of the Laplacian in $M$ satisfies
$$
\lim_{t \rightarrow \infty} t^{\frac{3}{2}} \mathrm{e}^{\lambda_0 t} k_{- \Delta_{g}}^M(x, y, t) = C(x, y),
$$
where $C(x,y)$ is a strictly positive formal eigenfunction of $-\Delta_{g}$ with an eigenvalue $\gl_0$. In particular, the heat kernel of the shifted Laplacian  $P:= -\Delta_g - \lambda_0$ is subcritical in M and satisfies \eqref{assumption1}. Hence, our main results hold true for $P$ on $M$.
}
\end{example}
\subsection{Open problems}\label{subsec_op}
We conclude the paper with some problems that remain open.
\begin{enumerate}
 \item Do theorems~\ref{main} and \ref{spmain} remain true without assuming the doubling condition \eqref{assumption1}? Note that affirmative answers in particular imply that in the class of small perturbations with respect to the heat kernel $\kP$ such that $\gl_0(P,\mathbf{1},M)=0$, the following holds true
     $$S_+(P,V,M)=\{\vge\in \R \mid k_{P_\vge}^M\asymp \kP\}.$$
 \item Prove or disprove Conjecture~\ref{conjequival} in the general  nonsymmetric case.
 \item Study the relationships between the notion of (semi)small perturbations with respect to the Green function and with respect to the heat kernel.
 \item Recall that in the context of (semi)small perturbations with respect to Green functions if $G$ satisfies a certain quasi-metric property,  then the semismallness of a perturbation implies smallness \cite{YP99}. It would be interesting  to find an analogous condition on semismall perturbations with respect to $\kP$ that guarantees smallness.   We remark that, as in the case of small perturbations with respect to Green functions,  we are not aware of any example of a semismall perturbation with respect to a heat kernel which is not a small perturbation.
 \end{enumerate}
Apart from the above open problems related directly to the equivalence of heat kernels, we mention below a far reaching conjecture by M.~Fraas, D.~Krej\v{c}i\v{r}\'{\i}k and Y.~P. regarding the strong ratio limit of the quotients of heat kernels of subcritical and critical operators.  Note that if $P_+$ and $P_0$ are subcritical and critical operators in $M$, respectively, then obviously, $k_{P_+}^M\not\asymp k_{P_0}^M$, and $$\liminf_{t\to\infty}\frac{k_{P_+}^M(x,y,t)}{k_{P_0}^M(x,y,t)}=0.$$
\begin{conjecture}[{\cite[Conjecture~1]{FKP}}]\label{conjMain}
Let $P_+$ and $P_0$ be respectively subcritical and critical
operators in $M$. Then
%
\begin{equation}\label{eqconjMain}
\lim_{t\to\infty}\frac{k_{P_+}^M(x,y,t)}{k_{P_0}^M(x,y,t)}=0
\end{equation}
locally uniformly in $M\times M$.
\end{conjecture}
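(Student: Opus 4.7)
The plan is to prove Conjecture~\ref{conjMain} by a case analysis on the generalized principal eigenvalues $\lambda_0(P_\pm,\mathbf{1},M)$ and on the type of criticality of $P_0$. First I would observe that necessarily $\lambda_0(P_0,\mathbf{1},M)=0$: if $\lambda_0(P_0)>0$, then \eqref{large-eigen} would force $k_{P_0}^M$ to decay exponentially in $t$, making $\int_0^\infty k_{P_0}^M(x,y,\tau)\dtau<\infty$ and contradicting the criticality of $P_0$ (Definition~\ref{def_crit}). Hence $\lambda_0(P_0)=0$ and, since $P_+\geq 0$, one has $\lambda_0(P_+)\geq 0$. If moreover $\lambda_0(P_+)>0$, then \eqref{large-eigen} yields $t^{-1}\log\bigl[k_{P_+}^M(x,y,t)/k_{P_0}^M(x,y,t)\bigr]\to -\lambda_0(P_+)<0$, giving pointwise convergence to zero; local uniformity then follows by applying the parabolic Harnack inequality separately to each of the two nonnegative solutions on parabolic cylinders shrinking around $(x,y,t)$.

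It remains to treat the balanced case $\lambda_0(P_+)=\lambda_0(P_0)=0$. In the positive-critical subcase for $P_0$, Remark~\ref{rem} yields
\[
\lim_{t\to\infty} k_{P_0}^M(x,y,t)=\frac{\phi_0(x)\phi_0^*(y)}{\int_M \phi_0^*(z)\phi_0(z)\dz}>0
\]
locally uniformly, while the subcritical $P_+$ is \emph{not} positive-critical at its principal eigenvalue $0$ (since it is not critical at all), so $k_{P_+}^M(x,y,t)\to 0$ locally uniformly by the same remark; the ratio therefore vanishes locally uniformly. The essential difficulty is thus the null-critical subcase, in which both kernels tend to zero subexponentially and one must show that the subcritical kernel decays strictly faster.

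For the null-critical case I would attempt two complementary strategies. The first is a Doob-type transform: letting $\phi_0,\phi_0^*$ denote the ground states of $P_0$ and $P_0^\star$, the kernel $\tilde k_0(x,y,t):=k_{P_0}^M(x,y,t)/\bigl(\phi_0(x)\phi_0^*(y)\bigr)$ is the transition density of a null-recurrent conservative Markov process on $M$ with reference measure $\phi_0\phi_0^*\dx$. One then seeks a positive solution $h$ of $P_+h=0$ permitting an analogous transform of $k_{P_+}^M$, reducing the question to contrasting a null-recurrent transition density with a transient one. The second strategy is integrability-based: $\int_0^\infty k_{P_+}^M(x,y,t)\dt=G_{P_+}^M(x,y)<\infty$ while $\int_0^\infty k_{P_0}^M(x,y,t)\dt=\infty$, so if the ratio failed to vanish along some sequence $t_n\to\infty$, I would try to propagate the resulting pointwise lower bound $k_{P_+}^M(x_0,y_0,t_n)\geq c\,k_{P_0}^M(x_0,y_0,t_n)$ to an interval around each $t_n$ via Lemma~\ref{properties}(6) (log-convexity of the diagonal in the symmetric case) combined with Lemma~\ref{harnack} (Davies-Harnack), thereby contradicting the finiteness of $G_{P_+}^M$.

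The main obstacle in both approaches is that $P_+$ lacks a canonical ``minimal'' positive solution playing the role of a ground state, and log-convexity of the diagonal alone does not cleanly turn a pointwise lower bound at $t_n$ into an interval lower bound on $k_{P_+}^M(x_0,y_0,\cdot)$ off the diagonal. This is precisely why the statement is posed as open in \cite{FKP} and in the present paper, and I would expect any full proof to require a genuinely new ingredient --- perhaps a fine analysis of the parabolic Martin boundary at infinity, or sharp elliptic ratio limits for Green functions that can be integrated back into the heat kernel --- beyond the tools developed here.
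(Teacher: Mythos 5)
This statement is labelled as a \emph{conjecture} in the paper (quoting \cite[Conjecture~1]{FKP}), and the paper offers no proof of it in general; your proposal is therefore rightly not a proof, and you correctly say so. Your partial reductions are sound: criticality of $P_0$ forces $\lambda_0(P_0,\mathbf{1},M)=0$ via \eqref{large-eigen} and Definition~\ref{def_crit}; if $\lambda_0(P_+,\mathbf{1},M)>0$ the exponential rate separation in \eqref{large-eigen} kills the ratio; and if $\lambda_0(P_+,\mathbf{1},M)=0$ with $P_0$ positive-critical, Remark~\ref{rem} gives a positive limit for the denominator and a vanishing limit for the numerator. You also correctly isolate the null-critical case as the genuine obstruction, where both kernels vanish subexponentially and neither the Doob-transform nor the integrability strategy closes without new input; in particular, log-convexity of $t\mapsto k_{P_+}^M(x_0,x_0,t)$ plus Davies--Harnack does not by itself convert a lower bound along a sequence $t_n$ into a lower bound on intervals of fixed length, because the critical kernel $k_{P_0}^M(x_0,y_0,t_n)$ may itself oscillate in scale, so the contradiction with $G_{P_+}^M<\infty$ does not follow.

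For comparison: what the paper actually establishes in this direction is only Lemma~\ref{thmcond1}, which proves the conjecture for the special perturbative pairs $P_+=P_0+V$ with $V=V_+-V_-$, under the hypothesis that $k_{P_+}^M$ satisfies the $3k$-inequality with respect to $V_-$. The mechanism there is entirely different from anything you sketch: the $3k$-inequality and Lemma~\ref{lem1} give $k_{P_+}^M\asymp k_{P_++V_-}^M=k_{P_0+V_+}^M\leq k_{P_0}^M$, i.e.\ a global domination $k_{P_+}^M\leq C\,k_{P_0}^M$, and the strong ratio limit \eqref{eqconjMain6} is then imported from \cite[Theorem~3.1]{FKP} rather than derived from large-time asymptotics of each kernel separately. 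So the conjecture remains open in the generality stated, and your assessment of where the difficulty lies is accurate.
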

It follows that for perturbations of the type studied  in the present paper, Conjecture~\ref{conjMain} holds true.
\begin{lem}[{cf. \cite[Theorem~5.4]{FKP}}]\label{thmcond1}
Let $P_0$ be a symmetric critical operator in $M$. Assume that $V=V_+ - V_-$ is a potential such that $V_\pm \geq 0$ and  $P_+:=P_0+V$ is subcritical in $M$.

Assume further that $k_{P_+}^M$ satisfies the $3k$-inequality with respect to $V_-$. Then  there exists a positive constant $C$ such that
\begin{equation}\label{Ass1n}
    k_{P_+}^M(x,y,t)\leq C k_{P_0}^M(x,y,t)\qquad \forall x,y\in M \mbox{ and } t>0.
    \end{equation}
    Moreover, there holds
    \begin{equation}\label{eqconjMain6}
\lim_{t\to\infty}\frac{k_{P_+}^M(x,y,t)}{k_{P_0}^M(x,y,t)}=0,
\end{equation}
locally uniformly in $M \times M.$

In particular, Conjecture~\ref{conjMain} holds true for $P_+:=P_0+V$, where $V$ is any nonzero nonnegative potential.
  \end{lem}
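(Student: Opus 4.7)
The argument splits into two parts. First, I derive the pointwise upper bound \eqref{Ass1n} by chaining three perturbation comparisons. Second, I upgrade this bound to the ratio limit \eqref{eqconjMain6} using the integrability dichotomy between $P_+$ (subcritical, integrable kernel) and $P_0$ (critical, non-integrable kernel), together with parabolic regularity.

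\textbf{Upper bound \eqref{Ass1n}.} The hypothesis supplies the $3k$-inequality for $k_{P_+}^M$ with respect to $V_-$. Theorem~\ref{thm_3k_implies_eq}(1) then yields $\vge_0>0$ with $k_{P_+-\vge V_-}^M\asymp k_{P_+}^M$ for all $|\vge|<\vge_0$. Since $V_-\geq 0$, Lemma~\ref{lem1} (applied with $P=P_+$ and nonnegative potential $V_-$) extends the equivalence to all $\vge<\vge_0$, in particular to $\vge=-1$; because $P_+-(-1)V_-=P_++V_-=P_0+V_+$, this gives $k_{P_0+V_+}^M\asymp k_{P_+}^M$. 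Since $V_+\geq 0$, part~(4) of Lemma~\ref{properties} provides $k_{P_0+V_+}^M\leq k_{P_0}^M$, and combining the two facts yields \eqref{Ass1n}.

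\textbf{Ratio limit \eqref{eqconjMain6}.} Fix $x,y\in M$ and set $h(t):=k_{P_+}^M(x,y,t)/k_{P_0}^M(x,y,t)\leq C$. Subcriticality of $P_+$ gives $\int_0^\infty k_{P_+}^M(x,y,t)\,\mathrm{d}t<\infty$, whereas criticality of $P_0$ gives $\int_0^\infty k_{P_0}^M(x,y,t)\,\mathrm{d}t=\infty$. A Chebyshev-type estimate
\[
\int_{\{h\geq\delta\}} k_{P_0}^M(x,y,t)\,\mathrm{d}t\leq \delta^{-1}\int_0^\infty k_{P_+}^M(x,y,t)\,\mathrm{d}t<\infty
\]
for every $\delta>0$ shows that $\{h\geq\delta\}$ cannot eventually contain a half-line $[T,\infty)$, so $\liminf_{t\to\infty} h(t)=0$. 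To upgrade this to $\lim h=0$, I combine the Davies-Harnack inequality (Lemma~\ref{harnack}), which on any compact $A\subset M$ bounds $\sup_{A\times A} h(\cdot,\cdot,t)$ by a constant multiple of $\inf_{A\times A} h(\cdot,\cdot,t)$ for $t\geq T_0$ (reducing the problem to a reference point $x_0$ and providing local uniformity), with the log-convexity of $t\mapsto k_P^M(x_0,x_0,t)$ (part~(6) of Lemma~\ref{properties}) and \eqref{large-eigen}, which jointly bound the oscillation of $h(x_0,x_0,\cdot)$ over unit-length intervals for $t$ large. Arguing by contradiction from $\limsup h>0$: oscillation-thickening turns a $\limsup$-realizing sequence into a union of intervals of positive length on which $h$ is bounded below, whence via the first-part bound $\int k_{P_+}^M$ inherits an infinite contribution, contradicting subcriticality.

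\textbf{Application and obstacle.} For $V\geq 0$ nonzero, $V_-=0$ makes the $3k$-inequality trivial, and $P_0+V$ is subcritical by the classical fact that a nonzero nonnegative perturbation of a critical operator is subcritical (the ground state of $P_0$ becomes a strict positive supersolution for $P_0+V$). The conclusions above then apply, verifying Conjecture~\ref{conjMain} for $P_+=P_0+V$. The principal technical obstacle is the upgrade from $\liminf=0$ to $\lim=0$: while the mechanism (Davies-Harnack for spatial uniformity plus log-convexity for temporal regularity) is morally transparent, one must carefully ensure that the interval union produced by oscillation-thickening carries enough $k_{P_0}^M$-mass to contradict the integrability of $k_{P_+}^M$, along the lines of \cite[Theorem~5.4]{FKP}.
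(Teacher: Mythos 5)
Your derivation of the bound \eqref{Ass1n} is exactly the paper's argument: the $3k$-inequality for $k_{P_+}^M$ with respect to $V_-$ gives, via Theorem~\ref{thm_3k_implies_eq} and Lemma~\ref{lem1} (with $\vge=-1$), the equivalence $k_{P_+}^M\asymp k_{P_++V_-}^M=k_{P_0+V_+}^M$, and then $k_{P_0+V_+}^M\leq k_{P_0}^M$ by Lemma~\ref{properties}(4). The treatment of the ``in particular'' clause (with $V_-=0$ the $3k$-inequality is vacuous and subcriticality of $P_0+V$ follows from the ground state of $P_0$ being a non-solution positive supersolution) is also fine and standard.

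The difference, and the gap, is in the ratio limit \eqref{eqconjMain6}. The paper does not prove this step at all: it deduces \eqref{eqconjMain6} from \eqref{Ass1n} by citing \cite[Theorem~3.1]{FKP}, which is precisely the statement that $k_{P_+}^M\leq Ck_{P_0}^M$ with $P_+$ subcritical and $P_0$ critical forces the ratio to vanish. Your attempt to reprove this is incomplete, and the incompleteness is not merely technical. The Chebyshev estimate shows that the superlevel set $\{h\geq\delta\}$ carries finite $k_{P_0}^M(x,y,\cdot)\,\mathrm{d}t$-mass, and Harnack plus log-convexity can thicken a $\limsup$-realizing sequence $t_n\to\infty$ into intervals $[t_n,t_n+c]$ on which $h\geq\delta/C'$. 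But the resulting contribution $\sum_n\int_{t_n}^{t_n+c}k_{P_0}^M(x,y,t)\dt$ need not diverge even though $\int_0^\infty k_{P_0}^M\dt=\infty$: for a critical kernel with, say, $k_{P_0}^M(x,x,t)\sim t^{-1}$ and a sparse sequence $t_n=2^n$, this sum converges, and no contradiction with the integrability of $k_{P_+}^M$ is obtained. So your argument only yields $\liminf_{t\to\infty}h(t)=0$, which is the trivial half already noted before Conjecture~\ref{conjMain}; the passage to $\lim h=0$ genuinely requires the finer structure exploited in \cite[Theorem~3.1]{FKP} (the complete monotonicity/spectral representation of $t\mapsto k_P^M(x,x,t)$ in the symmetric case), not just monotonicity, Harnack, and the integrability dichotomy. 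Either cite that theorem, as the paper does, or supply its proof.
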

\begin{proof}
By Theorem~\ref{thm_3k_implies_eq} and Lemma~\ref{lem1}, we have $k_{P_+}^M\asymp k_{P_+ + V_-}^M(x,y,t)$.
Note that  $P_+ + V_-=P_0 + V_+$. Therefore, we have
\begin{equation}\label{eq27}
C^{-1} k_{P_+}^M(x,y,t) \leq k_{P_0+V_+}^M(x,y,t)\leq k_{P_0}^M(x,y,t) \qquad \forall x,y\in M \mbox{ and } t>0.
\end{equation}
Using \cite[Theorem~3.1]{FKP}, we conclude that \eqref{eqconjMain6} holds true.
\end{proof}

\medskip

 \begin{center}{\bf Acknowledgments} \end{center}
The authors wish to thank Professor Baptiste Devyver and Professor Alexander Grigor'yan for valuable discussions. They acknowledge the support of the Israel Science Foundation (grants No. 970/15) founded by the Israel Academy of Sciences and Humanities. D.~G. was supported in part at the Technion by a fellowship of the Israel Council for Higher Education.


\begin{thebibliography}{99}
\bibitem{Ag}  S.~Agmon, On positive solutions of elliptic equations with periodic coefficients in $\mathbb{R}^d$, spectral results and
extensions to elliptic operators on Riemannian manifolds, in: Differential Equations (Birmingham, 1983), pp.~7--17,
North-Holland Math. Stud. 92, North-Holland, Amsterdam, 1984.

\bibitem {AA} A.~Ancona,  First eigenvalues and comparison of Green's functions for elliptic
operators on manifolds or domains, J.~Anal. Math. \textbf{72} (1997), 45--92.

\bibitem {Ancona} A.~Ancona,  Some results and examples about the behaviour of harmonic functions and Green's functions with respect to second order elliptic operators, Nagoya Math. J. \textbf{165} (2002), 123--158.

\bibitem{AR} D.~G.~Aronson, Non-negative solutions of linear parabolic equations, Ann. Scuola
Norm. Sup. Pisa. Cl. Sci. (4),  \textbf{22} (1968), 607--694. Addendum \textbf{25} (1971), 221--228.

\bibitem{BP} M.~T.~Barlow, and E.~A.~Perkins,  Brownian motion on the Sierpi\'{n}ski gasket, Probab. Theory Related Fields \textbf{ 79} (1988),  543--623.

\bibitem{BER} M.~Berger, P.~Gauduchon, and E.~Mazet, Le Spectre d'une Vari\'et\'e Riemannienne, Lecture Notes in Math. 194, Springer-Verlag, Berlin, 1971.

\bibitem{BDS} K.~Bogdan, J.~Dziuba\'{n}ski, and K.~Szczpkowski,  Sharp Gaussian estimates for heat kernels of Schr\"odinger operators, preprint, 2016. arXiv: 1706.06172.

\bibitem{BS} K.~Bogdan, and K.~Szczypkowski, Gaussian estimates for Schr\"odinger perturbations, Studia Math. \textbf{221} (2) (2014), 151--173.

\bibitem{CHA} I.~Chavel, Eigenvalues in Riemannian Geometry, Academic press, Cambridge, 1984.

\bibitem{CA} T.~Coulhon,  and A.~Grigor'yan, On-diagonal lower bound for heat kernels and Markov chains, Duke Math. J. \textbf{89} (1997), 133--199.

\bibitem{CH} X.~Chen, and A.~Hassell, The heat kernel on asymptotically hyperbolic manifolds,  arXiv:1612.06044v2.

\bibitem{DA} E.~B.~Davies, Heat Kernels and Spectral Theory, Cambridge University Press, Cambridge, 1989.

\bibitem{DA1} E.~B.~Davies, Non-Gaussian aspects of heat kernel behaviour, J.~London Math. Soc.
(2) \textbf{55} (1997), 105--125.

\bibitem{JLD} J.~L.~Doob, Classical Potential Theory and its Probabilistic Counterpart, Springer-Verlag, New York, 1984.

\bibitem{FKP} M.~Fraas, D.~Krej\v{c}i\v{r}\'{\i}k, and Y.~Pinchover, On some strong ratio limit theorems for heat kernels, Discrete Contin. Dyn. Syst. Ser. A, a special volume dedicated to Louis Nirenberg on the occasion of his 85th birthday, \textbf{28} (2010), 495--509.

\bibitem{greg0} A.~Grigor'yan, Heat kernels on weighted manifolds and applications, in: The Ubiquitous Heat Kernel, 93--191,
Contemp. Math., \textbf{398}, Amer. Math. Soc., Providence, RI, 2006.

\bibitem{greg1} A.~Grigor'yan, Heat Kernel and Analysis on Manifolds, AMS/IP Studies in Advanced Mathematics, 47. Amer. Math. Soc., Providence, RI; International Press, Boston, MA, 2009.

\bibitem{greg} A.~Grigor'yan, and A.~Telcs,  Two sided estimates of heat kernel on metric measure space, Ann. Probab. \textbf{40}  (2012), 1212--1284.

\bibitem{GKP} G.~Grillo, H.~Kova\v{r}\'{\i}k, and Y.~Pinchover, Sharp two-sided heat kernel estimates of twisted tubes and applications,
Arch. Rational Mech. Anal. {\bf 213} (2014), 215--243.

\bibitem{SC1} P.~Gyrya, and L.~Saloff-Coste,   Neumann and Dirichlet heat kernels in inner uniform domains,
Ast\'erisque \textbf{336},  (2011).

\bibitem{YK}  Y.~Kannai  Off diagonal short time asymptotics for
fundamental solutions of diffusion equations, Commun. Partial Differ. Equations  \textbf{2} (1977), 781--830.

\bibitem{LY} P.~Li, and S.~T.~Yau,  On the parabolic kernel of the Schr\"odinger operator, Acta Math.  \textbf{156} (1986), 153--201.

\bibitem{LS} V.~Liskevich,  and Y.~Semenov,  Two-sided estimates of the heat kernel of the
Schr\"odinger operator, Bull. London Math. Soc. \textbf{30} (1998), 596--602.

\bibitem{LL} F.~Ledrappier and S.~Lim, Local limit theorem in negative curvature, arXiv:1503.04156.
%
\bibitem{M-H_M} P.~J.~Mendez-Hernandez, and M.~Murata, Semismall perturbations, semi-intrinsic ultracontractivity, and integral representations of nonnegative solutions for parabolic equations, J.~Funct. Anal. \textbf{257} (2009), 1799--1827.

\bibitem{MS} P.~D. Milman, and  Yu.~A.~Semenov,  Heat kernel bounds and desingularizing
weights, J. Funct. Anal. \textbf{202} (2003), 1--24.

\bibitem{MP} S.~Minakshisundaram, and {\AA}.~Pleijel, Some properties of the eigenfunctions of the Laplace-operator on Riemannian manifolds, Canadian J. Math. {\bf 1} (1949). 242--256.

\bibitem{MM1} M.~Murata,  Semismall perturbations in the Martin theory for elliptic equations,
Israel J.~Math. \textbf{102} (1997), 29--60.

\bibitem{MM2} M.~Murata,  Integral representations of nonnegative solutions for parabolic equations and elliptic Martin boundaries,
J. Funct. Anal.  \textbf{245} (2007), 177--212.

\bibitem{JN} J.~Nash, Continuity of solutions of parabolic and elliptic equations, Amer. J. Math. \textbf{80} (1958), 931--954.

\bibitem{Ouhabaz} E.~M.~Ouhabaz, Analysis of Heat Equations on Domains, London Mathematical Society Monographs Series, 31, Princeton University Press, Princeton, NJ, 2005.

\bibitem{YP3} Y.~Pinchover,  On positive solutions of second order elliptic equations, stability results and classification,  Duke Math J. \textbf{57} (1988), 955--980.

\bibitem{YP1} Y.~Pinchover, Criticality and ground states for second-order elliptic equations,
J.~Differential Equations \textbf{80} (1989), 237--250.

\bibitem{YP2} Y.~Pinchover, On criticality and ground states of second order elliptic equations. II,
J. Differential Equations \textbf{87} (1990), 353--364.

\bibitem{YP99}  Y.~Pinchover, Maximum and anti-maximum principles and eigenfunctions estimates via perturbation theory of positive solutions of elliptic equations, Math. Ann. \textbf{314} (1999), 555--590.

\bibitem{YP4} Y.~Pinchover, Large time behavior of the heat kernel, J.~Functional Analysis \textbf{206} (2004), 191--209.

\bibitem{pinch2} Y.~Pinchover, Some aspects of large time behavior of the heat kernel: an overview with perspectives, in: Mathematical Physics, Spectral Theory and Stochastic Analysis, eds. M.~Demuth and W.~Kirsch, pp. 299--339. Operator Theory: Advances and Applications, Vol. 232, Springer Verlag, Basel, 2013.

\bibitem{RF} H.~L.~Royden, and P.~M.~Fitzpatrick, Real Analysis, Prentice Hall, Boston, 2010.


\bibitem{SC} L.~Saloff-Coste, The heat kernel and its estimates, in: Probabilistic Approach to Geometry, pp. 405--436,
Adv. Stud. Pure Math., 57, Math. Soc. Japan, Tokyo, 2010.

\bibitem{S82} B.~Simon, Schr\"odinger semigroups, Bull. Amer. Math. Soc. \textbf{7} (1982), 447--526.

\bibitem{TIN} K.~Tintarev, Short time asymptotics for fundamental solutions of higher order parabolic equations,
Commun. Partial Differ. Equations \textbf{7} (1982), 371--39.

\bibitem{Z1} Q.~S.~Zhang, Gaussian bounds for the fundamental solutions of  $\nabla(A\nabla u) + B \nabla u - u_{t} = 0 $, Manuscripta Math. \textbf{93} (1997), 381--390.

\bibitem{Z2} Q.~S.~Zhang,  A sharp comparison result concerning Schr\"odinger heat kernels, Bull.
London Math. Soc., \textbf{35} (2003), 461--472.
%
\end{thebibliography}
\end{document}